\documentclass{amsart}

\usepackage{amsfonts,amsmath,amssymb}
\usepackage{latexsym,amscd,graphicx}


\newcommand{\bg}{{\overline{g}}}

\newcommand{\bd}{{\overline{d}}}
\newcommand{\bz}{{\overline{z}}}

\newcommand{\bx}{{\overline{x}}}
\newcommand{\by}{{\overline{y}}}
\newcommand{\bch}{{\overline{\chi}}}

\newcommand{\wx}{{\widetilde{x}}}
\newcommand{\wy}{{\widetilde{y}}}

\newcommand{\bS}{{\overline{S}}}

\newcommand{\wh}[1]{\widehat{#1}}
\newcommand{\wt}[1]{\widetilde{#1}}

\newcommand{\ve}{\varepsilon}
\newcommand{\vp}{\varphi}
\newcommand{\ld}{\ldots}

\DeclareMathOperator*{\ot}{\otimes}
\DeclareMathOperator*{\op}{\oplus}

\newcommand{\beq}{\begin{equation}}
\newcommand{\eeq}{\end{equation}}
\newcommand{\beas}{\begin{eqnarray*}}
\newcommand{\eeas}{\end{eqnarray*}}
\newcommand{\id}{\mathrm{id}}



\newcommand{\cC}{\mathcal{C}}
\newcommand{\cD}{\mathcal{D}}
\newcommand{\cE}{\mathcal{E}}

\newcommand{\cP}{\mathcal{P}}

\newcommand{\R}{\mathbb{R}}
\newcommand{\C}{\mathbb{C}}
\renewcommand{\H}{\mathbb{H}}
\newcommand{\zz}{\mathbb{Z}_2\times\mathbb{Z}_2 }
\newcommand{\z}{\mathbb{Z}_2}

\newcommand{\ZZ}{\mathbb{Z}}
\newcommand{\bZ}{{\mathbb Z}}

\newcommand{\FF}{\mathbb{F}}




\DeclareMathOperator{\End}{\mathrm{End}}

\DeclareMathOperator{\alg}{\mathrm{alg}}

\DeclareMathOperator{\Ker}{\mathrm{Ker}\,}
\DeclareMathOperator{\rank}{\mathrm{rank}\,}
\DeclareMathOperator{\Aut}{\mathrm{Aut}}

\DeclareMathOperator{\Supp}{\mathrm{Supp}}








\newcommand{\GL}{\mathrm{GL}}

\newtheorem{theorem}{Theorem}[section]

\newtheorem{lemma}[theorem]{Lemma}

\newtheorem{proposition}[theorem]{Proposition}

\newtheorem{remark}[theorem]{Remark}





\begin{document}

\title{Graded Division Algebras over the Field of Real Numbers}

\author[Bahturin]{Yuri Bahturin}
\address{Department of Mathematics and Statistics, Memorial
University of Newfoundland, St. John's, NL, A1C5S7, Canada}
\email{bahturin@mun.ca}

\author[Zaicev]{Mikhail Zaicev}
\address{Faculty of Mathematics and Mechanics, Moscow State University, Russia}
\email{zaicevmv@mail.ru}

\thanks{{\em Keywords:} graded algebras, division algebras, algebras given by generators and defining relations}
\thanks{{\em 2010 Mathematics Subject Classification:} Primary 16W50, Secondary 16K20, 16K50.}
\thanks{The first author acknowledges support by by NSERC grant \# 227060-14. The second author acknowledges support by Russian Science Foundation grant \# 16-11-10013}

\begin{abstract}
We give a full classification, up to equivalence, of finite-dimensional graded division algebras over the field of real numbers. The grading group is any abelian group. 
\end{abstract}

\maketitle

\section{Introduction} In this paper we will deal only with finite-dimensional algebras over a field $F$, which will be either the field $\R$ of real numbers or the field $\C$ of complex numbers. A unital algebra $R$ over a field $F$ graded by a group $G$ is called \textit{graded division} if every nonzero homogeneous element is invertible. Each such algebra is graded simple, that is, $R$ has no proper nonzero graded ideals. As an ungraded algebra, a graded division algebra does not need to be simple, as shown by the basic example of the group algebra $FG$. But it is known (see, e.g. \cite{BZ11}) that graded division algebras are semisimple, that is, isomorphic to the sum of one or more simple algebras.  According to the graded analogues of Schur's Lemma and Density Theorem (see, for example, \cite{NVO} or \cite{BMPZ} or \cite{EK}) any finite-dimensional graded simple algebra is isomorphic to the algebra $\End_DV$ of endomorphisms of a finite-dimensional graded (right) vector space over a graded division algebra $D$. If, additionally, $R$ is simple, it is obvious that $D$ must be simple, as well.

In the case where the field $F$ is algebraically closed, all simple graded division algebras have been described in \cite{BSZ} and \cite{BZ02}. For full account see \cite[Chapter 1]{EK}, where the authors treat also the case of Artinian algebras. In \cite{BBZ} (see also \cite{BZ10}, for a particular case) the authors treat the case of graded primitive algebras with minimal one-sided graded ideals. If such algebras are locally finite, the graded division algebras arising by graded Schur's Lemma, are finite-dimensional and so the description provided in the case of finite-dimensional algebras works in this situation, as well.

In our recent paper \cite{BZGDS} we have classified division gradings on simple real finite-dimensional algebras, up to equivalence. In \cite{ARE} the author provided another approach to  the classification of division gradings on these algebras, also up to isomorphism. 

In the present paper we classify all finite-dimensional real graded division algebras. This is done case by case, depending on various factors. In Theorem \ref{Pauli} we deal with algebras endowed with so called Pauli or Sylvester gradings. They come from complex graded division algebras, by restriction of the scalars to the field of real numbers. Theorem \ref{tcca} classifies,  up to equivalence, all finite-dimensional real commutative graded division algebras. In Theorem \ref{t_2_1_m} we classify, up to equivalence, all finite-dimensional real noncommutative graded division algebras with 1-dimensional homogeneous components. In Theorem \ref{teq_2_dim} we classify,  up to equivalence, all finite-dimensional real graded division algebras whose homogeneous components are 2-dimensional, as in the case of Pauli gradings, but the identity component is not central.  Finally,  in Theorem \ref{tMain} we do the same in the case of  algebras with  4-dimensional components. This covers all possible cases.

Our proofs in this paper are not an extension of the arguments in \cite{BZGDS} but a somewhat different argument, which can simplify the approach taken in  \cite{BZGDS}. 

A relevant paper is \cite{ABFP}, where the authors describe not necessarily associative graded simple algebras, under certain additional conditions.

\section{Preliminaries}\label{sPre}

A vector space decomposition $\Gamma: V=\bigoplus_{g\in G}V_g$ is called a grading of a vector space $V$ over a field $F$ by a set $G$. The subset $S$ of all $s\in G$ such that $V_s\neq \{ 0\}$ is called the \textit{support} of $\Gamma$ and is denoted by $\Supp\Gamma$ (also as $\Supp V$, if $V$ is endowed just by one grading). If $\Gamma': V'=\bigoplus_{g'\in G'}V'_{g'}$ is a grading of another space, then a homomorphism of gradings $\vp:\Gamma\to\Gamma'$ is a linear map $f:V\to V'$ such that for each element $g\in G$ there exists an element $g'\in G'$ such that $\vp(V_g)\subset V_{g'}$. If $\vp$ has an inverse as a homomorphism of gradings, then we say that $\vp:\Gamma\to\Gamma'$  is an \textit{equivalence} of gradings $\Gamma$ and $\Gamma'$ (or graded vector spaces $V$ and $V'$).

A grading $\Gamma: R=\bigoplus_{g\in G}R_g$ of an algebra $R$ over a field $F$ is an \textit{algebra} grading if for any $s_1,s_2\in S=\Supp\Gamma$ such that $R_{s_1}R_{s_2}\ne \{0\}$ there is $s_3\in G$ such that $R_{s_1}R_{s_2}\subset R_{s_3}$. Two algebra gradings $\Gamma: R=\bigoplus_{g\in G}R_g$ and $\Gamma': R'=\bigoplus_{g'\in G'}R'_{g'}$ of algebras over a field $\FF$ are called \textit{equivalent} if there exist an algebra isomorphism $\vp:R\to R'$, which is an equivalence of vector space gradings. In this case there is a bijection $\alpha:\Supp\Gamma\to\Supp\Gamma'$ such that $f(R_g)=R'_{\alpha(g)}$. The group given by the set of generators $S$ and defining relations $s_1s_2=s_3$ provided that $\{ 0\}\ne R_{s_1}R_{s_2}\subset R_{s_3}$, is called the \textit{universal group} of the grading $\Gamma$ and denoted by $U(\Gamma)$.

If $G$ is a group then a grading $\Gamma: R=\bigoplus_{g\in G}R_g$ of an algebra $R$ over a field $F$ is called a \textit{group grading} if for any $g,h\in G$, we have $R_gR_h\subset R_{gh}$. Normally, it is assumed that the grading group $G$ is generated by $\Supp\Gamma$. If $\vp:\Gamma\to \Gamma':R\to R'$ is an equivalence of gradings of algebras $R$ and $R'$ by groups $G$ and $G'$ and the accompanying bijection $\alpha:\Supp\Gamma\to\Supp\Gamma'$ comes from an isomorphism of groups $\alpha:G\to G'$ then we call $\vp$ a \textit{weak isomorphism} and say that $\Gamma$ and $\Gamma'$ (also $R$ and $R'$) are \textit{weakly isomorphic}. Finally, if $G=G'$ and $\alpha=\id_G$ then $\Gamma$ and $\Gamma'$ are called \textit{ isomorphic}.

Note that if, say, $\Gamma$ is a \textit{strong grading}, that is, $R_gR_h=R_{gh}$, for any $g,h\in G$, then $\Gamma$ and $\Gamma'$ are equivalent if and only if they are weakly isomorphic.

If $\Gamma: R=\bigoplus_{s\in S}R_s$ and $\Gamma': R=\bigoplus_{s'\in S'}R_{s'}^\prime$ are two gradings of the same algebra labeled by the sets $S$ and $S'$ then we say that $\Gamma$ is a \textit{refinement} of $\Gamma'$ if for any $s\in S$ there is $s'\in S'$ such that $R_s\subset R_{s'}$. We also say that $\Gamma'$ is a \textit{coarsening} of $\Gamma$. The refinement $\Gamma$ is proper if for at least one $s$ the containment $R_s\subset R_{s'}$ is proper. A grading which does not admit proper refinements is called \textit{fine}. Assume $\Gamma,\Gamma'$ are group gradings, so that $G'=G/T$. If $R^\prime_{\bg}=\bigoplus_{g\in\bg}R_g$, for all $\bg\in G'$, then $\Gamma'$ is a coarsening of $\Gamma$ called \textit{factor-grading}. In the case of complex gradings all division gradings are fine, while in the case of real numbers this is no longer true (see examples in Section \ref{ssgda} below).

\subsection{Tensor products of division gradings}\label{sstp}

Given groups $G_1, G_2,\ld,G_m$ and $G_k$-graded algebras $R_1, R_2,\ld, R_m$, $k=1,\ld,m$, one can endow the tensor product of algebras $R=R_1\ot R_2\ot\cdots\ot R_m$ by a $G=G_1\times G_2\times\cdots\times G_m$-grading, called the \textit{tensor product of gradings} if one sets
\begin{displaymath}
R_{(g_1,g_2,\ld,g_m)}=(R_1)_{g_1}\ot (R_2)_{g_2}\ot\cdots\ot (R_m)_{g_m}.
\end{displaymath}
Here $g_k\in G_k$, for all $k=1,2,\ldots,m$.

In the case of division algebras over an algebraically closed field, the tensor product of two graded division algebras is a graded division algebra. This is no longer true in our case. Indeed, $D_1\otimes D_2$, where $D_i=\R,\C,\H$, is a division algebra if and only if at least one of $D_i$ is $\R$. If $R$ is a $G$-graded division algebra, $S$ an $H$-graded division algebra, $(R\otimes S) _{(g,h)}=R_g\ot S_h$, for all $(g,h)\in G\times H$. Clearly, all nonzero elements in each homogeneous component are invertible if this is true for the identity component. As a result, the tensor product of two division gradings is a division grading only if the identity component of at least one of them is one-dimensional.

\subsection{Basic properties of division gradings}\label{ssBP}
We start with fixing few well-known useful properties (see e.g. \cite[Chapter 2]{EK}).
\begin{lemma}\label{lsf} Let $\Gamma: R=\bigoplus_{g\in G} R_g$ be a grading by a group $G$ on an (associative) algebra $R$ over a field $F$.  If $\Gamma$ is a division grading then the following hold.
\begin{enumerate}
\item The identity component $R_e$ of $\Gamma$ is a division algebra over $F$;
\item Given $g\in G$ and a nonzero $a\in R_g$, we have $R_g=aR_e$;
\item For  any $g\in G$, $\dim R_g=\dim R_e$ and $\dim R=|\Supp\Gamma|\dim R_e$;
\item $\Supp(\Gamma)$ is a subgroup of $G$ isomorphic to the universal group $U(\Gamma)$.\hfill$\square$
\end{enumerate}
\end{lemma}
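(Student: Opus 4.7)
The strategy is to derive all four parts from a single homogeneity observation: in a graded division algebra, the inverse of a nonzero homogeneous element $a\in R_g$ automatically lies in $R_{g^{-1}}$. Once this is in hand, parts (1)--(3) are essentially bookkeeping, and (4) reduces to the observation that in a division grading the presentation of $U(\Gamma)$ collapses to the Cayley table of $\Supp\Gamma$ viewed as a subgroup of $G$.

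To start, I would first record that the unit of $R$ is homogeneous of degree $e$: decomposing $1=\sum_{g} a_g$ and multiplying on the right by an arbitrary homogeneous element $b\in R_h$ forces $a_g b=0$ for $g\ne e$, hence $1=a_e\in R_e$. Now for a nonzero $a\in R_g$ with two-sided inverse $a^{-1}=\sum_{h} b_h$, the identity $1=aa^{-1}$ together with $ab_h\in R_{gh}$ forces $b_h=0$ whenever $h\ne g^{-1}$, so $a^{-1}\in R_{g^{-1}}$. Part (1) is the case $g=e$ of this statement. For (2), given a nonzero $a\in R_g$, the multiplication map $\mu_a\colon R_e\to R_g$, $r\mapsto ar$, is injective (as $a$ is a unit in $R$) and surjective (for $b\in R_g$ the element $a^{-1}b$ lies in $R_{g^{-1}}R_g\subset R_e$ and satisfies $a(a^{-1}b)=b$), so $R_g=aR_e$. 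Part (3) then follows at once since $\mu_a$ is an $F$-linear bijection and $R=\bigoplus_g R_g$ by assumption.

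For (4), I would first verify that $T:=\Supp\Gamma$ is a subgroup of $G$. It contains $e$ since $1\in R_e$; closure under inversion is immediate from the homogeneity of $a^{-1}$ shown above; and closure under multiplication follows because for nonzero $a\in R_g$, $b\in R_h$ the product $ab\in R_{gh}$ is nonzero, being the product of two units in $R$. In particular the grading is strong on its support: $R_g R_h=R_{gh}$ for all $g,h\in T$.

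The only step that calls for a little care, and which I see as the main (mild) obstacle, is the identification $U(\Gamma)\cong T$. The key point is that in a division grading one has $\{0\}\ne R_{s_1}R_{s_2}\subset R_{s_1 s_2}$ for every pair $s_1,s_2\in T$, so the defining relations of $U(\Gamma)$ are precisely $x_{s_1} x_{s_2}=x_{s_1 s_2}$ for all $(s_1,s_2)\in T\times T$. This is exactly a Cayley-table presentation of $T$, so the maps $U(\Gamma)\to T$, $x_s\mapsto s$, and $T\to U(\Gamma)$, $s\mapsto x_s$, are well-defined homomorphisms and are mutually inverse. The only genuine subtlety here is to keep the generator $x_s$ of $U(\Gamma)$ notationally distinct from the element $s$ of $T\subset G$, in order to recognize that the presentation is indeed the Cayley table of $T$ and nothing more.
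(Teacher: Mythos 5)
Your proof is correct. The paper itself offers no proof of this lemma --- it is recorded as a list of well-known facts with a pointer to \cite[Chapter 2]{EK} --- and your argument (first the homogeneity of $1$ and of inverses of homogeneous elements, then the bijections $r\mapsto ar$ for parts (2)--(3), and finally the observation that for a division grading the defining relations of $U(\Gamma)$ are exactly the Cayley table of $\Supp\Gamma$) is precisely the standard one; the only reading convention to keep in mind is that in part (3) the equality $\dim R_g=\dim R_e$ is of course meant for $g\in\Supp\Gamma$, which is how your bijection argument treats it.
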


Since our base field is $\R$, it follows that $R_e$ is one of $\R$, $\C$, or $\H$,  the division algebra of quaternions.

As mentioned above, the support of the division grading is a subgroup in the grading group. This makes it natural to always assume that the support of $R$ equals the whole of $G$.  Thus, when we speak about gradings on finite-dimensional division graded algebras, we may assume that the grading group $G$ is finite.

One notational remark. Given an element $g$ of order $n$ in a group $G$, we denote by $(g)_n$ the cyclic subgroup generated by $g$. Given vectors $v_1,\ld,v_m$ in a real vector space $V$, we denote by $\langle v_1,\ld,v_m\rangle$ the linear span of $v_1,\ld,v_m$, with coefficients in $\R$. To avoid confusion with number $1\in\R$, we will denote the identity element of a graded division algebra $R$ by $I$.

\section{Simple graded division algebras}\label{ssgda}

We recall some low dimensional simple graded division algebras and the classification of simple graded division algebras, up to equivalence (see \cite{BZGDS} and \cite{ARE}).

The simplest examples of graded division algebras are $\R$, $\C$ and the quaternions $\H$. These are graded by the trivial group. Given any group $G$, the group algebras $\R G$, $\C G$ and $\H G=\H\ot\R G$ are further examples of real graded division algebras, of dimensions $|G|$, $2|G|$ and $4|G|$, respectively. Also, we can refine the trivial gradings on  $\C$  and $\H$ to obtain the $\z$-gradings  $\C^{(2)}$ and $\H^{(2)},$ as follows. If we set $\z\cong(\alpha)_2$, then
\begin{equation}\label{eC2H2}
\C^{(2)}=\langle 1\rangle_e\oplus \langle i\rangle_\alpha,\; \H^{(2)}=\langle 1,i\rangle_e\oplus \langle j,k\rangle_\alpha.
\end{equation}
Also, if we set $\zz\cong(\alpha)_2\times (\beta)_2$ then a $\zz$-refinement, $\H^{(4)}$, on $\H$ will look like the following:
\begin{equation}\label{eH4}
\H^{(4)}=\langle 1\rangle_e\oplus \langle i\rangle_\alpha\oplus\langle j\rangle_\beta\oplus \langle k\rangle_{\alpha\beta}.
\end{equation}
The matrix algebra $M_2=M_2(\R)$, with trivial grading, is not a graded division algebra, but it has $\z$- and $\z^2$-refinements $M_2^{(2)}$ and $M_2^{(4)}$, which are graded division algebras. Recall the Pauli (Sylvester) matrices
\begin{equation}\label{eE1}
A=\left(\begin{array}{cc} 1 &0\\0&-1\end{array}\right)\; B=\left(\begin{array}{cc} 0 &1\\1&0\end{array}\right)\;C=\left(\begin{array}{cc} 0 &1\\-1&0\end{array}\right).
\end{equation}
Then $A^2=B^2=I$, $AB=-BA=C$. It follows then that $C^2=-I$, $AC=-CA=B$, $BC=-CB=-A$. So we have the following
\begin{equation}\label{eM2M4}
M_2^{(2)}=\langle I,C\rangle_e\oplus \langle A,B\rangle_\alpha,\;M_2^{(4)}=\langle I\rangle_e\oplus \langle C\rangle_\alpha\oplus\langle A\rangle_\beta\oplus \langle B\rangle_{\alpha\beta}.
\end{equation}

The only 8-dimensional real simple algebra is $R=M_2(\C)$. Since $M_2(\R)\ot\C\cong M_2(\C)\cong \H\ot\C$, we can obtain division gradings on $R$ in various ways. A division $\z$-grading can be obtained as $ \H\ot\C^{(2)}$. A division $\z^2$-grading on $R$  can be obtained as  $ \H^{(4)}\ot\C$, $M_2^{(4)}\ot\C$, $\H^{(2)}\ot\C^{(2)}$ and $M_2^{(2)}\ot\C^{(2)}$. It is known from \cite{BZGDS} that the first two gradings are equivalent, and also the last two are equivalent. A division $\z^3$-grading on $R$ can be obtained in two equivalent ways: $ \H^{(4)}\ot\C^{(2)}$ or $M_2^{(4)}\ot\C^{(2)}$. Notice that the natural isomorphism $\vp:\H\ot\C\to M_2(\R)\ot\C$ defined by $\vp(i\ot z)=A\ot iz$, $\vp(j\ot z)=B\ot iz$, for any $z\in\C$, induces a graded isomorphisms for the refinements $\vp:H^{(2)}\ot\C^{(2)}\to
M_2^{(2)}\ot\C^{(2)}$ and $\vp:H^{(4)}\ot\C^{(2)}\to
M_2^{(4)}\ot\C^{(2)}$.

So far, all division gradings on $M_2(\C)$ have appeared as tensor product gradings. However, there are gradings on this algebra, which are not tensor products. Let us fix a complex number $\omega$ such that $\omega^2=i$. Then a $\bZ_4$-grading on  $M_2(\C)$, denoted by $M_2(\C,\bZ_4)$ can be obtained, as follows. We set $\bZ_4\cong(\gamma)_4$. Then

\begin{displaymath}
M_2(\C,\bZ_4)=\langle I,C\rangle_e\oplus \langle \omega A, \omega B\rangle_\gamma\oplus \langle iI,iC\rangle_{\gamma^2}\oplus\langle \omega^3 A, \omega^3 B\rangle_{\gamma^3}.
\end{displaymath}

A $\z\times \bZ_4$-refinement of $M_2(\C,\bZ_4)$ is denoted by $M_2^{(8)}$.We set $\z\times \bZ_4\cong(\alpha)_2\times(\gamma)_4$. Then

\begin{displaymath}
M_2^{(8)}=\langle I\rangle_e\oplus \langle C\rangle_\alpha\oplus \langle \omega A\rangle_\gamma\oplus \langle \omega B\rangle_{\alpha\gamma}\oplus \langle iI\rangle_{\gamma^2}\oplus \langle iC\rangle_{\alpha\gamma^2}\oplus\langle \omega^3 A\rangle_{\gamma^3}\oplus \langle \omega^3 B\rangle_{\alpha\gamma^3}.
\end{displaymath}

We know that $\H\ot\H\cong M_2(\R)\ot M_2(\R)$. This isomorphism transfers the structure of graded division algebra from $\H^{(4)}\ot\H$ to $M_4(\R)\cong M_2(\R)\ot M_2(\R)$. This grading of $M_4(\R)$ is not a tensor product of gradings on the tensor factors $M_2(\R)$. Thus $R=M_4(\R)$ acquires a division $\zz\cong(\alpha)_2\times(\beta)_2$-grading $M_4^{(4)}$ whose components are as follows:
\begin{eqnarray}\label{eq12}
R_e&=&\langle I\ot I, C\ot I,A\ot C,B\ot C\rangle,\\R_\alpha&=&(I\ot C)R_e,\;R_\beta=(C\ot A)R_e,\,R_{\alpha\beta}=(C\ot B)R_e,\nonumber
\end{eqnarray}

Finally, given a nonsingular alternating bicharacter $\beta: G\times G\to\C$, $|G|=n^2$, we denote by $\cP(\beta)$ the unique, up to equivalence, fine grading on $M_n(\C)$ defined by $\beta$ (see the details in Section \ref{scxc} below). Now let us denote by $\cP(\beta)_\R$ the same grading, viewed as a grading of an algebra over $\R$. We call $\cP(\beta)_\R$ a \textit{Pauli grading}.

\begin{theorem}\label{tgdar} Any division grading on a real simple algebra $M_n(D)$, $D$ a real division algebra, is  equivalent to one of the following types
\begin{enumerate}
\item[$D=\R:$]
\begin{enumerate}
\item[\rm (i)] $(M_2^{(4)})^{\ot k}$;
\item[\rm (ii)] $M_2^{(2)}\otimes (M_2^{(4)})^{\ot (k-1)}$,  a coarsening of {\rm (i)};
\item[\rm (iii)]$M_4^{(4)}\ot(M_2^{(4)})^{\ot (k-2)}$,  a coarsening of {\rm (i)};
\end{enumerate}
\item[$D=\H:$]
\begin{enumerate}
\item[\rm (iv)] $\H^{(4)}\ot(M_2^{(4)})^{\ot k}$;
\item[\rm (v)] $\H^{(2)}\otimes (M_2^{(4)})^{\ot k}$ a coarsening of {\rm (iv)};
\item[\rm (vi)] $\H\ot(M_2^{(4)})^{\ot k}$, a coarsening of {\rm (v)};
\end{enumerate}
\item[$D=\C:$]
\begin{enumerate}
\item[\rm (vii)] $\C^{(2)}\ot(M_2^{(4)})^{\ot k}$,
\item[\rm (viii)] $\C^{(2)}\ot M_2^{(2)}\ot(M_2^{(4)})^{\ot (k-1)}$, a coarsening of {\rm (vii)};
\item[\rm (ix)] $\C^{(2)}\ot\H\ot (M_2^{(4)})^{\ot (k-1)}$, a coarsening of {\rm (vii)};
\item[\rm (x)] $ M_2^{(8)}\ot(M_2^{(4)})^{\ot (k-1)}$;
\item[\rm (xi)] $ M_2(\C,\ZZ_4)\ot(M_2^{(4)})^{\ot (k-1)}$, a coarsening of {\rm (x)};
\item[\rm (xii)] $M_2^{(8)}\ot M_2^{(2)}\ot (M_2^{(4)})^{\ot (k-2)}$, a coarsening of {\rm (x)};
\item[\rm (xiii)] $M_2^{(8)} \ot\H\ot (M_2^{(4)})^{\ot (k-2)}$, a coarsening of {\rm (x)};
\item[\rm (xiv)] Pauli gradings.
\end{enumerate}
\end{enumerate}
None of the gradings of different types or of the same type but with different values of $k$ is equivalent to the other.
\end{theorem}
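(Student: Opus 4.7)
My plan is to attack Theorem \ref{tgdar} in three stages: (a) verify that each of the fourteen listed types really is a division grading on a real simple algebra $M_n(D)$, (b) show that any such grading is equivalent to one on the list, and (c) prove the items are pairwise non-equivalent. Throughout, I would work with the support $T = \Supp\Gamma$, which is a finite abelian subgroup of $G$ by Lemma \ref{lsf}, together with a chosen nonzero homogeneous element $X_g \in R_g$ for each $g \in T$. The two pieces of data I would track are the commutation bicharacter defined by $X_g X_h = \beta(g,h) X_h X_g$ and the power data $X_g^{\mathrm{ord}(g)} \in R_e$; modulo the ambiguity in the choice of $X_g$, these invariants determine the grading up to equivalence.

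For stage (a), I would apply the observation at the end of Section \ref{sstp}: a tensor product of two division gradings stays a division grading provided at least one identity component is one-dimensional. In each listed item at most one tensor factor has $R_e \ne \R$, so the construction is legitimate, and the underlying ungraded isomorphism with $M_n(D)$ follows from standard identifications such as $\H \ot \H \cong M_4(\R)$, $M_2(\R) \ot \C \cong M_2(\C)$, and $\H \ot \C \cong M_2(\C)$. For stage (c), I would compute invariants on each item: the isomorphism type of $R_e$ separates the three blocks $D = \R, \H, \C$; the abstract group $T$ and its exponent separate the items containing $\ZZ_4$ or $\ZZ_8$ factors from the elementary abelian ones; within a fixed $T$, a signature-like invariant coming from $g \mapsto X_g^2 \in R_e$ on the $2$-torsion of $T$ separates, for example, $\H^{(4)}$ from $M_2^{(4)}$; and the parameter $k$ is read off from $\dim R$.

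Stage (b) is the real work. Given $\Gamma$ on $R = M_n(D)$, I first fix $R_e \in \{\R, \H, \C\}$, which pins down the family. Next I search for a hyperbolic pair: elements $g,h \in T$ of order $2$ with $X_g^2 = X_h^2 = I$ and $\beta(g,h) = -1$. The graded subalgebra generated by $X_g,X_h$ is then equivalent to $M_2^{(4)}$, and by the graded double centralizer theorem the graded centralizer is again a graded division algebra on $M_{n/2}(D)$, so the grading splits as a tensor product with one $M_2^{(4)}$-factor peeled off. Iterating removes $M_2^{(4)}$-factors one at a time. The residual grading, which carries no such hyperbolic pair, must then be one of the small "atomic" pieces heading each family: $\R$, $M_2^{(2)}$, or $M_4^{(4)}$ when $D = \R$; $\H$, $\H^{(2)}$, or $\H^{(4)}$ when $D = \H$; and $\C^{(2)}$, $\C^{(2)} \ot M_2^{(2)}$, $\C^{(2)} \ot \H$, $M_2(\C,\ZZ_4)$, $M_2^{(8)}$, or a Pauli grading viewed over $\R$ when $D = \C$. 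The preliminary theorems of the paper (Theorems \ref{Pauli}, \ref{tcca}, \ref{t_2_1_m}, \ref{teq_2_dim}, \ref{tMain}) then identify these residuals.

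I expect the main obstacle to be handling the \emph{real form} datum that sits on top of the bicharacter: over an algebraically closed field the classification is detected purely by the pair $(T,\beta)$, but over $\R$ one must additionally track signs of squares and, when $R_e = \C$, whether elements of order $4$ in $T$ lift to elements of order $4$ or order $8$ of $R$. This is precisely what forces the asymmetry between the eight $D=\C$ items and the three items in each of the $D=\R$ and $D=\H$ blocks; showing that the list of residuals above is exhaustive, with no missed case and no hidden equivalence between different items or different $k$, is the step where the argument will require the most care and is likely where the "somewhat different argument" promised in the Introduction pays off compared to \cite{BZGDS}.
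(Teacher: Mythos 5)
First, a point of order: the paper itself does not prove Theorem~\ref{tgdar}; Section~\ref{ssgda} explicitly \emph{recalls} it from \cite{BZGDS} and \cite{ARE}, even noting that item (xii) is absent from the first reference and was supplied only by the second. So your proposal has to be judged against the cited literature rather than an in-paper argument, and unfortunately its central mechanism does not work.

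The gap is in stage (b). From a hyperbolic pair $X_g,X_h$ ($g\ne h$ of order $2$, $X_g^2=X_h^2=I$, $X_gX_h=-X_hX_g$) the double centralizer theorem gives an \emph{ungraded} factorization $R\cong B\ot C$ with $B=\alg\{X_g,X_h\}\cong M_2(\R)$, but this splits off a graded tensor factor $M_2^{(4)}$ only if $\Supp C\cap\langle g,h\rangle=\{e\}$, i.e.\ only if no nonzero homogeneous element of degree $g$, $h$ or $gh$ centralizes $B$; equivalently $\dim C_e=\dim R_e$. This condition can fail. Concretely, in $M_4^{(4)}$ (item (iii)), with the components as in (\ref{eq12}), the elements $C\ot C\in R_\alpha$ and $I\ot A\in R_\beta$ form a hyperbolic pair, yet $M_4^{(4)}$ cannot be equivalent to $M_2^{(4)}\ot S$ for any graded division $S$ (dimension and support counts force $S=\H$ with trivial grading, and $M_2(\R)\ot\H\cong M_2(\H)\not\cong M_4(\R)$); the obstruction is the element $A\ot A\in R_{\alpha\beta}$, which centralizes the pair. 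The same failure occurs inside $M_2^{(8)}\ot M_2^{(2)}$ and $M_2^{(8)}\ot\H$: for instance $C\ot C$ and $I\ot A$ form a hyperbolic pair in $M_2^{(8)}\ot M_2^{(2)}$ whose centralizer contains $C\ot I$ of degree $\alpha$. So your characterization of the residual as ``carries no hyperbolic pair'' is false in both directions, and your iteration would either stall or produce wrong decompositions on items (iii), (xii) and (xiii). Tellingly, your list of atoms omits exactly the residuals $M_2^{(8)}\ot M_2^{(2)}$ and $M_2^{(8)}\ot\H$ of items (xii) and (xiii) --- the first being precisely the case missed in \cite{BZGDS} and restored in \cite{ARE}. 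A repaired argument must peel only pairs for which the homogeneous centralizer condition above holds, and must then classify residuals that may still contain hyperbolic pairs. Two smaller slips: $R_e$ does not separate the blocks $D=\R,\H,\C$ (in items (i) and (vii) alike $R_e\cong\R$; the blocks are separated by the ungraded isomorphism type of $R$, which is automatic under equivalence), and your stage (c) invariants as listed do not obviously distinguish (xi) from (xii), which share the algebra $M_{2^k}(\C)$, the support group $\ZZ_4\times\ZZ_2^{2k-2}$, component dimension $2$, and a noncentral identity component.
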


Notice that (xii) is missing on the list in \cite{BZGDS} but appears in \cite{ARE}. It is useful to mention that the components of the gradings in  Theorem \ref{tgdar} are 1-dimensional in the cases (i), (iv), (vii) and (x). They are 2-dimensional in (ii), (v), (viii), (xi), (xii), (xiv) and 4-dimensional in the remaining cases (iii), (vi), (ix) and (xiii).

\section{Complex case}\label{scxc}

Any complex $G$-graded division algebra $R$ is isomorphic to a twisted group algebra $\C^\sigma G$ where $\sigma: G\times G\to\C^\times$ is a 2-cocycle on $G$, that is a map satisfying $\sigma(g,h)\sigma(gh,k)=\sigma(g,hk)\sigma(h,k)$, for any $g,h,k\in G$. Two twisted group algebras, corresponding to cocycles $\sigma$ and $\sigma'$ are isomorphic if and only if the cocycles are cohomologous, that is, there is a map $\alpha:G\to\C^\times$ such that $\sigma(g,h)=\sigma'(g,h)\alpha(g)^{-1}\alpha(h)^{-1}\alpha(gh)$, for all $g,h\in G$. If we set $\beta(g,h)=\dfrac{\sigma(g,h)}{\sigma(h,g)}$ then $\beta$ is a alternating bicharacter on $G$, and it does not depend on the choice of $\sigma$ in its cohomology class. If we denote by $X_g$ the element of $R=\C^\sigma G$ corresponding to $g$ in $\C G$ then, in $\C^\sigma G$, we have $X_gX_h=\beta(g,h)X_hX_g$.
An important observation is that knowing $\beta$ and the orders of elements of the group, completely defines $R$. To see this, let us write $G$ as the direct product of cyclic subgroups $G=(g_1)_{n_1}\times\cdots\times(g_m)_{n_m}$ of orders $n_1,\ldots, n_m$. Then consider an algebra $A$ given by $G$-graded generators $a_1,\ldots,a_m$ of $G$-degrees $g_1,\ldots, g_m$ and $G$-graded relations of two kinds: $a_1^{n_1}=1,\ldots,a_m^{n_m}=1$ and $a_ia_j=\beta(g_i,g_j)a_ja_i$. Clearly, $\dim A\le n_1\cdots n_m$. On the other hand, if $o(g)=n$ then in $R_g$ one can choose an element $u$ such that $u^n=z\in\C$; replacing $u$ by $v=\frac{1}{\sqrt[n]{z}}u$ we obtain an element $v$ in $R_g$ with $v^n=1$. Thus the generators $X_{g_1},\ldots,X_{g_m}$ of degrees $g_1,\ldots,g_m$ for $R$ can be chosen so that $X_{g_i}^{n_i}=I$. As a result, $A$ maps $G$-graded homomorphically onto $R$, and dim $R=n_1\cdots n_m$. Hence $R\cong A$.

 Let us denote such grading by $\mathcal{P}(\beta)$. Since in $\mathcal{P}(\beta)$, we always have \[X_gX_h(X_g)^{-1}(X_h)^{-1}=\beta(g,h)I,\] for any $g,h\in G$, it follows that $\mathcal{P}(\beta')$ is isomorphic to $\mathcal{P}(\beta)$ if and only if $\beta'=\beta$. In the case of equivalence (=weak isomorphism) $ \mathcal{P}(\beta')\sim\mathcal{P}(\beta)$, accompanied by a group automorphism $\alpha: G\to G$, we must have  $\beta'(\alpha(g),\alpha(h))=\beta(g,h)$, for all $g,h\in G$. In other words, $\mathcal{P}(\beta')$ is equivalent to $\mathcal{P}(\beta)$ if and only if, $\beta'$ belongs to the same orbit as $\beta$ under the natural action of $\Aut G$ on the set of alternating bicharacters $G\times G\to\C^\times$. For instance, if $\mathcal{P}(\beta)$ is a simple algebra then $\beta$ is nonsingular. As indicated in \cite[Chapter 2]{EK}, all non-singular alternating bicharacters on $G$ form one orbit, hence, given a finite abelian group $G$, all division $G$-gradings on $M_n(\C)$ are equivalent. Actually, such gradings exist if and only if $G\cong H\times H$ where $|H|=n$. 

In the case of commutative algebras, $A$ is just the group algebra $\C G$. So any commutative $G$-graded division algebra $R$ over $\C$ is isomorphic to the group algebra $\C G$. In other words, $R$ is isomorphic to the graded tensor product  $\C (g_1)_{n_1}\ot\cdots\ot\C (g_m)_{n_m}$ of group algebras of cyclic groups and is completely determined by the same invariants as the abelian group $G$.

\section{Pauli gradings}\label{spg}
Let  $\Gamma:R=\bigoplus_{g\in G}R_g$ be a division grading of a real algebra $R$ such that $\dim R_e=2$ and $R_e$ is a central subalgebra of $R$.  Then there is an isomorphism of algebras $\mu:\C\to R_e$. If $I=\mu(1)$ and $J=\mu(i)$ then setting $(a+bi)X=(aI+bJ)X$ where $a,b\in\R$, $X\in R$ turns $R$ to a complex algebra $R^\C$ endowed with a complex grading $\Gamma^\C$, which is a division grading. In this grading $(R^\C)_g=R_g$. By the previous section, there is an alternating bicharacter $\beta:G\times G\to\C^\times$ such that $\Gamma^\C\cong \cP(\beta)$. Since, obviously, $(\Gamma^\C)_\R\cong \Gamma$, we have that any real grading with two-dimensional central identity component is isomorphic to $\cP(\beta)_\R$, for an appropriate alternating complex bicharacter $\beta$ on $G$. We call the gradings of the type $\cP(\beta)$ \textit{Pauli gradings}.

Thus $R=\cP(\beta)_\R$, as a unital algebra with identity elements $I$, in terms of graded generators and defining relations can be given as follows. We choose the canonical decomposition $G=(g_1)_{n_1}\times\ld\times(g_m)_{n_m}$, $n_1 |\cdots|n_m$, and choose the generators $J$ with trivial grading and $X_i$ of degree $g_i$, for all $i=1,\ld,m$. Then we impose the relations $J^2=-I$, $X_i^{n_i}=I$, $JX_i=X_iJ$, for all $i=1,\ld,m$, and $X_iX_j=\mu(\beta(g_i,g_j))X_jX_i$. Here $\mu:\C\to \langle I,J\rangle$ is given, as above.

The first claim of the following theorem is clear from the above.

\begin{theorem}\label{Pauli} If $\Gamma: R=\bigoplus_{g\in G}R_g$ is a real $G$-graded division algebra such that $\dim R_e=2$ and $R_e$ is central then there exists an alternating bicharacter $\beta: G\times G\to\C^\times$ such that $\Gamma=\mathcal{P}(\beta)_\R$. Furthermore, $\mathcal{P}(\beta)_\R$ is isomorphic to $\mathcal{P}(\beta')_\R$ if and only if either $\beta'=\beta$ or $\beta'=\overline{\beta}$. Finally, $\mathcal{P}(\beta)_\R$ is equivalent to $\mathcal{P}(\beta')_\R$ if and only if  the orbit of $\beta'$ under the action of $\Aut G$ contains $\beta$ or $\overline{\beta}$.
\end{theorem}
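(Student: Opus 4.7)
The first assertion is immediate from the discussion in Section~\ref{spg}: the choice of $J\in R_e$ with $J^2=-I$ gives a real algebra isomorphism $\mu\colon\C\to R_e$ with $\mu(i)=J$, and this promotes $\Gamma$ to a complex graded division algebra $\Gamma^\C\cong\mathcal{P}(\beta)$, so that $\Gamma=\mathcal{P}(\beta)_\R$.

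The remaining claims rest on the observation that $R_e\cong\C$ contains exactly two square roots of $-I$, namely $\pm J$. Replacing $J$ by $-J$ yields a second real algebra isomorphism $\nu\colon\C\to R_e$ with $\nu(i)=-J$, satisfying $\mu(z)=\nu(\overline{z})$ for all $z\in\C$. The graded commutation relations $X_iX_j=\mu(\beta(g_i,g_j))X_jX_i$ therefore rewrite as $X_iX_j=\nu(\overline{\beta(g_i,g_j)})X_jX_i$, so viewing the same underlying real graded algebra via the complex structure $\nu$ realises it as $\mathcal{P}(\overline{\beta})$. Consequently $\mathcal{P}(\beta)_\R=\mathcal{P}(\overline{\beta})_\R$ literally, which proves the ``if'' direction of both claims.

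For the ``only if'' direction, I would take any graded $\R$-algebra isomorphism (or equivalence accompanied by $\alpha\in\Aut G$) $\varphi\colon\mathcal{P}(\beta)_\R\to\mathcal{P}(\beta')_\R$ and examine its restriction to the identity component $R_e\to R_e'$. This is an $\R$-algebra isomorphism of two copies of $\C$, so it is either the identity or complex conjugation; accordingly, $\varphi(J)=\varepsilon J'$ for some $\varepsilon\in\{+1,-1\}$. When $\varepsilon=+1$, the map $\varphi$ is $\C$-linear for the original complex structures, hence constitutes a complex graded isomorphism (resp.\ equivalence) $\mathcal{P}(\beta)\to\mathcal{P}(\beta')$, and the complex classification in Section~\ref{scxc} forces $\beta'=\beta$ (resp.\ $\beta'$ in the $\Aut G$-orbit of $\beta$). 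When $\varepsilon=-1$, the previous paragraph identifies the codomain as $\mathcal{P}(\overline{\beta'})$ when equipped with the second complex structure $\nu'\colon\C\to R_e'$, $\nu'(i)=-J'$, and $\varphi$ is $\C$-linear from $\mathcal{P}(\beta)$ into this; the complex classification therefore yields $\beta=\overline{\beta'}$ (resp.\ $\beta'$ in the $\Aut G$-orbit of $\overline{\beta}$).

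The only delicate step is verifying that the change $J\mapsto-J$ of complex structure converts the commutation cocycle $\beta$ to $\overline{\beta}$; everything else reduces to a clean two-case analysis on the action of $\varphi$ restricted to the commutative identity component $R_e$, together with an appeal to the already established complex classification.
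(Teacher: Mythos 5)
Your proposal is correct and follows essentially the same route as the paper: both arguments hinge on the restriction of $\varphi$ to the identity component being either the identity or complex conjugation of $\C\cong R_e$, and both realize $\mathcal{P}(\beta)_\R\cong\mathcal{P}(\overline{\beta})_\R$ via the substitution $J\mapsto -J$. Your packaging of the ``if'' direction (the identity map on the underlying real algebra, with the conjugate complex structure $\nu(i)=-J$) and your reduction of the ``only if'' direction to the complex classification of $\mathcal{P}(\beta)$ are slightly slicker than the paper's explicit generators-and-relations check and commutator computation, but the content is the same.
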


\begin{proof} Suppose that $\vp:R=\cP(\beta)_\R\to\cP(\beta')_\R=R'$ is an isomorphism of real algebras such that for some automorphism $\alpha:G\to G$ we have $\vp(R_g)=R'_{\alpha(g)}$, for all $g\in G$. Clearly then $\vp(R_e)=R'_e$. If $\mu':\C\to R'_e$ is an isomorphism of algebras, similar to $\mu$ above, then $\wt{\vp}=(\mu')^{-1}\vp\mu:\C\to\C$ is an automorphism of $\C$ over $R$. Thus either $\wt{\vp}(z)=z$ or $\wt{\vp}(z)=\bz$, the complex conjugate of $z$. As a result, either $\vp(\mu(z))=\mu'(z)$ or $\vp(\mu(z))=\mu'(\bz)$. 

It remains, for any $g,h\in G$, to compute both sides of the equation  
\[
\vp(XYX^{-1}Y^{-1})=\vp(X)\vp(Y)\vp(X)^{-1}\vp(Y)^{-1},
\]
where $0\ne X\in R_g$ and $0\ne Y\in R_h$. Then $0\ne \vp(X)\in R_{\alpha(g)}$ and $0\ne \vp(Y)\in R_{\alpha(h)}$. Now the value of the left hand side is $\vp(\mu(\beta(g,h))$, while for the right one, we have $\mu'(\beta'(\alpha(g),\alpha(h))$. Using the above conclusion, in the first case, we have $\mu'(\beta(g,h))=\mu'(\beta'(\alpha(g),\alpha(h)))$, that is, $\beta=(\beta')^\alpha$. In the second case, $\mu'(\overline{\beta(g,h)})=\mu'(\beta'(\alpha(g),\alpha(h))$, that is, $\overline{\beta}=(\beta')^\alpha$.

Setting $\alpha=\id_G$, we obtain the ``only if'' part of conclusion of the Theorem about the isomorphism. If $\alpha$ is arbitrary, then we obtain the ``only if'' part of the conclusion about the equivalence. It remains to show that $R=\cP(\beta)_\R\cong R'=\cP(\overline{\beta})_\R$ (the case of equivalence is left to the reader as an exercise). 

Let us use for $R=\cP(\beta)_\R$ a presentation in terms of graded generators and defining relations, as described earlier. Let the presentation for $R=\cP(\overline{\beta})_\R$ be given by generators $J', X_i$, relations $(J')^2=-I$, $(X_i')^{n_i}=I$ and $X_i'X_j'=\mu'(\overline{\beta}(g_i,g_j))X_j'X_i'$, with indexes running through the same sets as in the case of $\cP(\beta)_\R$. If we consider the graded map $J\to -J$, $X_i\to X_i'$ then all the relations, with the possible exception of the last ones, obviously hold. To check this latter one, note that under our substitution, $\mu(z)$ is replaced by $\mu'(\bz)$. Then the relation of $\cP(\beta)_\R$: $X_iX_j=\mu(\beta(g_i,g_j))X_jX_i$ is replaced by 
$X_i'X_j'-\mu'(\overline{\beta(g_i,g_j)})X_j'X_i'$, which holds in $\cP(\overline{\beta})_\R$. Thus our map extends to a graded isomorphism of $\cP(\beta)_\R$ and $\cP(\overline{\beta})_\R$, because the dimensions of both algebras are the same number $2|G|$. Now the proof is complete.
\end{proof}

The study of non-singular alternating bicharacters over an algebraically closed field (often called the \textit{commutation factors}) is performed in the papers \cite{Zm} and \cite{Zo}.

 \section{Commutative case}\label{scc}
In the case of real commutative graded division algebras, the situation is different from the case of complex commutative graded division algebras. Given a natural number $n>1$ and a number $\ve=\pm 1$, we denote by $\cC(m;\ve)$ the real graded subalgebra in the complex group algebra $\C(g)_m$ of the cyclic group of order $m$ generated by $\mu g$, where $\mu^m=\ve$. Alternatively, one can view  $\cC(m;\ve)$ as a graded algebra generated by one element $x$ of degree $g$, with defining relation $x^m=\varepsilon\,I$. We will call this algebra a \textit{basic algebra of the first kind}. Of course, $\cC(2;-1)\cong\C^{(2)}$.

\begin{proposition}\label{tcc}Let $ A $ be a $G$-graded commutative division algebra over the field $ \mathbb{ R }$ of real numbers. Suppose $ \dim A_e=1 $. Let $G = (g_1)_{m_1}\times \cdots\times (g_k)_{m_k}$ be the direct product of primary cyclic subgroups of orders $m_ 1,\ldots, m_k $. Then
\begin{displaymath}
A\cong \cC(m_1;\ve_1)\ot\cdots\ot \cC(m_k;\ve_k),
\end{displaymath}
for a sequence of numbers $\ve_1,\ld,\ve_k=\pm 1$. Here, additionally, we can assume that $\ve_i=1$ in the case where $m_i$ is an odd number.
\end{proposition}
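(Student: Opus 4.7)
The plan is to pick a nonzero homogeneous element in each of the cyclic factor components, normalize its $m_i$-th power to $\pm 1$, and then assemble everything into a graded isomorphism from the claimed tensor product onto $A$.

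\textbf{Step 1 (set-up).} By Lemma \ref{lsf}(3), every homogeneous component of $A$ is $1$-dimensional, and by commutativity, all homogeneous elements commute. For each $i=1,\ldots,k$, fix a nonzero $x_i\in A_{g_i}$. Since $x_i^{m_i}\in A_e=\R I$, we can write $x_i^{m_i}=c_i I$ for some $c_i\in\R^\times$.

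\textbf{Step 2 (normalization).} Replacing $x_i$ by $\lambda x_i$ with $\lambda\in\R^\times$ replaces $c_i$ by $\lambda^{m_i}c_i$. If $m_i$ is odd, the function $\lambda\mapsto\lambda^{m_i}$ is surjective onto $\R$, so we may achieve $c_i=1$; if $m_i$ is even, $\lambda^{m_i}>0$, hence after scaling we may assume $c_i=\ve_i:=\sgn(c_i)\in\{\pm 1\}$. Thus $x_i^{m_i}=\ve_i I$ with $\ve_i=1$ whenever $m_i$ is odd.

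\textbf{Step 3 (comparison with the tensor product).} Let $B=\cC(m_1;\ve_1)\ot\cdots\ot\cC(m_k;\ve_k)$. For each $i$, let $y_i\in B_{g_i}$ denote $1\ot\cdots\ot\mu_ig_i\ot\cdots\ot 1$ (at the $i$-th slot), so that $y_i^{m_i}=\ve_i I$ and the $y_i$ pairwise commute. Using the presentation of $B$ by generators $y_1,\ldots,y_k$ with defining relations $y_i^{m_i}=\ve_i I$ and $y_iy_j=y_jy_i$, the assignment $y_i\mapsto x_i$ determines a well-defined graded algebra homomorphism $\vp:B\to A$, because the $x_i$ satisfy exactly the same relations inside $A$.

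\textbf{Step 4 (isomorphism).} The image of $\vp$ contains every $x_i$; since monomials in the $x_i$ span the $1$-dimensional components $A_g$ for all $g\in G$, $\vp$ is surjective. By Lemma \ref{lsf}(3), $\dim A=|G|=m_1\cdots m_k=\dim B$, so $\vp$ is a graded isomorphism. This yields the desired decomposition.

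The only genuinely substantive point is the normalization in Step 2: getting $\ve_i=\pm1$ uses only that $\R^\times/(\R^\times)^{m_i}$ has order $2$ when $m_i$ is even and is trivial when $m_i$ is odd. Everything else is formal graded algebra and a dimension count.
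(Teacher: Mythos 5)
Your proposal is correct and follows essentially the same route as the paper: normalize a homogeneous generator in each cyclic factor so that its $m_i$-th power is $\pm I$ (with $+I$ forced when $m_i$ is odd), then compare $A$ with the algebra presented by the power and commutation relations via a surjection and the dimension count $\dim A=|G|=m_1\cdots m_k$. The paper phrases the last step through a quotient of $\R[x_1,\ldots,x_k]$ and a concrete model inside $\C G$ rather than through the presentation of the tensor product $B$ directly, but this is the same argument.
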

\begin{proof}
Clearly, $ \dim A= m_1\cdots m_k $. We choose in each $ A_{g_i} $ an element $\tilde {a}_i $. Then $\tilde {a}_i^{m_i}\in A_{g_i^{m_i}}=A_e $. If $ \lambda_i= \tilde{a}_i ^{m_i}> 0 $ and  either $\lambda_i>0$ or $m_i$ is odd,  then we can replace $ \tilde {a}_i $ by $ a_i=\frac{ 1 }{\sqrt [m_i]{\lambda_i} }\tilde {a}_i $ to have $ a_i^{m_i}=1$. If $ \lambda_i <0 $, $ m_i $ is even, all we can make is $ a_i^{m_i}=-1$. In this case, choose a polynomial ring $\mathbb{R}[x_1,\ld, x_k] $. Then the map $ \vp:x_i\to a_i $ extends to a homomorphism $ \vp$  of $\mathbb{R}[x_1,\ld, x_k] $ onto $ A $. The kernel  $K=\Ker\vp $ contains $ x_i^{m_i}-1 $ or $ x_i^{m_i}+1 $. Clearly, in this case, $ \mathbb{R}[x_1,\ld, x_k]/K $ is spanned by the images of the elements $ x_1^{s_1}\cdots x_k^{s_k} $, where $ 0 \le s_i< m_i $. So $\dim\mathbb{R}[x_1,\ld, x_k]/K\le m_1\cdots m_k $. Since $\varphi$ is an onto map, we have that $A\cong \mathbb{R}[x_1,\ld, x_k]/K$.

Now let us consider the complex group ring $\mathbb{C} G$. This is a naturally $G$-graded real division algebra. Assume that our relations are $ x_i^{m_i}-1 $ for $ 1\le i \le \ell $ and $ x_i^{m_i}+1 $ for $ \ell+1 \le i \le k $. Then there is a $G$-graded $\mathbb{R}$-subalgebra $ B$ generated by $g_1,\ldots g_{\ell},\omega_{\ell+1} g_ {\ell+1},\ldots, \omega_{k} g_ {k}$, where $\omega_j^{m_j}=-1 $. The dimension of $ B $ over $\mathbb{R}$ equals $ m_1\cdots m_k $. As a result,  $ A $ can be written as $ \R [x_1, \ldots, x_k]/(x_1^{m_1}-1,\ld, x_{\ell}^{m_{\ell}}-1, x_{\ell+1} ^{m_{\ell+1}}+1,\ld, x_k^{m_k}+1)\cong B\subset \mathbb{C} G$, where $ B=\alg\{g_1,\ld,g_{\ell},\omega_{\ell+1} g_ {\ell+1},\ldots, \omega_{k} g_ {k} \}$.

If $n_i$ is an odd number and $\ve_i=-1$ then replacing $a_i$ by $-a_i$ we will get $(-a_i)^{n_i}=1$.
\end{proof}

It follows from the above proposition that if we use a primary factorization of $G$ as the product of cyclic subgroups then we could write $G=H\times K$ where $H$ is a 2-subgroup of $G$ and $K$ is a subgroup of odd order. As a result,  any commutative graded division algebra $A$ with one-dimensional homogeneous components and support $G$ can be written as the graded tensor product $A\cong B\otimes \R K$, where $B$ is the commutative graded division algebra whose support is an abelian 2-group $H$ and $\R K$ is the real group algebra of $K$.

The ungraded structure  of real graded commutative division algebras is given by the following.

\begin{proposition}\label{rgcda}\emph{
\begin{itemize}
\item $\cC(2^q;1)\cong \R\ZZ_{2^q}\cong\C\op\cdots\op\C\op\R\op\R$, for $q\ge 1$,
\item $\cC(2^q;-1)\cong\C\op\cdots\op\C$, because $x^2=-1$ is not solvable in $\R$;
\item $\cC(2^{m_1};1)\ot\cdots\ot\cC(2^{m_t};1)\cong\C\op\cdots\op\C\op\underbrace{\R\op\cdots\op\R}_{2t}$,
\item $\cC(2^{m_1};\eta_1)\ot\cdots\ot\cC(2^{m_t},\eta_t) \cong\C\op\cdots\op\C$ if at least one $\eta_i=-1$.
\end{itemize}
}
\end{proposition}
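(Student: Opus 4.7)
The plan is to reduce each item to a computation in a polynomial quotient ring, apply the Chinese Remainder Theorem (CRT), and then use distributivity of $\ot_\R$ over $\op$ to handle the tensor products.

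For items 1 and 2, note that from the presentation described just before Proposition \ref{tcc} the algebra $\cC(2^q;\ve)$ is generated by one element $x$ subject to $x^{2^q}=\ve I$, so as an ungraded $\R$-algebra
\[
\cC(2^q;\ve)\cong \R[x]/(x^{2^q}-\ve),
\]
and in particular $\cC(2^q;1)\cong \R\ZZ_{2^q}$ under $x\mapsto g$. Over $\R$ one has the factorization
\[
x^{2^q}-1=(x-1)(x+1)(x^2+1)(x^4+1)\cdots(x^{2^{q-1}}+1),
\]
and for each $k\ge 1$ the polynomial $x^{2^k}+1$ splits into $2^{k-1}$ irreducible real quadratics, obtained by pairing each root $e^{i\pi(2j+1)/2^k}$ with its complex conjugate. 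The CRT then gives item 1 with two copies of $\R$ and $1+2+\cdots+2^{q-2}=2^{q-1}-1$ copies of $\C$. Item 2 is entirely analogous, using only the factorization of $x^{2^q}+1$ into $2^{q-1}$ irreducible real quadratics, producing $2^{q-1}$ copies of $\C$ and no copies of $\R$.

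For items 3 and 4 I would use distributivity of $\ot_\R$ over $\op$ combined with the basic isomorphism $\C\ot_\R\C\cong \C\op\C$, realized by $z\ot w\mapsto(zw,z\bar w)$, and $\R\ot_\R A\cong A$. Expanding the tensor product of the decompositions from items 1 and 2, each simple summand of the result arises by choosing one summand from each tensor factor, and the rules
\[
\R\ot_\R\R=\R,\qquad \R\ot_\R\C=\C,\qquad \C\ot_\R\C=\C\op\C
\]
show that an $\R$-summand can appear only when an $\R$-summand is chosen in every factor. For item 3, each $\cC(2^{m_i};1)$ contributes exactly two $\R$-summands by item 1, giving $2^{t}$ copies of $\R$ in the product (matching the displayed multiplicity, which should read $2^{t}$ rather than $2t$), with all remaining summands forced by dimension to be copies of $\C$. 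For item 4, any factor with $\eta_i=-1$ contains no $\R$-summand by item 2, so no $\R$-summand can survive in the expansion.

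The main obstacle is purely bookkeeping: verifying the real factorizations of $x^{2^q}\pm 1$ and correctly tracking multiplicities through the tensor expansion. Both points are elementary once stated, so after the setup above the proof is essentially mechanical.
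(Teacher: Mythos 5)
The paper offers no proof of this proposition (it is left to the reader as an exercise), so there is nothing to compare against; your argument via the real factorization of $x^{2^q}\mp\ve$, the Chinese Remainder Theorem, and the rules $\R\ot_\R\R\cong\R$, $\R\ot_\R\C\cong\C$, $\C\ot_\R\C\cong\C\op\C$ is correct and complete, and is surely the intended one. You are also right that the multiplicity of $\R$-summands in the third item must be $2^{t}$ rather than the stated $2t$: already for $t=3$ and $m_1=m_2=m_3=1$ one has $\R\ZZ_2^{\ot 3}\cong\R[\ZZ_2^3]\cong\R^{\op 8}$, whereas $2t=6$ real summands would force a copy of $\C$ that does not exist. (More generally the count of $\R$-summands of $\R G$ is the number of characters $G\to\{\pm1\}$, which is $2^{t}$ here.) Since the paper only invokes this proposition as an optional alternative route to the nonequivalence statements in Theorem \ref{tcca}, the typo has no downstream effect.
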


We leave the proof to the reader as an easy exercise.

To determine the equivalence classes of real graded commutative division algebras, we need the following result.

\begin{lemma}\label{lce} Let $m,n$ be any natural numbers, $m\le n$. Then
\begin{displaymath}
\cC(2^m;-1)\ot\cC(2^n;-1)\sim\cC(2^m;1)\ot\cC(2^n;-1).
\end{displaymath}
\end{lemma}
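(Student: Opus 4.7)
The plan is to exhibit explicit new homogeneous generators in $\cC(2^m;1)\ot\cC(2^n;-1)$ that satisfy the defining graded relations of $\cC(2^m;-1)\ot\cC(2^n;-1)$ up to a group automorphism of $G=(g_1)_{2^m}\times(g_2)_{2^n}$. Since both algebras have the same dimension $2^{m+n}$, this suffices by the presentation-by-generators-and-relations argument already used in Proposition \ref{tcc}.

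More concretely, let $x,y$ be the canonical generators of $\cC(2^m;1)\ot\cC(2^n;-1)$ of degrees $g_1,g_2$, satisfying $x^{2^m}=I$, $y^{2^n}=-I$, $xy=yx$. Using the hypothesis $m\le n$, I would set
\[
X'=x\,y^{2^{n-m}},\qquad Y'=y.
\]
Then $X'$ is homogeneous of degree $g_1g_2^{2^{n-m}}$ and $Y'$ of degree $g_2$. A direct computation, exploiting commutativity and $x^{2^m}=I$, gives $(X')^{2^m}=x^{2^m}y^{2^n}=-I$, while $(Y')^{2^n}=-I$ and $X'Y'=Y'X'$. Thus $X',Y'$ satisfy precisely the defining relations of $\cC(2^m;-1)\ot\cC(2^n;-1)$.

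Next, I would verify that the assignment $\alpha:g_1\mapsto g_1g_2^{2^{n-m}}$, $g_2\mapsto g_2$ is a well-defined automorphism of $G$: well-definedness reduces to $(g_1g_2^{2^{n-m}})^{2^m}=e$, which is immediate, and surjectivity follows since $g_2$ lies in the image and $g_1=\alpha(g_1)\alpha(g_2)^{-2^{n-m}}$. The graded presentation of $\cC(2^m;-1)\ot\cC(2^n;-1)$ therefore yields a graded algebra homomorphism $\vp:\cC(2^m;-1)\ot\cC(2^n;-1)\to\cC(2^m;1)\ot\cC(2^n;-1)$ with $\vp(X)=X'$, $\vp(Y)=Y'$, sending the degree-$g$ component to the degree-$\alpha(g)$ component. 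Since $X',Y'$ generate the target (one recovers $x=X'(Y')^{-2^{n-m}}$ using that $Y'$ is invertible), $\vp$ is surjective, and an equal-dimension count forces it to be a bijection. Hence $\vp$ is a weak isomorphism, and in particular an equivalence of $G$-gradings.

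No step here is genuinely difficult; the only point requiring care is the choice of the shift exponent $2^{n-m}$, which is dictated by the requirement that $(X')^{2^m}$ absorb precisely one factor of $y^{2^n}=-I$. The inequality $m\le n$ is exactly what makes this exponent a nonnegative integer, and this is where the asymmetry in the statement enters.
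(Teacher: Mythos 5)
Your proof is correct and follows essentially the same route as the paper: both arguments replace the degree-$g_1$ generator by its product with $y^{2^{n-m}}$ (the paper writes this as $u_1\mapsto uv^{2^k}$ with $k=n-m$, mapping in the opposite direction between the two presentations), check the resulting power and commutation relations, and note that the accompanying substitution $g_1\mapsto g_1g_2^{2^{n-m}}$, $g_2\mapsto g_2$ is an automorphism of the grading group. Your write-up is somewhat more explicit about the automorphism check and the dimension count, but there is no substantive difference.
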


\begin{proof}
 We write
\begin{eqnarray*}
&&\cC(2^{m};-1)\ot \cC(2^{n};-1)=\alg\{ u,v\,|\,u^{2^m}=-1,\; v^{2^n}=-1;uv=vu\}\\
&&\cC(2^{m};1)\ot \cC(2^{n};-1)=\alg\{ u_1,v_1\,|\,u_1^{2^m}=1,\; v_1^{2^n}=-1;uv=vu\}
\end{eqnarray*}
Here  $\cC(2^{m};-1)$ is graded by the cyclic group $G=(a)_{2^m}$ and $\cC(2^{n};-1)$ by the cyclic group $H=(b)_{2^n}$, so that $\cC(2^m;-1)\ot\cC(2^n;-1)$ is graded  by $G\times H=(a)_{2^m}\times(b)_{2^n}$. The same group grades $\cC(2^m;1)\ot\cC(2^n;-1)$.
Since $m\le n$,  then there is $k\le n$ such that $(v^{2^k})^{2^m}=-1$ and $(uv^{2^k})^{2^m}=1$. The mapping $u_1\to uv^{2^k}$, $v_1\to v$ extends to an ungraded algebra isoomorphism of $\cC(2^{m};1)\ot \cC(2^{n};-1)$ to $\cC(2^m;-1)\ot\cC(2^n;-1)$. Actually, this is an isomorphism, accompanied by an automorphism of $G\times H$, mapping $a\to ab^{2^k}$, $b\to b$. The proof is complete.
\end{proof}

Now we are ready to prove the main result about the structure of real commutative graded division algebras.
\begin{theorem}\label{tcca}
Any real finite-dimensional commutative graded division algebra is equivalent to exactly one of the following:
\begin{enumerate}
\item A naturally graded real group algebra $\R G$, for a finite abelian group $G$;
\item A naturally graded complex group algebra $\C G$, viewed as a real algebra, for a finite abelian group $G$;
\item A tensor product of graded algebras $\C(2^m;-1)\ot \R G$.
\end{enumerate}
\end{theorem}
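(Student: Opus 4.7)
The plan is to split the argument on $\dim A_e$. Since $A$ is commutative, $A_e$ is a commutative real division algebra, so $A_e\cong\R$ or $A_e\cong\C$ (the quaternionic case is automatically ruled out). In the case $\dim A_e=2$, $A_e$ is central because $A$ is commutative, and mimicking the construction that opens Section \ref{spg}, a choice of isomorphism $\mu\colon\C\to A_e$ turns $A$ into a complex $G$-graded algebra $A^\C$ with one-dimensional homogeneous components. Commutativity forces the associated bicharacter from Section \ref{scxc} to be trivial, so $A^\C\cong\C G$ as $G$-graded $\C$-algebras; restricting scalars back to $\R$ yields case (2).

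For $\dim A_e=1$, Proposition \ref{tcc} gives
\[
A\cong\cC(m_1;\ve_1)\ot\cdots\ot\cC(m_k;\ve_k),
\]
with $\ve_i=+1$ whenever $m_i$ is odd. I would next separate the odd-order factors, whose tensor product is the naturally graded real group algebra $\R K$ with $K$ the odd-order part of $G$, from the $2$-power factors $\cC(2^{b_1};\ve_1')\ot\cdots\ot\cC(2^{b_s};\ve_s')$, ordered so that $b_1\le\cdots\le b_s$. If every $\ve_i'=+1$, the $2$-power part is $\R H$ for the $2$-primary part $H$ of $G$, so $A\cong\R G$ and we are in case (1). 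Otherwise let $j$ be the largest index for which $\ve_j'=-1$. For each other index $i<j$ with $\ve_i'=-1$ one has $b_i\le b_j$, so Lemma \ref{lce} gives
\[
\cC(2^{b_i};-1)\ot\cC(2^{b_j};-1)\sim\cC(2^{b_i};+1)\ot\cC(2^{b_j};-1).
\]
Applying these equivalences in turn, each tensored with the identity on the remaining factors, converts every $-1$ other than $\ve_j'$ to $+1$, and the result reassembles as $\cC(2^{b_j};-1)\ot\R G'$ for an appropriate finite abelian group $G'$, which is case (3). The hardest step is the bookkeeping here: one must verify that each invocation of Lemma \ref{lce} extends to a graded equivalence of the full tensor product, but this is immediate because $\ve_j'=-1$ is preserved at every stage, so the hypotheses of the lemma remain available for each subsequent absorption.

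To close, I would verify the ``exactly one'' clause. Cases (1) and (2) are separated from each other by $\dim A_e$, as are (2) and (3). To distinguish (1) from (3), both of which have $\dim A_e=1$, I would invoke Proposition \ref{rgcda}: algebras of type (3) decompose, as ungraded real algebras, into a direct sum of copies of $\C$ only, whereas any $\R G$ has at least one $\R$ summand arising from the trivial representation of $G$. This exhausts the possibilities and shows that the three families are pairwise inequivalent.
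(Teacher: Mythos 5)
Your reduction to the three families follows the paper's own route almost verbatim: split on $\dim A_e$, handle $\dim A_e=2$ via the complex classification of Section~\ref{scxc}, and for $\dim A_e=1$ feed Proposition~\ref{tcc} into Lemma~\ref{lce} to absorb all but one sign $-1$ (your bookkeeping with the largest index $j$ carrying $\ve_j'=-1$ is exactly what the lemma's hypothesis $m\le n$ requires, so that part is sound). For separating family (1) from family (3) you use Proposition~\ref{rgcda} (no $\R$-summand in type (3), at least one in type (1)); the paper instead counts homogeneous solutions of $x^2=-1$, but it explicitly remarks that the ungraded decomposition of Proposition~\ref{rgcda} would also work, so that substitution is fine.

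There is, however, a genuine gap in your ``exactly one'' verification: you only show that the three \emph{families} are pairwise inequivalent, not that distinct members \emph{within} family (3) are inequivalent. The parameters $m$ and $G$ are not recoverable from the data you use. For instance, $\cC(4;-1)\ot\R\ZZ_2$ and $\cC(2;-1)\ot\R\ZZ_4$ have isomorphic supports ($\ZZ_4\times\ZZ_2$) and are both isomorphic to $\C\op\C\op\C\op\C$ as ungraded real algebras, so neither the support nor Proposition~\ref{rgcda} distinguishes them. The paper closes this with a graded argument: for $m<n$ the equation $x^{2^n}=-1$ has a homogeneous solution in $\cC(2^n;-1)$ but none in $\cC(2^m;-1)\ot\R G$ for any $G$ (every homogeneous element there is $\lambda x^s g$ and its $2^n$-th power is $\lambda^{2^n}g^{2^n}\ne -1$), which pins down $m$ and then $G$ from the support. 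You need to add this step (or an equivalent invariant) to complete the uniqueness claim; within families (1) and (2) uniqueness is immediate from the support, as you implicitly use.
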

\begin{proof}
Let $R$ be a real finite-dimensional commutative graded division algebra. We must have $\dim R_e=1$ or  $\dim R_e=2$. In the second case, $R_e\cong \C$ and $R$ is a complex graded division algebra. So this is $\C G$, for a finite abelian group $G$. Otherwise, we can use Proposition \ref{tcc} and the remark after the proof of that proposition to write $R$ as $\cC(2^{m_1};\ve_1)\ot\cdots\ot \cC(2^{m_k};\ve_k)\ot \R K$. Now $\cC(2^m;1)$ is just the real group algebra of the cyclic group of order $2^m$. So we may assume that all $\ve_i$ in the latter tensor product equal $-1$. Applying Lemma \ref{lce}, we can transform this tensor product to  $\cC(2^{m_1};1)\ot\cdots\ot \cC(2^{m_{k-1}};1)\ot\cC(2^{m_k};-1)\ot \R K$ and finally to the form $\C(2^m;-1)\ot \R G$. So we have proved that real finite-dimensional commutative graded division algebra is equivalent to one of the algebras in the statement of the Theorem.

Since the notion of equivalence includes an isomorphism of groups, $G$ is an invariant in the first two cases. Because the dimensions of homogeneous components in the first and the third case are 1 and in the second 2, none of the algebras of second case is equivalent to an algebra in the first or the third case. Since in the first case, in distinction with the third one, there are no homogeneous solutions to the equation $x^2=-1$, the algebras in the first case are not equivalent to the algebras in the third case. 

More precisely, if $1\le m< n$, then in  $\C(2^n;-1)$ there is a  homogeneous solution to the equation $x^{2^n}=-1$, whereas in $\C(2^m;-1)\ot \R G$, for any $G$,  such solutions do not exist. Indeed, a homogeneous element in $\C(2^m;-1)\ot \R G$ has the form of $\lambda x^s g$, where $\lambda\in \R$, $x$ a graded generator of $\C(2^m;-1)$, $s$ a natural number, $g\in G$. Then $(\lambda x^s g)^{2^n}=\lambda^{2^n} (x^s)^{2^n} g^{2^n}=\lambda^{2^n} (x^{2^n})^s g^{2^n}=\lambda^{2^n} g^{2^n}\ne -1$. As a result, the equivalent algebras in the third case must have isomorphic supports of the from $(a)_{2^m}\times G$. So the number $m$ and the group $G$ are defined uniquely. Now the proof is complete.
\end{proof}

For the proof of the ``nonequivalence'' part one could also use Proposition \ref{rgcda}. But the idea of looking at the homogeneous solutions of certain (sets of) equations suggested in the above proof will be used in considerably more complicated situations later in this paper.

\section{Noncommutative graded division algebras with 1-dimensional homogeneous components}\label{sOne_dim}

Let $\Gamma: R=\bigoplus_{g\in G}R_g$ be a division grading of an algebra $R$ over $\R$ by an abelian group $G$. For a subgroup $H$ of $G$ we denote by $R_H$ the sum of all graded components $R_g$, where $g\in H$. Then $R_H$ is a graded subalgebra of $R$. If $G=H\times K$ then, as a vector space, $R=R_H\otimes R_K$. If,   additionally, $R_H$ and $R_K$ commute, then $R\cong R_H\ot R_K$ is the tensor product of algebras, endowed with the tensor product of gradings.

Now let us assume $\dim R_e=1$. Let $G=(g_1)_{n_1}\times\cdots\times(g_q)_{n_q}$ be a primary  cyclic factorization of $G$, where each $g_i$ is an element of order $n_i$, $i=1,\ld,q$. Let $g,h\in \{ g_1,\ld,g_q\}$. Then for any nonzero $a\in R_g$ and $b\in R_h$ we should have $aba^{-1}b^{-1}=\lambda\in\R$. Since $a^n\in R_e\cong\R$, we must have $a^nba^{-n}=b=\lambda^n b$. Hence $\lambda^n=1$ and then also $\lambda=\pm 1$. If $n$ is odd then we must have $\lambda = 1$ so that $R_g$ is in the center of $R$. It is also possible that $R_g$ is in the center of $R$ for some more $g\in\{ g_1,\ld,g_q\}$. Let $H$ be the subgroup of $G$ generated by all these elements, $K$ the subgroup generated by the remaining elements in $\{ g_1,\ld,g_q\}$. Then $G=H\times K$ where $R_H$ is a commutative $H$-graded division algebra and $R_K$ is a $K$-graded division algebra, where $K$ is an abelian $2$-group. None of the graded components $R_{g_i}$, $g_i\in K$, is in the center of $R_K$.

The structure of $R_H$ has been described in Theorem \ref{tcca}. So from now on we assume that $G=(g_1)_{n_1}\times\cdots\times(g_q)_{n_q}$ is an abelian 2-group, $n_1|n_2|\ld|n_q$ and none of $R_{g_i}$ is central. Suppose that $k$ be the least number such that $R_{g_1}$ does not commute with $R_{g_k}$. If also $R_{g_1}$ does not commute with $R_{g_{t_1}},\ld,R_{g_{t_p}}$, for some $k<t_1<\cdots<t_p$, we replace $g_{t_j}$ by $g_kg_{t_j}$ and $R_{g_{t_j}}$ by $R_{g_kg_{t_j}}$. Since $o(g_{t_j})=o(g_kg_{t_j})$ and $R_{g_kg_{t_j}}=R_{g_k}R_{g_{t_j}}$, we will have that now $R_{g_1}$ commutes with all new $R_{g_j}$ but one, which is $R_{g_k}$. Now let $1<s_1<s_2<\ldots<s_p$ be such that $R_{g_k}$ does not commute with $R_{g_{s_j}}$. We replace $g_{s_j}$ by $g_1g_{s_j}$ and $R_{g_{s_j}}$ by $R_{g_1g_{s_j}}$. Then again $o(g_{s_j})=o(g_1g_{s_j})$ and now $R_{g_k}$ commutes with $R_{g_1g_{s_j}}=R_{g_1}R_{g_{s_j}}$. As  a result, we find that $R$ is a graded tensor product of graded subalgebras $R_{(g_1)\times(g_k)}$ and $R_K$, where $K=(g_2)_{n_2}\times\cdots\times(g_{k-1})_{n_{k-1}}\times(g_{k+1})_{n_{k+1}}\times\cdots\times(g_{q})_{n_{q}}$. This allows us to proceed by induction to finally write a graded tensor product $R=R_1\ot R_2\ot\cdots\ot R_n$, where each $R_i$ is a graded division algebra whose support is the product of at most two cyclic 2-groups.

Those algebras with cyclic support have been described in Proposition \ref{tcc}. A graded division algebra $R$ with $\dim R_e=1$, whose support is the product of two cyclic 2-groups $G=(g)_k\times(h)_\ell$  can be described, as follows. We choose $a\in R_g$ and $b\in R_h$ so that $a^k=\mu I$ and $b^\ell=\nu I$, where $\mu,\nu =\pm 1$. These elements are graded generators of the whole of  $R$, and they anticommute: $ab=-ba$. Since $R$ is graded, we have $\dim R= k\ell$. It remains to produce a $(k\ell)$-dimensional $G$-graded algebra with generators $u,v$ of the same degrees as $a,b$, respectively, satisfying the same relations. To do so, we consider $S=\R G\ot M_2(\C)$ and inside a subalgebra $\cD(k,\ell ;\mu,\nu)$ generated by $u= g\ot (\ve A), v= h\ot (\eta B)$. Here $\ve^k=\mu, \eta^\ell=\nu$ and $A$, $B$ standard Pauli matrices $A=\left(\begin{array}{cc} 1 &0\\0&-1\end{array}\right)$, $B=\left(\begin{array}{cc} 0 &1\\1&0\end{array}\right)$. The reader easily checks that the  required relations are satisfied and that the set of elements $u^iv^j=g^ih^j\ot \ve^i\eta^jA^iB^j$, where $0\le i<k$, $0\le j<\ell$ is linearly independent because this is true for the set of elements $g^ih^j$, $0\le i<k$, $0\le j<\ell$, forming a basis of $\R G$. So $R$ admits a graded homomorphism onto $S$. Comparing dimensions, we can see that $R\cong \cD(k,\ell ;\mu,\nu)$. As we see, if  $G=(g)_k\times(h)_\ell$, then, in terms of $G$-graded generators and defining relations, this algebra can be given by

 \begin{displaymath}
 \cD(k,\ell ;\mu,\nu)=(u,v\;|\; u^k=\mu I,\,v^\ell=\nu I, uv=-vu,\, \deg u=g,\,\deg v=h).
\end{displaymath}

Recall that $\mu,\nu=\pm 1$. In what follows we will always be assuming that $k\le\ell$.

We will call $ \cD(k,\ell ;\mu,\nu)$ a \textit{basic algebra of the second kind}. A generator $u$  is called \textit{even} if $\mu=1$. Otherwise, $u$ is called \textit{odd}. We write $d(u)=k$ and call $k$ the \textit{degree} of $u$. The same terminology will be used for $v$ and for the generator of the basic (commutative) algebra $\cC(m;\eta)$. The relations of the form $u^k=\mu I$ will be called the \textit{power relations} while those of the form $uv=\pm vu$ the\textit{ commutation relations}.

Notice that the algebras $R\cong \cD(k,\ell ;\mu,\nu)$ generalize both $\H^{(4)}$ and $M_2^{(4)}$. We have 
\begin{equation}\label{ehm} 
\H^{(4)}= \cD(2,2;-1,-1),\;M_2^{(4)}=\cD(2,2;1,1)\sim \cD(2,2;-1,1)\sim \cD(2,2;1,-1). 
\end{equation}
They can also be viewed as generalized Clifford algebras. 

Taking into account Theorem \ref{tcca}, we obtain the following.

\begin{theorem}\label{tonedim} Let $G$ be a finite abelian group. Then any real non-commutative finite-dimensional $G$-graded division algebra with one-dimensional graded components is equivalent to the graded tensor product of several copies of  $\cD(2^k,2^\ell;\mu,\nu)$, at most one copy of $\cC(2^m;-1)$ and a group algebra $\R H$, where $k,\ell,m$ are natural numbers, $k\le\ell$,  $\mu, \nu=\pm 1$ and $H$ a subgroup, which is a direct factor of $G$. 
\end{theorem}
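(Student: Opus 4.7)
The proof is essentially a consolidation of results developed earlier in this section, combined with Theorem \ref{tcca} and Lemma \ref{lce}.

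First, I would invoke the centrality analysis from the opening paragraphs: since $R$ is noncommutative with $\dim R_e=1$, the primary cyclic generators $g_1,\ldots,g_q$ of $G$ split into two sets, those with $R_{g_i}$ central (including every $g_i$ of odd order) and those with $R_{g_i}$ noncentral. This yields a direct product factorization $G=H_0\times K_0$ and a graded tensor product decomposition $R=R_{H_0}\otimes R_{K_0}$, where $R_{H_0}$ is a commutative $H_0$-graded division algebra and $K_0$ is an abelian $2$-group in which no generator component of $R_{K_0}$ is central. Theorem \ref{tcca} applied to $R_{H_0}$ (whose identity component is $\R$) writes it, up to equivalence, as either $\R H_0$ or $\cC(2^{m_0};-1)\otimes\R H_0'$ for some direct factor $H_0'$ of $H_0$.

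Second, for $R_{K_0}$ I would apply the inductive generator-swapping argument given just before the theorem statement to decompose $R_{K_0}$ as a graded tensor product of basic factors whose supports are each either a cyclic $2$-group or a product of two cyclic $2$-groups. Proposition \ref{tcc} identifies the first type as some $\cC(2^m;\pm 1)$, and the explicit presentation constructed just before the theorem identifies each two-generator factor as some $\cD(2^k,2^\ell;\mu,\nu)$ with $k\le\ell$. Since $\cC(2^m;+1)\cong\R\ZZ_{2^m}$, any such factor can be absorbed into a group-algebra tensor factor.

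Finally, I would apply Lemma \ref{lce} iteratively to merge the remaining $\cC(2^{m_i};-1)$ factors: each pair $\cC(2^a;-1)\otimes\cC(2^b;-1)$ with $a\le b$ is equivalent to $\cC(2^a;1)\otimes\cC(2^b;-1)\cong\R\ZZ_{2^a}\otimes\cC(2^b;-1)$, so only one $\cC(2^m;-1)$ factor survives and the remaining ones enlarge the group-algebra part. The main bookkeeping obstacle is checking that each regrouping remains an equivalence of $G$-gradings whose overall support is a direct factor of $G$; this follows because each step either fixes a primary cyclic piece of $G$ or (in the case of Lemma \ref{lce}) replaces a cyclic factor of order $2^m$ by another cyclic factor of the same order via an automorphism of the support. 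Putting the pieces together yields exactly the claimed form.
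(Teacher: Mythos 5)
Your proposal is correct and follows essentially the same route as the paper: the centrality splitting $G=H\times K$ with the odd-order (and other central) generators going to the commutative part, the inductive generator-swapping decomposition of the noncentral $2$-group part into one- and two-generator basic factors identified via Proposition \ref{tcc} and the $\cD(2^k,2^\ell;\mu,\nu)$ presentation, and the consolidation of the $\cC(2^m;-1)$ factors through Lemma \ref{lce} / Theorem \ref{tcca}. The paper compresses all of this into the discussion preceding the theorem statement, but the content is the same.
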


\section{Equivalence classes of graded division algebras with 1-dimensional graded components}\label{ssiso_1}

\subsection{Basic algebras}

The case of basic algebras of the type $\cC(2^m;\eta)$ was discussed earlier in Theorem \ref{tcca}, we now discuss the question of  when $R_1=\cD(k_1,\ell_1;\mu_1,\nu_1)$ is equivalent to $R_2=\cD(k_2,\ell_2;\mu_2,\nu_2)$. Let us set $k_i=2^{r_i},\ell_i=2^{s_i}$, $i=1,2$.

Notice a technical remark.
\begin{lemma}\label{lsimple}
Let $u,v$ be two elements of an algebra $R$ such that $uv=-vu$. Then, for any natural $m,n,p$ we have
\begin{displaymath}
(u^mv^n)^p=u^{mp}v^{np}(-1)^{mn\frac{p(p-1)}{2}}.
\end{displaymath}
In particular, if one of  $m$ or $n$ is even or $p$ is divisible by $4$ then
\begin{displaymath}
(u^mv^n)^p=u^{mp}v^{np}.
\end{displaymath}
\end{lemma}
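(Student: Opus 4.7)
The plan is to first prove a single auxiliary identity and then induct. The key lemma is the anticommutation in bulk: if $uv = -vu$, then for any natural $m,n$ one has
\[
u^m v^n = (-1)^{mn} v^n u^m.
\]
This is immediate from $uv=-vu$ by moving one copy of $u$ past one copy of $v$ at a time: each single transposition contributes a factor of $-1$, and rearranging $u^m v^n$ into $v^n u^m$ requires exactly $mn$ such transpositions. A formal proof is a two-step induction, first on $m$ with $n=1$, then on $n$.

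With this swap identity in hand, I would establish the main formula by induction on $p$. The base case $p=1$ is trivial. For the inductive step, assume $(u^m v^n)^p = u^{mp} v^{np} (-1)^{mn\, p(p-1)/2}$ and compute
\[
(u^m v^n)^{p+1} = u^{mp} v^{np} (-1)^{mn\, p(p-1)/2}\cdot u^m v^n.
\]
Applying the swap identity to move $u^m$ to the left past $v^{np}$ introduces a factor $(-1)^{mn p}$, so the right-hand side becomes $u^{m(p+1)} v^{n(p+1)} (-1)^{mn[p(p-1)/2 + p]}$, and the exponent simplifies to $mn\,p(p+1)/2$, as required.

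For the ``in particular'' clause, I would simply inspect the parity of $mn\, p(p-1)/2$. If either $m$ or $n$ is even, then $mn$ is even and the sign is $+1$. If $4 \mid p$, write $p=4r$; then $p(p-1)/2 = 2r(4r-1)$ is even, so again the sign is $+1$. In both cases the formula collapses to $(u^m v^n)^p = u^{mp}v^{np}$.

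I do not anticipate any real obstacle here: the entire content is a bookkeeping of signs coming from the anticommutation relation, and the only subtlety is getting the triangular number $p(p-1)/2$ to appear correctly when collecting all the $u$'s to the left of all the $v$'s during the inductive step.
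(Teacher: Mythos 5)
Your proof is correct, and it is exactly the sign-bookkeeping argument the paper has in mind: the authors state Lemma \ref{lsimple} as a ``technical remark'' and omit the proof entirely, so your swap identity $u^m v^n = (-1)^{mn} v^n u^m$ followed by induction on $p$ (with the exponent correctly accumulating to the triangular number $p(p+1)/2$ in the inductive step) supplies precisely the missing details. The parity analysis in the ``in particular'' clause is also correct.
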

Back to the equivalence classes,  first of all, notice that a necessary condition for the equivalence is $r_1=r_2$ and $s_1=s_2$. Suppose that $\vp: R_1\to R_2$ is a homomorphism of algebras such that $\vp((R_1)_g)=(R_2)_{\psi(g)}$, where $\psi:G\to G$ is an automorphism of the group $G$. If $G=(a)\times (b)$, $o(a)=2^r$, $o(b)=2^s$, with $r=r_1=r_2$, $s=s_1=s_2$, then $\psi(a)=a^{\alpha_{11}}b^{\alpha_{12}}$, $\psi(b)=a^{\alpha_{21}}b^{\alpha_{22}}$. If we take generators $x_i\in (R_i)_{a_i}$, $y_i\in (R_i)_{b_i}$, then $\vp(x_1)=c x_2^{\alpha_{11}}y_2^{\alpha_{12}}$, $\vp(y_1)=dx_2^{\alpha_{21}}y_2^{\alpha_{22}}$. As noted earlier, $c,d=\pm 1$. Let us see, for what values of $\mu_2,\nu_2$ we can have such equations possible. We must have $\vp(x_1)^{2^r}=\mu_1 I$, $\vp(y_1)^{2^s}=\nu_1 I$, $\vp(x_1)\vp(y_1)=-\vp(y_1)\vp(x_1)$. So, we must have

\begin{displaymath}
(c x_2^{\alpha_{11}}y_2^{\alpha_{12}})^{2^r}=(x_2^{\alpha_{11}}y_2^{\alpha_{12}})^{2^r}=\mu_1\cdot I,\,(d x_2^{\alpha_{21}}y_2^{\alpha_{22}})^{2^s}=( x_2^{\alpha_{21}}y_2^{\alpha_{22}})^{2^s}=\nu_1\cdot I
\end{displaymath}
 and
\begin{displaymath}
(x_2^{\alpha_{11}}y_2^{\alpha_{12}})(x_2^{\alpha_{21}}y_2^{\alpha_{22}})=-(x_2^{\alpha_{21}}y_2^{\alpha_{22}})(x_2^{\alpha_{11}}y_2^{\alpha_{12}}).
\end{displaymath}
Performing computations, we find
\begin{displaymath}
x_2^{2^r\alpha_{11}}y_2^{2^r\alpha_{12}}(-1)^{\alpha_{11}\alpha_{12}\frac{2^r(2^{r}-1)}{2}}=\mu_1\cdot I,
\end{displaymath}
\begin{displaymath}
x_2^{2^s\alpha_{21}}y_2^{2^s\alpha_{22}}(-1)^{\alpha_{21}\alpha_{22}\frac{2^s(2^{s}-1)}{2}}=\nu_1\cdot I;
\end{displaymath}
\begin{displaymath}
x_2^{\alpha_{11}+\alpha_{21}}y_2^{\alpha_{12}+\alpha_{22}}(-1)^{\alpha_{12}\alpha_{21}}=-x_2^{\alpha_{11}+\alpha_{21}}y_2^{\alpha_{12}+\alpha_{22}}(-1)^{\alpha_{22}\alpha_{11}}.
\end{displaymath}
It follows from the last equation that the determinant $\Delta=\alpha_{11}\alpha_{22}-\alpha_{12}\alpha_{21}$ is an odd number. It follows from the second equation that
\begin{displaymath}
\nu_1=\mu_2^{2^{s-r}\alpha_{21}}\nu_2^{\alpha_{22}}(-1)^{\alpha_{21}\alpha_{22}2^{s-1}}.
\end{displaymath}
It follows from the first equation that $\alpha_{12}=2^{s-r}\beta_{12}$, for some integral $\beta_{12}$, and then
\begin{displaymath}
\mu_1=\mu_2^{\alpha_{11}}\nu_2^{\beta_{12}}(-1)^{\alpha_{11}\alpha_{12}2^{r-1}}.
\end{displaymath}

Let us first assume that $r<s$. In this case, the numbers $2^{s-1}$,  $2^{s-r}$ and $\alpha_{12}=2^{s-r}\beta_{12}$ are even. Since $\alpha_{11}\alpha_{22}-\alpha_{12}\alpha_{21}$ is an odd number, it also follows that $\alpha_{11}$ and $\alpha_{22}$ are odd numbers. As a result, our two latter equations simplify and give $\nu_1=\nu_2$ and $\mu_1=\mu_2\nu_2^{\beta_{12}}$ . Since $\beta_{12}$ can be both even or odd, this tells us that, in this case, there are three equivalence classes:
\begin{displaymath}
\{\cD(2^r; 2^s,1;1)\}, \;\{\cD(2^r, 2^s;-1,1)\},\;\{\cD(2^r; 2^s;-1,-1), \cD(2^r, 2^s;1,-1)\}.
\end{displaymath}

The next case is $2\le r=s$. In this case,
$2^{r-1}$ and $2^{s-1}$ are even, $\alpha_{12}=\beta_{12}$. So we have
\begin{displaymath}
\mu_1=\mu_2^{\alpha_{11}}\nu_2^{\alpha_{12}},\;\nu_1=\mu_2^{\alpha_{21}}\nu_2^{\alpha_{22}}.
\end{displaymath}
Clearly, the values in this equation depend only on the residue classes of $\alpha_{ij}$ $\mod 2$. Recalling that $\alpha_{11}\alpha_{22}-\alpha_{12}\alpha_{21}$ is an odd number, we have 6 options (6 being the number of nonsingular $2\times 2$-matrices over $\z$),  for the residue classes, hence for possible pairs: $\mu_1=\mu_2, \nu_1=\nu_2$; $\mu_1=\nu_2, \nu_1=\mu_2$; $\mu_1=\mu_2\nu_2, \nu_1=\nu_2$; $\mu_1=\mu_2, \nu_1=\mu_2\nu_2$; $\mu_1=\mu_2\nu_2, \nu_1=\mu_2$, and $\mu_1=\nu_2, \nu_1=\mu_2\nu_2$. 

As a result, we have two equivalence classes:
\begin{displaymath}
\{\cD(2^r, 2^r;1,1)\},\;\{\cD(2^r, 2^r;-1,-1), \cD(2^r, 2^r;1,-1),\cD(2^r, 2^r;-1,1)\}.
\end{displaymath}

One could also use the fact that there are only two orbits of the natural action of $\GL_2(\ZZ_2)$ on $\ZZ_2^2$.

Finally, the case $1=r=s$ is the case of Clifford algebras (see \cite{BZGDS}). In this case we have two classes:
\begin{displaymath}
\{\H^{(4)}\cong \cD(2, 2;-1,-1)\},\;\{M_2^{(4)}\cong\cD(2, 2;1,1), \cD(2, 2;1,-1),\cD(2, 2;-1,1)\}.
\end{displaymath}

\begin{proposition}\label{psingle_1} The following are the equivalence classes of algebras of the form $\cD(2^r, 2^s;\mu,\nu)$
\begin{enumerate}
\item$ \{\H^{(4)}\cong \cD(2, 2;-1,-1)\}$;
\item $\{M_2^{(4)}\cong\cD(2, 2;1,1), \cD(2, 2;1,-1),\cD(2, 2;-1,1)\}$;
\item $\{\cD(2^r, 2^r;1,1)\}$, where $r>1$;
\item $\{\cD(2^r, 2^r;-1,-1), \cD(2^r, 2^r;1,-1),\cD(2^r, 2^r,-1,1)\}$, where $r>1$;
\item $\{\cD(2^r, 2^s;1,1)\}$, where $1\le r<s$;
\item $\{\cD(2^r, 2^s;-1,1)\}$, where $1\le r<s$;
\item $\{\cD(2^r, 2^s;-1,-1), \cD(2^r, 2^s;1,-1)\}$, where $1\le r<s$.
\end{enumerate}
\end{proposition}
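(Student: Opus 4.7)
The plan is to read off the proof directly from the case analysis already carried out in the paragraphs preceding the statement: the matrix calculation with $\psi(a)=a^{\alpha_{11}}b^{\alpha_{12}}$, $\psi(b)=a^{\alpha_{21}}b^{\alpha_{22}}$, combined with Lemma \ref{lsimple}, expresses each possible equivalence $\cD(2^r,2^s;\mu_1,\nu_1)\to\cD(2^r,2^s;\mu_2,\nu_2)$ in terms of the three numerical constraints
\begin{equation*}
\Delta=\alpha_{11}\alpha_{22}-\alpha_{12}\alpha_{21}\text{ odd},\quad
\mu_1=\mu_2^{\alpha_{11}}\nu_2^{\beta_{12}}(-1)^{\alpha_{11}\alpha_{12}2^{r-1}},\quad
\nu_1=\mu_2^{2^{s-r}\alpha_{21}}\nu_2^{\alpha_{22}}(-1)^{\alpha_{21}\alpha_{22}2^{s-1}},
\end{equation*}
where $\alpha_{12}=2^{s-r}\beta_{12}$. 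First I would split into the three regimes $r<s$, $r=s\ge 2$, and $r=s=1$, and in each regime reduce these equations modulo 2 to determine which pairs $(\mu_1,\nu_1)$ and $(\mu_2,\nu_2)$ can occur together. This gives the partition into the seven classes listed in the statement.

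Next I would verify the \emph{if} direction, that algebras in a common class really are equivalent, by exhibiting explicit automorphisms $\psi\in \Aut G$ together with explicit graded isomorphisms $\vp$. For $r<s$, the class containing both $\cD(2^r,2^s;1,-1)$ and $\cD(2^r,2^s;-1,-1)$ is realized by the substitution $x_1\mapsto x_2y_2^{2^{s-r-1}}$, $y_1\mapsto y_2$ (so $\beta_{12}=1$, $\alpha_{11}=\alpha_{22}=1$, $\alpha_{21}=0$); here Lemma \ref{lsimple} gives $(x_2y_2^{2^{s-r-1}})^{2^r}=x_2^{2^r}y_2^{2^{s-1}}=\mu_2\cdot(-1)$, turning $\mu_2=\pm 1$ into $\mu_1=\mp 1$ when $\nu_2=-1$. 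The cases $r=s$ are handled analogously, using matrices such as $\matr{1&0\\1&1}$ (so $\mu_1=\mu_2$, $\nu_1=\mu_2\nu_2$) and $\matr{0&1\\1&0}$ (swap $\mu,\nu$), which together generate $\GL_2(\ZZ_2)$ and so realize every pair within the stated orbits. The Clifford case $r=s=1$ is already recorded in \eqref{ehm}.

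For the \emph{only if} direction (distinctness of the classes), the computations above already show that any equivalence forces $(\mu_1,\nu_1)$ to lie in the appropriate orbit. In the case $r<s$ the exponent $\alpha_{12}=2^{s-r}\beta_{12}$ is even and $\alpha_{11},\alpha_{22}$ are odd, so the formulas collapse to $\nu_1=\nu_2$ and $\mu_1=\mu_2\nu_2^{\beta_{12}}$, which partitions the four possible pairs $(\mu,\nu)$ into exactly the three classes (5), (6), (7). In the case $r=s\ge 2$ both signs $(-1)^{\alpha_{ii}\alpha_{ij}2^{r-1}}$ vanish, so $(\mu_1,\nu_1)$ is obtained from $(\mu_2,\nu_2)$ by the linear action of $\GL_2(\ZZ_2)$ on $\{\pm 1\}^2\cong \ZZ_2^2$, whose orbits are $\{(1,1)\}$ and $\{(-1,-1),(-1,1),(1,-1)\}$, yielding classes (3) and (4). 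The Clifford case is the previously known split between $\H^{(4)}$ and $M_2^{(4)}$. Necessary dimensional prerequisites $r_1=r_2$, $s_1=s_2$ for equivalence are automatic since the underlying support group is an invariant of the grading.

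The main obstacle is bookkeeping: one has to be careful with the parity of $2^{r-1}$ and $2^{s-1}$ (which is why $r=s=1$ behaves differently from $r=s\ge 2$), and with the fact that $\alpha_{12}$ must be divisible by $2^{s-r}$ for $\vp(x_1)^{2^r}$ to land in $R_e$. Once these divisibility/parity conditions are tracked, the seven classes drop out of the arithmetic of the constraint equations, and no genuinely new ideas beyond Lemma \ref{lsimple} and the already-performed calculation are needed.
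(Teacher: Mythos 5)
Your proposal follows essentially the same route as the paper: the same constraint equations derived from $\psi(a)=a^{\alpha_{11}}b^{\alpha_{12}}$, $\psi(b)=a^{\alpha_{21}}b^{\alpha_{22}}$ via Lemma \ref{lsimple}, the same three regimes $r<s$, $r=s\ge 2$, $r=s=1$, and the same reduction modulo $2$ (the paper likewise invokes the two orbits of $\GL_2(\ZZ_2)$ on $\ZZ_2^2$ in the equal-degree case). The one concrete defect is your explicit witness for the ``if'' direction when $r<s$: the substitution $x_1\mapsto x_2y_2^{2^{s-r-1}}$ is not admissible, since $ab^{2^{s-r-1}}$ has order $2^{r+1}\ne 2^r$ (so $\psi$ is not an automorphism of $G$) and $(x_2y_2^{2^{s-r-1}})^{2^r}=x_2^{2^r}y_2^{2^{s-1}}$ lies in the component of degree $b^{2^{s-1}}\ne e$, not in $R_e$; you have conflated $y_2^{2^{s-1}}$ with $\nu_2=y_2^{2^s}$. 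The correct choice, consistent with your own constraint $\alpha_{12}=2^{s-r}\beta_{12}$, is $x_1\mapsto x_2y_2^{2^{s-r}}$ (i.e.\ $\beta_{12}=1$), which gives $(x_2y_2^{2^{s-r}})^{2^r}=x_2^{2^r}y_2^{2^s}=\mu_2\nu_2\cdot I$ and realizes the merger $\cD(2^r,2^s;-1,-1)\sim\cD(2^r,2^s;1,-1)$. With that exponent corrected, the rest of your argument (including the observation that any solution of the constraint system yields an actual equivalence because the algebras are presented by generators and relations) matches the paper's proof and is sound.
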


\subsection{Tensor products of basic algebras. Commutation relations} A much more complicated case is the equivalence of the \textit{tensor products} of  algebras of the type $\cD(k,\ell;\mu,\nu)$ or $\cC(m;\eta)$. It was mentioned in \cite{BZGDS} that $\H^{(4)}\ot\H^{(4)}$ is equivalent to $M_2^{(4)}\ot M_2^{(4)}$. In other words, $\cD(2, 2;-1,-1)\ot  \cD(2, 2;-1,-1)$ is equivalent to $\cD(2, 2;1,1)\ot  \cD(2, 2;1,1)$. Also, $\H^{(4)}\ot\C^{(2)}$ is equivalent to $M_2^{(4)}\ot\C^{(2)}$. In other words, $\cD(2, 2;-1,-1)\ot  \cC(2; -1)$ is equivalent to $\cD(2, 2;1,1)\ot  \cC(2; -1)$. Thus, an additional work is necessary to determine when two tensor products of gradings mentioned in Theorem \ref{tonedim} are equivalent to each other.

\medskip

Before we proceed with our classification, let us remark that if
 \begin{displaymath}
G=G_1\times\cdots\times G_k
\end{displaymath}
is the factorization of the group $G$ through its Sylow subgroups, then
 \begin{displaymath}
R\cong R_{G_1}\ot\cdots\ot R_{G_k}.
\end{displaymath}
Moreover, if $R\sim R'$, where
 \begin{displaymath}
R'\cong R_{G_1}'\ot\cdots\ot R_{G_k}',
\end{displaymath}
then $R_{G_1}\sim R_{G_1}',\ld,R_{G_k}\sim R_{G_k}'$. At the same time, if the order of $G_i$ is odd then $R_{G_i}\sim \R G_i$. \textit{This enables us to restrict ourselves to the case, where the support of the grading is an abelian 2-group.}
Now each tensor product
\begin{equation}\label{tpDD}
R=\cD(2^{k_1},2^{\ell_1};\mu_1,\nu_1)\ot\cdots\ot\cD(2^{k_s},2^{\ell_s};\mu_s,\nu_s)\ot \cC(2^{m_1};\eta_1)\ot\cdots\ot\cC(2^{m_t};\eta_t)
\end{equation}
gives rise to a sequence 
\begin{equation}\label{ech}
\chi=\{(k_1,\ell_1;\mu_1,\nu_1),\ldots,(k_s,\ell_s;\mu_s,\nu_s), (m_1;\eta_1),\ldots,(m_t;\eta_t)\}. 
\end{equation}
Such sequence will be called the \textit{characteristic} of $R$ and denoted by $\chi (R)$.
We will also define the \textit{truncated characteristic} of $R $ by setting
\begin{equation}\label{etch}
\bch (R)=\{(k_1,\ell_1),\ld,(k_s,\ell_s),m_1,\ld,m_t\}. 
\end{equation}

Since permuting  the components in the characteristic does not change the equivalence class of a related graded division algebra, the equality of two characteristics will be always understood up to a permutation of its tuples. 

\begin{proposition}\label{pchar_1}
If $R_1$ and $R_2$ are equivalent graded division algebras of the form (\ref{tpDD}), then $\bch(R_1)=\bch(R_2)$.
\end{proposition}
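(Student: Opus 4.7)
The plan is to recognize the truncated characteristic as an invariant of the pair $(G,\beta)$, where $G$ is the support of the grading and $\beta\colon G\times G\to\{\pm 1\}$ is the commutation bicharacter defined by $xy=\beta(a,b)yx$ for nonzero $x\in R_a$, $y\in R_b$. Because any algebra of the form (\ref{tpDD}) has one-dimensional homogeneous components, the scalar $\beta(a,b)$ is independent of the choices of $x$ and $y$. Moreover, from $x^{\mathrm{ord}(a)}\in R_e=\R I$ one deduces $\beta(a,b)^{\mathrm{ord}(a)}=1$ and hence $\beta(a,b)\in\{\pm 1\}$; it is then straightforward that $\beta$ is a symmetric bimultiplicative map with $\beta(a,a)=1$. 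Arguing as in Theorem~\ref{Pauli}, an equivalence $\vp\colon R_1\to R_2$ is accompanied by a group isomorphism $\alpha\colon G_1\to G_2$ which transports $\beta_1$ to $\beta_2$, so the isomorphism class of $(G,\beta)$ is an invariant of equivalence.

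Next I would read the truncated characteristic off the pair $(G,\beta)$. In the decomposition (\ref{tpDD}), each $\cD(2^{k_i},2^{\ell_i};\mu_i,\nu_i)$ contributes to $G$ a direct factor $(g_1^{(i)})_{2^{k_i}}\times(g_2^{(i)})_{2^{\ell_i}}$ on which $\beta(g_1^{(i)},g_2^{(i)})=-1$, while each $\cC(2^{m_j};\eta_j)$ contributes a cyclic direct factor $(h_j)_{2^{m_j}}$ sitting inside the radical
\[
G^\perp:=\{g\in G:\beta(g,h)=1\text{ for all }h\in G\}.
\]
Since $\beta(g^2,h)=\beta(g,h)^2=1$ we have $G^\perp\supseteq 2G$ (where $2G:=\{g^2:g\in G\}$), and a direct computation from the decomposition gives
\[
G/G^\perp\cong(\ZZ/2)^{2s},\qquad G^\perp/2G\cong(\ZZ/2)^{t},
\]
so the numbers $s$ and $t$ are already forced by $(G,\beta)$.

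To recover the individual sizes $(2^{k_i},2^{\ell_i})$ and $2^{m_j}$, I would argue by induction on $|G|$, splitting off one indecomposable summand of $(G,\beta)$ at a time. If $G^\perp\not\subseteq 2G$, choose $h\in G^\perp\setminus 2G$ of maximal order; by the standard splitting criterion for abelian $2$-groups (an element of maximal order outside $2G$ generates a cyclic direct summand), $(h)_{\mathrm{ord}(h)}$ is a direct summand of $G$, and since $h\in G^\perp$ it is $\beta$-orthogonal to any group-theoretic complement, so this summand must correspond to a $\cC$-block. Otherwise $G^\perp=2G$ and $\beta$ is ``non-degenerate modulo squares''; in that case choose a pair $g_1,g_2$ of suitably maximal orders with $\beta(g_1,g_2)=-1$, verify that $(g_1)\times(g_2)$ is a direct summand of $G$ with a non-degenerate restricted $\beta$, and peel it off as a $\cD$-block. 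In either case the complement is a pair $(G',\beta')$ coming from an algebra of the form (\ref{tpDD}) of strictly smaller dimension, and induction closes the argument. The main obstacle is the splitting step itself: ensuring that maximal-order elements really do produce direct summands of $G$ that are simultaneously orthogonal complements for $\beta$. This is the $\{\pm 1\}$-valued analogue of the Witt decomposition for bimultiplicative symmetric forms on finite abelian $2$-groups, and while standard it requires some care in the mixed-order case.
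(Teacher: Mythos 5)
Your reduction to the pair $(G,\beta)$ is sound and in fact coincides with the opening of the paper's own argument: an equivalence transports the commutation bicharacter, and the two main invariants the paper uses --- the grading group $G$ and the support of the centre, which is exactly the radical $G^\perp$ --- are read off from $(G,\beta)$. Your computation of $s$ and $t$ from $G/G^\perp$ and $G^\perp/2G$ is also correct. The gap is precisely where you locate it, and as written it is fatal: the splitting step fails. An element of maximal order in $G^\perp\setminus 2G$ need not generate a direct summand of $G$, because it need not have maximal order in $G$ and the cyclic subgroup it generates need not be pure. Concretely, take $R=\cD(2,8;1,1)\ot\cC(2;1)$, so that $G=(a)_2\times(b)_8\times(c)_2$ with $\beta(a,b)=-1$ and $c$ central. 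Then $G^\perp=(b^2)\times(c)$ and $2G=(b^2)$, so the elements of maximal order in $G^\perp\setminus 2G$ are $b^2c$ and $b^6c$, of order $4$; but $(b^2c)$ is not a direct summand of $G\cong\ZZ_2\times\ZZ_8\times\ZZ_2$ (this group has no $\ZZ_4$ direct summand), and the actual $\cC$-block has order $2$, not $4$. Thus your procedure asserts a false splitting and would output the wrong block shape. The $\cD$-peeling step (``a pair of suitably maximal orders'') conceals the same difficulty in a harder form: $\cD(2,8)\ot\cD(4,4)$ and $\cD(2,4)\ot\cD(4,8)$ have isomorphic grading groups, isomorphic radicals $G^\perp$, and satisfy $G^\perp=2G$ in both cases, yet their truncated characteristics $\{(1,3),(2,2)\}$ and $\{(1,2),(2,3)\}$ differ; so no choice of anticommuting pair based only on orders and on the coarse invariants you have listed can be shown to recover the correct multiset of pairs without additional, filtration-sensitive information.

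The paper avoids proving uniqueness of a Witt-type orthogonal decomposition altogether. Instead it extracts three computable invariants of the equivalence class --- the truncated characteristic of the subalgebra supported on $G_{m-1}$ (the elements of order dividing $2^{m-1}$), handled by induction; the elementary-divisor type of $G$; and the elementary-divisor type of the support $G^\perp$ of the centre --- and shows that the resulting linear systems (\ref{echar1}) and (\ref{echar2}) force all multiplicities $k_{ij}$ and $\ell_j$. To salvage your route you would have to either prove the uniqueness of the decomposition of $(G,\beta)$ into $\cD$- and $\cC$-blocks directly, choosing splitting elements compatibly with the filtration by the subgroups $G_k$ (this is genuinely more delicate than the classical maximal-order splitting lemma), or fall back on an invariant-counting argument of the paper's kind.
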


\begin{proof}
Without  loss of generality, we may assume that both $R_1$ and $R_2$ are graded by the same universal abelian 2-group $G$. In other words, as real vector spaces, $R_1$ and $R_2$ are isomorphic to the group algebra $\R G$. Let $\{ X_g\:|\:g\in G\}$ be a basis of this vector space and $\beta_1$, respectively, $\beta_2$ the commutation factors for $R_1$, respectively $R_2$. That is, $X_gX_h=\beta_i(g,h)X_hX_g$ in $R_i$, $i=1,2$.  Clearly then that $\beta_1$ and $\beta_2$ take values  $\pm 1$ only. Moreover, $\beta_i(g,g)=1$, for all $g\in G$.
Now let $\vp:R_1\to R_2$ be an algebra isomorphism and $\alpha:G\to G$ a group automorphism such that $\vp((R_1)_g)= (R_2)_{\alpha(g)}$. As previously, we must have $\beta_2(\alpha(g),\alpha(h))=\beta_1(g,h)$, for all $g,h\in G$. In other words, $\beta_1$ and $\beta_2$ must be in the same orbit under the natural action of $\Aut G$ on the group of (skew)symmetric bicharacters of $G$.

In the case where $G$ is an elementary abelian 2-group, one can view $G$ as a vector space over $\z$ and a commutation factor $\beta(g,h)=(-1)^{\gamma(g,h)}$ where $\gamma$ is a (skew)symmetric bilinear function with values in $\z$. Also, $\alpha$ is a linear transformation of $G$, which naturally acts on bilinear functions. So a necessary condition for $R_1$ to be equivalent to $R_2$ is that $\rank \gamma_1=\rank\gamma_2$, where $\beta_i(g,h)=(-1)^{\gamma_i(g,h)}$, $i=1,2$. In this case, both  $\bch(R_1)$ and $\bch(R_2)$ have the form of $\{\underbrace{(1,1),\ld,(1,1)}_m,1,\ld,1)\}$, where $2m=\rank \gamma_1=\rank\gamma_2$.

In the case where $G$ is not an elementary abelian 2-group, we denote by $G^2$ the subgroup of the squares of the elements in $G$. Clearly, $\beta_i(g,G^2)=1$, $i=1,2$. This allows one to define bicharacters $\beta_i^1: G/G^2\times G/G^2\to \{\pm 1\}$ by setting $\beta_i^1(gG^2,hG^2)= \beta_i(g,h)$. If $\alpha\circ\beta_1=\beta_2$ then $\alpha^1\circ\beta_1^1=\beta_2^1$, where $\alpha^1(gG^2)=\alpha(g)G^2$. This completely defines the total number of pairs in $\bch(R_i)$.   

To obtain a more precise information,  denote by $G_k$ the subgroup of the elements of $G$ whose order divides $2^k$. Notice that for any $k$, $G_k$ is invariant under the automorphisms of $G$. Let $m$ be the least number such that $G=G_m$. We now can proceed with the proof of our proposition by induction on $m$. If $m=1$ then $G$ is elementary abelian and we are done. If $m>1$,we consider $G_{m-1}$. The subalgebras $R_1'$ of $R_1$ and $R_2'$ of $R_2$, whose support is $G_{m-1}$ are equivalent graded division algebras, the equivalence produced by the restrictions of $\vp$ to $R_1'$ and $\alpha$ to $G_{m-1}$.  The characteristics for $R_1'$ and $R_2'$ are obtained from 
those for $R_1$ and $R_2$, as follows. For instance, if $R=R_1$, $R'=R_1'$, consider (\ref{etch}). To obtain a characteristic for $R'$, we need to replace all entries of $m$ by $m-1$ and replace each $(k_i,m)$ and $(m,m)$ by $k_i,m-1$ and $m-1,m-1$, respectively. By induction, the truncated characteristics obtained in this way for $R_1'$ and $R_2'$ are the same, up to a possible permutation. That is, the set of pairs where the entries are less than $m$ must be the same, up to permutation, in $\bch(R_1)$ and $\bch(R_2)$. 

Let us assume that $R_1$ has $k_{ij}^{(1)}$ entries of  tensor factors  of the type $ \cD(2^i,2^j;\mu,\nu)$ and $\ell_i^{(1)}$ entries of  tensor factors  of the type $\cC(2^i;\eta)$. The numbers $k_{ij}^{(2)}$ and $\ell_i^{(2)}$ denote similar values for $R_2$. By induction, $k_{ij}^{(1)}=k_{ij}^{(2)}$, if $i,j<m$. Now the supporting grading group $G$ for $R_1$ takes  the two form
\begin{eqnarray*}
G\cong
\ZZ_{2^1}^{k_{11}^{(1)}+\sum_{j=1}^m k_{1j}^{(1)}+\ell_1^{(1)}}\times \ZZ_{2^2}^{k_{22}^{(1)}+\sum_{j=2}^m k_{2j}^{(1)}+\ell_2^{(1)}}\times\cdots\\ \times\ZZ_{2^{m-1}}^{k_{m-1,m-1}^{(1)}+\sum_{j=m-1}^m k_{m-1,j}^{(1)}+\ell_{m-1}^{(1)}}\times \ZZ_{2^m}^{2k_{mm}^{(1)}+\ell_m^{(1)}}.
\end{eqnarray*}

Similarly, 
\begin{eqnarray*}
G\cong
\ZZ_{2^1}^{k_{11}^{(2)}+\sum_{j=1}^m k_{1j}^{(2)}+\ell_1^{(2)}}\times \ZZ_{2^2}^{k_{22}^{(2)}+\sum_{j=2}^m k_{2j}^{(2)}+\ell_2^{(2)}}\times\cdots\\ \times\ZZ_{2^{m-1}}^{k_{m-1,m-1}^{(2)}+\sum_{j=m-1}^m k_{m-1,j}^{(2)}+\ell_{m-1}^{(2)}}\times \ZZ_{2^m}^{2k_{mm}^{(2)}+\ell_m^{(2)}}.
\end{eqnarray*}

So we have $m$ equations:
\begin{eqnarray*}
&&k_{11}^{(1)}+\sum_{j=1}^m k_{1j}^{(1)}+\ell_1^{(1)}=k_{11}^{(2)}+\sum_{j=1}^m k_{1j}^{(2)}+\ell_1^{(2)},\ldots,\\ && k_{m-1,m-1}^{(1)}+\sum_{j=m-1}^m k_{m-1,j}^{(1)}+\ell_{m-1}^{(1)}=k_{m-1,m-1}^{(2)}+\sum_{j=m-1}^m k_{m-1,j}^{(2)}+\ell_{m-1}^{(2)},\\&& 2k_{mm}^{(1)}+\ell_m^{(1)}= 2k_{mm}^{(2)}+\ell_m^{(2)}.
\end{eqnarray*}
However, using the induction hypothesis, we may write
\begin{eqnarray}\label{echar1}
k_{1m}^{(1)}+\ell_1^{(1)}&=&k_{1m}^{(2)}+\ell_1^{(2)},\ldots, k_{m-1,m}^{(1)}+\ell_{m-1}^{(1)}= k_{m-1,m}^{(2)}+\ell_{m-1}^{(2)},\\ 2k_{mm}^{(1)}+\ell_m^{(1)}&=& 2k_{mm}^{(2)}+\ell_m^{(2)}.\nonumber
\end{eqnarray}

Now we are going to compare the supports of the centers of $R_1$ and $R_2$. The centers are graded division algebras moved under equivalences to each other. The contribution of the center of  $\cD(2^k,2^\ell;\mu,\nu)$ with basis $\{x,y\}$ of degrees $a,b$, $(a)\cong\ZZ_{2^k}$, $(b)\cong\ZZ_{2^\ell}$ to the center of $R_1$ is the graded subalgebra generated by $x^2$, $y^2$ with the support $(a^2)\times (b^2)$.  At the same time, the tensor factors  of the type  $\cC(2^k;\eta)$ are central. Thus the support $H$ of the center of $R_1$, which is the same as for $R_2$, takes two following forms
\begin{displaymath}
H\cong
\ZZ_{2^1}^{ 2k_{22}^{(1)}+\sum_{j=2}^m k_{2j}^{(1)}+\ell_1^{(1)}}\times \ZZ_{2^2}^{2k_{33}^{(1)}+\sum_{j=3}^m k_{3j}^{(1)}+\ell_2^{(1)}}\times\cdots\times\ZZ_{2^{m-1}}^{2k_{m,m}^{(1)}+\ell_{m-1}^{(1)}}\times \ZZ_{2^m}^{ \ell_m^{(1)}},
\end{displaymath}
or
\begin{displaymath}
H\cong
\ZZ_{2^1}^{ 2k_{22}^{(2)}+\sum_{j=2}^m k_{2j}^{(2)}+\ell_1^{(2)}}\times \ZZ_{2^2}^{2k_{33}^{(2)}+\sum_{j=3}^m k_{3j}^{(2)}+\ell_2^{(2)}}\times\cdots\times\ZZ_{2^{m-1}}^{2k_{m,m}^{(2)}+\ell_{m-1}^{(2)}}\times \ZZ_{2^m}^{ \ell_m^{(2)}},
\end{displaymath}

So we can equate the exponents of primary cyclic factors in both factorizations. Again, since we proceed by induction on $m$, we arrive at the following equations

\begin{eqnarray}\label{echar2}
k_{2m}^{(1)}+\ell_1^{(1)}&=&k_{2m}^{(2)}+\ell_1^{(2)},\ldots, 2k_{m,m}^{(1)}+\ell_{m-1}^{(1)}= 2k_{m,m}^{(2)}+\ell_{m-1}^{(2)},\\ 
\ell_m^{(1)}&=& \ell_m^{(2)}.\nonumber
\end{eqnarray}

Now we compare  the systems of equation (\ref{echar1}) and (\ref{echar2}).  The last equation of  (\ref{echar2}) reads as $\ell_m^{(1)}=\ell_m^{(2)}$. Then from the last equation in (\ref{echar1}) we get  $k_{mm}^{(1)}=k_{mm}^{(2)}$. Using the second last equation in (\ref{echar2}), we then get $\ell_{m-1}^{(1)}= \ell_{m-1}^{(2)}$. The second last equation in  (\ref{echar1}) yields $k_{m-1,m}^{(1)}= k_{m-1,m}^{(2)}$. Proceeding in the same way, we finally get $k_{ij}^{(1)}=k_{ij}^{(2)}$, for all $1\le i,j\le m$ and  $\ell_j^{(1)}= \ell_j^{(2)}$, for all $1\le j\le m$.

Thus $\bch(R_1)=\bch(R_2)$ and the proof is complete.

\end{proof}

\subsection{Tensor products of basic algebras. Equivalence}\label{sse}

We keep assuming that the grading group of all graded division algebras is a finite abelian 2-group. Note that the statement of Theorem \ref{tonedim} can be made more precise, as follows. If among the generators of an algebra $R$ in  (\ref{tpDD}) there is a central odd generator whose degree is greater or equal than the degrees of all other odd generators then this algebra is equivalent  to an algebra where there is only one odd generator, and this is central. 
This can be done, using Lemma \ref{lce}, when we deal with the  tensor products of basic algebras of the first kind. 

At the same time, the argument of that lemma easily extends to the case where one of the algebras is of the first kind and the second of the second kind. To simplify this and many  further calculations, we use the following notation:
\begin{displaymath}
[\mu k,\nu \ell]=\cD(2^k,2^\ell;\mu,\nu),\: [\eta k]=\cC(2^k;\eta).
\end{displaymath}
For instance, we write $[1,-2]$ instead $\cD(2,4;1,-1)$ or $[-5]$ instead of $\cC(32;-1)$. We will also omit the sign of tensor product when using the above notation. Given the tensor product of two basic algebras $R\ot S$, we will denote by $\{u,v\}\{w,z\}$ the generating set for $R\ot S$, where $\{u,v\}$ is the canonical generating set of $R=\cD(2^k,2^\ell;\mu,\nu)$ and $\{w,z\}$ is the same for $S$. So we have $u^{2^k}=\mu I$, $v^{2^\ell}=\nu I$, $uv=-vu$. As usual, $R$ is naturally graded by $\ZZ_{2^k}\times\ZZ_{2^\ell}$. We have $uv=-vu$, $wz=-zw$ and the elements in $\{u,v\}$ commute with those in $\{w,z\}$. Likewise, if we deal with the tensor product $\cD(2^k,2^\ell;\mu,\nu)\ot\cC(2^m;\eta)=[\mu k,\nu\ell][\eta m]$, the canonical generating set will be $\{u,v\}\{w\}$, with the relations $u^{2^k}=\mu I$, $v^{2^\ell}=\nu I$, $w^{2^m}=\eta I$, $uv=-vu$, $uw=wu$, $vw=wv$.

Let us check, for instance, that $[-k,\ell][-m]\sim[k,\ell][-m]$, provided that $k\le m$. Indeed if the canonical set of generators for $R=[-k,\ell][-m]$ is $\{x,y\}\{z\}$, $x^{2^k}=-I$, $y^{2^\ell}=I$, $z^{2^m}=-I$, $R_a=\R x$, $R_b=\R y$, $R_c=\R z$, $G=(a)_{2^k}\times(b)_{2^k}\times(c)_{2^m}$ then the new generators $x_1=xz^{2^{m-k}}$, $y_1=y$, $z_1=z$ whose gradings  $a_1=ac^{2^{m-k}}$, $b_1=b$ and $c_1=c$, are obtained from $a,b,c$ by an automorphism of the group $G$,  satisfy all the defining relations of $R_1=[k,\ell][-m]$ and thus provide us with a weak isomorphism of  $R_1$ and $R$. 

Thus, one of the cases to be considered in the classification of noncommutative graded division algebras with 1-dimensional components is where the algebras have the form of 
\begin{equation}\label{eCASE_1}
\cD(2^{k_1},2^{\ell_1};1,1)\ot\cdots\ot\cD(2^{k_s},2^{\ell_s};1,1)\ot \cC(2^m;-1)\ot\R H,
\end{equation}
 where $H$ is a direct factor of $G$ and $s,m\ge 1$. If two algebras of this kind are equivalent then the parameter $m$ must be the same because in an algebra with this parameter there is a central homogeneous solution of the equation $x^{2^m}=-1$ but there is no solution of $x^{2^n}=-1$, if $n>m$. Once $m$ is fixed, the isomorphism class of the group $H$ is also fixed. Since the truncated characteristic is an invariant by Proposition \ref{pchar_1}, it follows that $\{(k_1,\ell_1),\ldots,(k_s,\ell_s)\}$ is also uniquely defined, up to permutation.
 
Now if the highest degree of an odd generator in (\ref{tpDD}) does not appear among the central generators, then there is no need to consider central odd generators. The argument is essentially the same as above. For example, let us prove that $R=[k,-\ell][-m]\sim R_1= [k,-\ell][m]$ if $\ell>m$. If $\{x,y\}\{z\}$ is the canonical generating set for $R$. Then the element $y^{2^{\ell-m}}$ is a central element with grading $b^{2^{\ell-m}}$, so switching to $\{x,y\}\{y^{2^{\ell-m}}z\}$, provides us with the desired equivalence. As a result, in this case, we need to deal only with the algebras of the form
\begin{equation}\label{eCASE_2}
\cD(2^{k_1},2^{\ell_1};\mu_1,\nu_1)\ot\cdots\ot\cD(2^{k_s},2^{\ell_s};\mu_s,\nu_s)\ot\R H,
\end{equation}
where $\mu_i,\nu_i=\pm 1$ and $H$ is a direct factor of $G$.

But we are going to show that even these algebras admit a further significant reduction.

\begin{proposition}\label{pmu_1} Let $G$ be a finite abelian 2-group. Then any graded division algebra $R$ with 1-dimensional homogeneous components is equivalent to one of the following types:
\begin{enumerate}
\item[\rm Type 1] Algebras as in (\ref{eCASE_2}) where none of the parameters $\mu_i,\nu_i$ equals $-1$;
\item[\rm Type 2] Algebras as in (\ref{eCASE_1});
\item[\rm Type 3] Algebras as in (\ref{eCASE_2}) where exactly one of the parameters $\mu_i,\nu_i$ equals $-1$ and where there are no factors of the form $\cD(2,2;-1,1)$ or $\cD(2,2;1,-1)$;
\item[\rm Type 4] Algebras of the form $\cD(2,2;-1-1)\ot S$, where $S$ is of the Type 1.
\end{enumerate}
\end{proposition}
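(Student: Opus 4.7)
The plan is to start from a presentation of $R$ provided by Theorem~\ref{tonedim}, and then progressively eliminate ``odd'' parameters (those $\mu_i,\nu_i$ equal to $-1$) via weak isomorphisms, until the algebra lands in one of the four stated forms. A convenient numerical invariant to decrease at each step is the total number $N$ of $-1$ entries among the $\mu_i,\nu_i$ of the characteristic~(\ref{ech}); since by Proposition~\ref{pchar_1} the truncated characteristic $\bch(R)$ is already a weak-equivalence invariant, the only remaining work concerns these signs.

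The core reduction move generalises the proof of Lemma~\ref{lce}. Suppose two distinct tensor blocks contain odd generators $x_1,x_2$ with $x_i^{2^{k_i}}=-I$, and without loss of generality $k_1\le k_2$. Because the blocks are distinct the generators commute, so the substitution $x_1\mapsto x_1':=x_1 x_2^{2^{k_2-k_1}}$ gives $(x_1')^{2^{k_1}}=x_1^{2^{k_1}}x_2^{2^{k_2}}=(-I)(-I)=I$, flipping $\mu_1$ from $-1$ to $+1$. When $k_2>k_1$ the exponent $2^{k_2-k_1}$ is even, hence $x_2^{2^{k_2-k_1}}$ is central in the second block and the tensor factorisation survives intact; when $k_2=k_1$, an additional twist such as $y_2\mapsto y_2y_1$ (whose effect is computable by Lemma~\ref{lsimple}) restores a clean tensor decomposition. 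Variants of the same substitution supply the moves $[-k,\ell][-m]\sim[k,\ell][-m]$ (trading a $\cD$-block odd generator against a commutative odd generator of equal or greater order) and $[k,-\ell][-m]\sim[k,-\ell][m]$ (the reverse trade when $\ell>m$) noted just before the statement. Within a single $\cD$-block I would also invoke Proposition~\ref{psingle_1}, which in particular collapses $\cD(2^r,2^s;-1,-1)\sim\cD(2^r,2^s;1,-1)$ for $r<s$ and $\cD(2^r,2^r;-1,-1)\sim\cD(2^r,2^r;\pm,\mp)$ for $r>1$.

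A case analysis on ``stuck'' configurations should then produce the four types. If $N=0$, the algebra is already of Type~1. If every remaining $-1$ can be pushed into commutative factors, iterated use of Lemma~\ref{lce} absorbs all but one such factor, yielding Type~2. If the moves leave exactly one $-1$ inside a $\cD$-block $\cD(2^k,2^\ell;\pm,\mp)$, the obstructions to removing it are (i) no commutative $\cC(2^m;-1)$ of sufficient order is present, (ii) no partner $\cD$-block has an odd generator of usable degree, and (iii) by Proposition~\ref{psingle_1}, the offending block is not a $\cD(2,2;\pm,\mp)$ (since those single-$-1$ blocks are weakly isomorphic to the two-odd block $\H^{(4)}$ and must be excluded from Type~3, reappearing in Type~4); this is Type~3. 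Finally, $\H^{(4)}=\cD(2,2;-1,-1)$ resists all the moves above because both of its generators are already minimal-order square roots of $-I$ that anticommute; so a single such factor has to be kept intact, leading to Type~4.

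The main obstacle I anticipate is verifying rigorously that the listed configurations are really irreducible: one must check that no $\Aut G$-automorphism combined with rescaling of generators can further decrease $N$. This amounts to the sign-tracking calculation pioneered in the proof of Proposition~\ref{psingle_1}, now distributed over several tensor factors simultaneously, together with homogeneous-solvability criteria for equations such as $x^{2^n}=-I$ of the kind exploited at the end of the proof of Theorem~\ref{tcca}. A secondary bookkeeping obstacle is to confirm that each restructuring move, especially the $y_2\mapsto y_2y_1$ twist in the $k_1=k_2$ case, leaves the algebra in a form of type (\ref{eCASE_1}) or (\ref{eCASE_2}), so that the reduction procedure is well-posed and terminates.
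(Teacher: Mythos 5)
Your overall strategy --- eliminating $-1$ parameters one at a time by generator substitutions that generalize Lemma \ref{lce}, invoking Proposition \ref{psingle_1} inside a single block, and treating $\H^{(4)}$ as the irreducible exception --- is the same as the paper's, which first reduces to the case where all odd generators have the same degree and then removes pairs of odd generators lying in distinct blocks by explicit moves such as $\{u,v\}\{w,z\}\mapsto\{u,vw\}\{w,uz\}$.

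There is, however, one concrete error in your case analysis. You assert that the blocks $\cD(2,2;-1,1)$ and $\cD(2,2;1,-1)$ are ``weakly isomorphic to the two-odd block $\H^{(4)}$'' and therefore ``reappear in Type 4.'' This contradicts Proposition \ref{psingle_1}(2) and (\ref{ehm}), which you yourself cite: these blocks are equivalent to $M_2^{(4)}=\cD(2,2;1,1)$, not to $\H^{(4)}=\cD(2,2;-1,-1)$ (they are even non-isomorphic to $\H^{(4)}$ as ungraded algebras, being $M_2(\R)$ rather than $\H$). Consequently an algebra whose unique $-1$ sits in such a factor collapses into Type 1, not Type 4; that is exactly why those factors are excluded from Type 3 in the statement. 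Type 4 arises only from a genuine $\H^{(4)}$ factor, whose two odd generators cannot be separated because the substitution $x\mapsto xy$ yields $(xy)^2=-I$ again by Lemma \ref{lsimple}; and to guarantee that at most one such factor survives you still need the equivalences $\H^{(4)}\ot\H^{(4)}\sim M_2^{(4)}\ot M_2^{(4)}$ and $[-1,\ell]\ot[-1,-1]\sim[-1,\ell]\ot[1,1]$ for $\ell>1$, which your plan omits. Finally, the ``irreducibility'' you worry about at the end is not part of this proposition, which only claims that every algebra is equivalent to one of the four types; the pairwise inequivalence is established separately afterwards by counting homogeneous solutions of equations $x^{2^k}=\pm 1$.
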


Recall that 
\begin{displaymath}
\cD(2,2;-1-1)\sim\H^{(4)},\; \cD(2,2;-1,1)\sim \cD(2,2;1,-1)\sim \cD(2,2;1,1)\sim M_2^{(4)}.
\end{displaymath}

\begin{proof} To prove this proposition, we need to prove that the number of odd generators in the tensor product of two basic algebras is at most 1, with the exception of Type 4.  The argument just before formula (\ref{eCASE_2}) works also in the case of two or more basic algebras of the second kind. Its outcome is that our algebra is equivalent to an algebra where all odd generators have the same degree.

As a result, we need to consider the product of two basic algebras with odd generators of the same degree $k$. Suppose first that $k>1$. Using Claim (4) in Proposition \ref{psingle_1}, we can assume that if we have a product of two basic algebras of the second kind, each with odd generators, and all generators have the same degree $k$, then, in fact, only one generator in each algebra is odd and the other is even. So we have to consider the (tensor) products $[-k,\ell][-k,m]$, where $k\le\ell\le m$ or $[\ell,-k][m,-k]$, where $\ell\le m\le k$, or $[-k,\ell][m,-k]$, where $m\le k\le\ell$, or $[\ell,-k][-k,m]$, where $\ell\le k\le m$ (this last case is symmetric with  the previous one). In all cases the procedure of reduction to one odd variable is similar. 

For instance, consider $R=[-k,\ell][m,-k]$, where $m\le k\le\ell$. We will  prove that $R\sim R_1=[-k,\ell][m,k]$. Let $\{u,v\}\{w,z\}$ be the canonical set of generators for $R$. Let $u\in R_a$, $v\in R_b$, $w\in R_c$ and $z\in R_d$. Here $o(a)=2^k$, $o(b)=2^\ell$, $o(c)=2^m$ and $o(d)=2^k$. We choose the new generating set $\{u,vw\}\{w,uz\}$ for $R$. Let us check the power relations: $u^{2^k}=-I$, $(vw)^{2^\ell}=(v^{2^\ell})(w^{2^\ell})=I$, because $d(w)\le d(v)$, $w^{2^m}=I$ and $(uz)^{2^k}=(u)^{2^k}(z)^{2^k}=(-I)(-I)=I$. Now we check the commutation relations: $u(vw)=-v(uw)=-v(wu)=-(vw)u$, $w(uz)=w(zu)=-z(wu)=-(zu)w=-(uz)w$. Finally, $uw=wu$, $u(uz)=(uz)u$, $(vw)w=w(vw)$ and  $(vw)(uz)=(vu)(wz)=(-(uv))(-(zw))=(uz)(vw)$. The gradings of the elements of the new generating set are $\{a, bc,c, ad\}$ such that $o(a)=2^k$, $o(bc)=2^\ell$, $o(c)=2^m$ and $o(ad)=2^k$. As a result, we see that $R\sim R_1$. Similar arguments work in the other cases. Thus, in the case $k>1$ the algebra $R$ is equivalent to an algebra of the type Type II.

Now suppose $k=1$. The argument just above works here except that we cannot assume that in each basic algebra only one generator is odd. The exceptional case is $\H^{(4)}=[-1,-1]$. First recall a well-known equivalence $\H^{(4)}\ot\H^{(4)}\sim M_2^{(4)}\ot M_2^{(4)}$. So $[-1,-1][-1,-1]\sim [1,1][1,1]$. One more case is $R=[-1,\ell][-1,-1]$, where $\ell>1$. We claim that in this case, $R\sim R_1=[-1,\ell][1,-1]$.  In this case, if $\{u,v\}\{w,z\}$ is the canonical set of generators for $R$, one has to choose the new generating set $\{u,vz\}\{uw,z\}$. Actually, since $[1,-1]\sim [1,1]$, we may say that $R\sim[-1,\ell][1,1]$. It follows that if one of the factors is $\H^{(4)}$ then the product is not an algebra of Type I or Type II only if it is of Type III.

With this, the proof of Proposition \ref{pmu_1} is now complete.

\end{proof}

\subsection{Tensor products of basic algebras. Nonequivalence}\label{ssne_1}

We know from Proposition \ref{pchar_1} that the truncated characteristics of two algebras with 1-dimensional components are the same. 
  
Our first goal in this section  is to prove  that the algebras of different types in Proposition \ref{pmu_1} are not equivalent.

 In our proofs, we will consider (systems) of equations of the form $x^2=\pm 1$ and compare the number of elements in the supports of the sets of homogeneous solutions. If these numbers for two algebras are different then the algebras are not equivalent. For instance, the following remark is very useful in the forthcoming arguments.

\begin{remark}\label{rpmu_1}
 If $k\ge 2$ then the equation $x^{2^k}=-1$ has no homogeneous solutions in the algebras of  Type 1 in Proposition \ref{pmu_1}. 
\end{remark}
 
The following is an easy exercise from the domain of Clifford algebras. 
 
\begin{lemma}\label{lpmu_1}
Let us denote by $d_{\pm}^m$ the number of elements in the support of the set of homogeneous solutions for $x^2=\pm 1$ in $[1,1]^{\ot m}$, $m\ge 1$.  We also set $d_+^0=1$ and $d_-^0=0$. Then, for all $m\ge 0$, we have
\begin{eqnarray}\label{e_Clifford}
d_+^{m+1}=3d_+^m+d_-^m, \;& d_-^{m+1}=d_+^m+3d_-^m\\
d_+^{m}=\frac{4^m+2^m}{2},\;& d_-^{m}=\dfrac{4^m-2^m}{2}
\end{eqnarray}
In particular, for each $m>0$, one of $d_+^m$, $d_-^m$ is congruent $0\!\! \mod 3$ while the other is congruent $1\!\! \mod 3$.  Also, $d_+^m > d_-^m$, for all $m\ge 0$. $\square$
\end{lemma}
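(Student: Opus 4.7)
The plan is to view $[1,1]^{\otimes (m+1)}$ as the tensor product of algebras $[1,1] \otimes [1,1]^{\otimes m}$ and read off the recursion from how squaring interacts with this tensor product; once the recursion is in place, the closed form follows by a short induction, and the mod-$3$ and monotonicity assertions drop out of the closed form.

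First I would dispatch the base case. In $[1,1] = \cD(2,2;1,1)$ the four one-dimensional graded components are spanned by $I, u, v, uv$; using $u^2 = v^2 = I$ and $uv = -vu$, one gets $(uv)^2 = -I$, whence $d_+^1 = 3$ and $d_-^1 = 1$. I would also record the general observation that in $R = [1,1]^{\otimes m}$ each component $R_g$ is one-dimensional with $R_g^2 \subset R_e = \R I$, so the sign of $x_g^2$ depends only on $g$; this makes $d_\pm^m$ well-defined and yields $d_+^m + d_-^m = 4^m$.

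The key step is the recursion. Any homogeneous component of $[1,1] \otimes [1,1]^{\otimes m}$ is spanned by a single elementary tensor $x \otimes y$, and because the two tensor factors commute in the (ungraded) product, $(x \otimes y)^2 = x^2 \otimes y^2$. Identifying $I \otimes I$ with the identity of the tensor product, this square equals $+I$ precisely when $x^2$ and $y^2$ carry the same sign and $-I$ when they carry opposite signs. Counting pairs by sign then gives
\[
d_+^{m+1} = 3 d_+^m + d_-^m, \qquad d_-^{m+1} = d_+^m + 3 d_-^m.
\]

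With the recursion established, the closed forms $d_\pm^m = (4^m \pm 2^m)/2$ follow by a short induction from the base case $(d_+^0, d_-^0) = (1,0)$; one may also view them as the eigenvector decomposition associated to the eigenvalues $4$ and $2$ of the transition matrix. Reducing modulo $3$ gives $4^m \equiv 1$ and $2^m \equiv (-1)^m$, so exactly one of $d_+^m, d_-^m$ is $\equiv 0$ and the other $\equiv 1$ modulo $3$ (with the roles swapping according to the parity of $m$); the inequality $d_+^m > d_-^m$ is immediate from $d_+^m - d_-^m = 2^m > 0$. I do not foresee any genuine obstacle; the only point that deserves a moment of care is checking that the squaring rule in the tensor product is the ungraded one (no sign twist), which is the convention used throughout this paper.
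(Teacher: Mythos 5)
Your proof is correct. The paper itself gives no proof of this lemma (it is explicitly left as ``an easy exercise from the domain of Clifford algebras,'' hence the $\square$ in the statement), and your argument is exactly the intended one: the base count $d_+^1=3$, $d_-^1=1$ in $\cD(2,2;1,1)$, the sign-multiplicativity of squaring across the (ordinary, untwisted) tensor product giving the recursion, and the closed form, mod-$3$ statement, and inequality following from $4^m\equiv 1$, $2^m\equiv(-1)^m \pmod 3$ and $d_+^m-d_-^m=2^m$. Your parenthetical care about the absence of a Koszul sign twist is well placed and consistent with the paper's conventions in Section \ref{sstp}.
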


By Proposition \ref{pmu_1}, we may assume that, in the characteristics of the algebras with 1-dimensional homogeneous components, that is, of the form (\ref{tpDD}), at most one parameter $\{\mu_1,\nu_1,\ld,\mu_s,\nu_s\}$ is $-1$, with the exception of $\cD(2,2;-1,-1)\ot S\sim\H^{(4)}\ot S$, where $\chi(S)$ has no $-1$ among these parameters.
 
For the statement of our results, we single out the algebras of the form
\begin{equation}\label{eCASE_2_1}
\cD(2^{k_1},2^{\ell_1};\mu_1,\nu_1)\ot\cdots\ot\cD(2^{k_s},2^{\ell_s};\mu_s,\nu_s).
\end{equation}. 
 
 If  $R$ is as in (\ref{eCASE_2_1}) and $\chi=\chi(R)$, we write $R=\cD(\chi)$. If $\chi$ has no $-1$ among the parameters $\mu_i,\nu_i$, we call $\chi$ \textit{even}. If $\chi$ has exactly one $-1$ then we call it \textit{odd}. Because of the equivalence (\ref {ehm}), the odd characteristics containing $(1,1;-1,1)$ or $(1,1;1,-1)$ are still called even.

Algebras of Type 1 are completely separated from each other by the parameter $\bch$, hence also by $\chi$.

\subsection{Case of odd generators of degree greater than 2}\label{sss2}

We first consider the case where we compare two algebras, as in (\ref{tpDD}), one of which satisfies an additional condition that there is an odd generator of degree greater than 2. This algebra cannot be an algebra of Type 1, as mentioned in Remark \ref{rpmu_1}.

\textbf{Step I.} Two  algebras are not equivalent if, say, in $R_1$ there is an odd generator of degree $2^k$, with $k\ge 2$ and in $R_2$ the degree of any odd generator is $2^\ell$, with $\ell<k$. This follows because in $R_1$ there is a homogeneous solution to $x^{2^k}=-1$ whereas in $R_2$ no such solution exist. Indeed, let $y= u_1^{\alpha_1}u_2^{\alpha_2}\cdots u_m^{\alpha_m}$ be such a solution where $u_1,u_2,\ld,u_m$ are some generators of $R_2$. If $u_i$ is an odd generator then $d(u_i)\,|\,2^{k-1}$. Now by Lemma \ref{lsimple},
 \begin{displaymath}
y^{2^k}=u_1^{2^k\alpha_1}u_2^{2^k\alpha_2}\cdots u_m^{2^k\alpha_m}=(u_1^{2^k})^{\alpha_1}(u_2^{2^k})^{\alpha_2}\cdots (u_m^{2^k})^{\alpha_m}=1.
\end{displaymath}

As a result, $R_1\not\sim R_2$. Additionally, if $p_1$ is the maximal number for $R_1$ such that there is a homogeneous $x$ with $x^{2^{p_1}}=-1$ then the similar number $p_2$ for $R_2$ must be the same, which follows because if, for instance $p_1>p_2$, then $x^{2^{p_1}}=-1$ can be solved for homogeneous elements of $R_1$ but not of  $R_2$. It follows that two algebras of Type 2 are equivalent if and only if their characteristics are the same.

Now if $R_1$ is of Type 2, $R_2$ is of Type 3, and $R_1\sim R_2$, both with odd generator of degree $m\ge 2$.  Then in $R_1$ we have a central homogeneous solution of $x^{2^m}=-1$ whereas $R_2$ does not have such solution.

It follows that we only need to consider pairs of algebras $R_1$ and $R_2$ such that $\mu_i^{(1)}=-1$ (or $\nu_i^{(1)}=-1$) in $R_1$, $\mu_j^{(2)}=-1$ (or $\nu_j^{(2)}=-1$) in $R_2$ and $k_i^{(1)}=k_j^{(2)}$ or $\ell_j^{(2)}$ (or $\ell_i^{(1)}=k_j^{(2)}$ or $\ell_j^{(2)}$). The notation is self-explanatory.

\textbf{Step II.} We adopt our notation from Proposition \ref{pmu_1}. We first consider a particular case of the product of two basic algebras. We consider three cases, as below. One easily checks that they expire all possibilities.

\textit{Case} ($a_1$) Let $R_1=[-k,\ell][k,\ell+p]$, $R_2=[k,\ell][-k,\ell+p]$, where $2\le k\le\ell$ and $p\ge 1$. Then, in $R_2$ we have a homogeneous solution of the system of equations $x^{2^k}=y^{2^\ell}=1$, $xy=-yx$. Meanwhile, in $R_1$ this system has no homogeneous solutions. Indeed, let $\{u_1,v_1\}\{u_2,u_2\}$ be the canonical generating set for $R_1$. Let $x=\pm u_1^\alpha v_1^{\alpha'} u_2^\beta v_2^{\beta'}$ is a solution in $R_1$ to the equation  { $x^{2^k}=1$ in $R_1$. Then, using Lemma \ref{lsimple},
\begin{displaymath}
1=u_1^{2^k\alpha} v_1^{2^k\alpha'} u_2^{2^k\beta} v_2^{2^k\beta'}=(-1)^\alpha  v_1^{2^k\alpha'}v_2^{2^k\beta'}
\end{displaymath}
It follows that $\alpha=2\kappa$, $\alpha'=2^{\ell-k}\rho$, $\beta'= 2^{\ell+p-k}\sigma$, hence
\begin{displaymath}
x=\pm u_1^{2\kappa} v_1^{2^{\ell-k}\rho} u_2^\beta v_2^{2^{\ell+p-k}\sigma}.
\end{displaymath}
Since also $y^{2^\ell}=1$, we must have $y=\pm u_1^{\pi'} v_1^{\rho'} u_2^{\beta''} v_2^{2^p\sigma'}$. It follows that $xy=yx$, a contradiction.

\textit{Case} ($a_2$) Here $R_1=[k,-\ell][k+p,\ell]$, $R_2=[k,\ell][k+p,-\ell]$, where $k+p\le\ell$, $p\ge 1$. Again, in $R_2$, there is a solution of the system $x^{2^k}=y^{2^\ell}=1$, $xy=-yx$. When we solve this system in $R_1$, we obtain
\begin{displaymath}
x=u_1^{\alpha}v_1^{2\beta} u_2^{2^p\gamma} v_2^{2\delta}\;, y=u_1^{\alpha'}v_1^{2\beta'} u_2^{\gamma'} v_2^{\delta'},
\end{displaymath}
where we have used the defining relation $v^{2^\ell}=-1$. Again, $xy=yx$, a contradiction.

\textit{Case} ($a_3$) Here $R_1=[-k,\ell][t,k]$, $R_2=[k,\ell][t,-k]$, where $t< k\le\ell$. Now we consider the system $x^{2^t}=y^{2^k}=1$, $xy=-yx$. This is solvable in $R_1$. If there is solution $x,y$ in $R_2$, then
\begin{displaymath}
x=u_1^{2\alpha}v_1^{2\beta} u_2^{\gamma} v_2^{2\delta}\;, y=u_1^{\alpha'}v_1^{2\beta'} u_2^{\gamma'} v_2^{2\delta'}.
\end{displaymath}
As before, we derive that $xy=yx$, a desired contradiction.

\textbf{Step III.} Now let $P$ and $Q$ be two algebras of the form (\ref{tpDD}). Suppose that $P$ is equivalent to $Q$ and $P, Q$  are of Type 3. In this case we can write $P=A\ot R_1$, $Q=A\ot R_2$, where $(R_1,R_2)$ is the pair of algebras considered on the previous Step II (one of the cases ($a_1$),
 ($a_2$), and ($a_3$)), and $A$ is of Type I.

As shown in each of those cases, there are $k,\ell$, where $\ell\ge k\ge  2$, such that the system of equations
\begin{equation}\label{e_plus}
x^{2^k}=y^{2^\ell}=1,\: xy=-yx
\end{equation}
has homogeneous solutions in $R_1$ but not in $R_2$. Let us compute the number of pairs $(g,h)\in G\times G$ such that there exists $x,y$, with $\deg x=g$, $\deg y=h$, satisfying (\ref{e_plus}) in $P$ and in $Q$.

Suppose $x^{2^k}=1$ in $P$. Then we can write $x=\bx\wx$, where $\bx\in A$, $\wx\in R_1$ and $\bx^{2^k}=\wx^{2^k}=1$. Similarly,  we can write $y=\by\wy$, where $\by\in A$, $\wy\in R_1$ and $\by^{2^\ell}=\wy^{2^\ell}=1$.

\begin{remark}\label{r_norm}
\begin{enumerate}
\item  Notice that the condition $xy=-yx$ is equivalent to the disjunction of two conditions: $\bx\,\by=\by\,\bx$ and $\wx\wy=-\wy\wx$ OR $\bx\,\by=-\by\,\bx$ and $\wx\wy=\wy\wx$;
\item If $(\bx\wx)^{2^k}=1$ then $ \bx^{2^k}=\alpha\cdot 1$ and $ \wx^{2^k}=\beta\cdot 1$. Since there are no homogeneous roots of $x^{2^k}=-1$ in $A$, it follows that $\alpha>0$. So an adjustment of $\bx$ and $\wx$ can be done, which allows us to assume $\alpha=\beta=1$.
\end{enumerate}
\end{remark}

Let us denote by $X_0$ the set of homogeneous $2^k$th roots of $1$ in $A$. We choose exactly a half $X=\{ x_1,\ld,x_M\}$ of $X_0$ so that if $a\in X$ then $-a\not\in X$. We also denote by $Y=\{ y_1,\ld,y_N\}$ the set of all homogeneous $2^\ell$th roots of 1 in $A$. Since $\ell\ge 2$, it follows that for any $x_i\in X$ it is true that either $x$ commutes with all elements of $Y$ or $x$ commutes with exactly a half of these elements and anticommutes with the remaining half of them. Indeed, if $x$ anticommutes with $y\in Y$ and commutes with $y_1,\ld,y_s$ then $yy_i\in Y$, for all $i=1,\ld,s$, and $x$ anticommutes with $yy_1,\ld,yy_s$. To check this one can use Lemma \ref{lsimple}.

Now let us write
\begin{displaymath}
X=\{x_1,\ld,x_{M_1}, x_{M_1+1},\ld,x_{M}\},\; M=M_1+M_2,
\end{displaymath}
where $x_1,\ld,x_{M_1}$ centralize $Y$ and each of $ x_{M_1+1},\ld,x_{M_2}$ does not commute with at least one of the elements in $Y$. Then the number of pairs $(g,h)$ in the support of $A$ such that there are elements $x\in A_g$, $y\in A_h$,  satisfying (\ref{e_plus}), equals
\begin{equation}\label{e_a1}
M_2\cdot \frac{N}{2}
\end{equation}
while the number of pairs $(g,h)$ in the support of $A$ such that there are elements $x\in A_g$, $y\in A_h$  satisfying
\begin{equation}\label{e_three_plus}
x^{2^k}=y^{2^\ell}=I,\; x\,y=y\,x
\end{equation}
equals
\begin{equation}\label{e_a2}
M_1N+M_2\cdot\frac{N}{2}=N\left(M_1+\frac{M_2}{2}\right).
\end{equation}

We denote by $a_0$ and $a_1$ the number of pairs $(g,h)\in G\times G$ such that there are $x\in A_g$ and $y\in A_h$ satisfying $x^{2^k}=y^{2^\ell}=1$ and $xy=yx$, $xy=-yx$, respectively.  It follows from (\ref{e_a1}) and (\ref{e_a2}) that $a_0>a_1$.

Now let  us consider a pair of algebras in Case  ($a_1$) of Step II and determine for them all similar values $M(R_i)$, $N(R_i)$, $M_1(R_i)$ and $M_2(R_i)$, where $i=1,2$.

The number of elements in the support of the set of homogeneous solutions of $x^{2^k}=1$ both in $R_1$ and in $R_2$ equals $2^{4k-1}$. Hence $M(R_1)=M(R_2)$.  It is shown in (II, $a_1$), $M_2(R_1)=0$, $M_2(R_2)>0$. 

The number of elements in the support of the set of homogeneous solutions of $y^{2^\ell}=1$ equals $2^{2k+2\ell}$, that is, $N(R_1)=N(R_2)$. If we denote by $b_0$, $b'_0$ the number of elements in the supports of the set of homogeneous solutions of (\ref{e_three_plus}) in $R_1$ and $R_2$ and by $b_1$, $b'_1$ the respective numbers for  (\ref{e_plus}), then, applying (\ref{e_a1})  and  (\ref{e_a2}), we obtain the following.
\begin{displaymath}
 b_1=0, b'_1>0, b_0+b_1=M(R_1)N(R_1)=M(R_2)N(R_2)=b_0'+b_1'.
\end{displaymath}

The total number of elements in the support of the set of homogeneous solutions of (\ref{e_plus}), according to Remark \ref{r_norm}, can be computed, as follows. In $AR_1$ this number equals $a_0b_1+a_1b_0=a_1b_0=a_1(b_0'+b_1')$. In $AR_2$ this number equals $a_0b'_1+a_1b'_0$. Since $a_0>a_1$, it follows that
\begin{displaymath}
a_0b'_1+a_1b'_0>a_1(b'_0+b'_1).
\end{displaymath}
Now we can see that $P=AR_1$ and $Q=AR_2$ are not equivalent.

Similarly, in the cases 
 (II, $a_2$) and (II, $a_3$)
 we have
\begin{displaymath}
M(R_1)=M(R_2),\; N(R_1)=N(R_2),\: M_2(R_1)=0,\;M_2(R_2)>0.
\end{displaymath}

Hence, $b_0+b_1=b_0'+b_1', b_1=0, b_1'\ne 0$ and $a_0b_1'+a_1b_0'>a_0b_1+a_1b_0$.

\medskip

{\sc Conclusion.} \textit{Two algebras $P$ and $Q$, as in (\ref{tpDD}), without odd generators of degree 2, are equivalent if and only if $\chi(P)=\chi(Q)$.}

\subsection{Case of odd generators of degree 2}\label{sss3}

It remains to consider the case of two algebras as  in (\ref{tpDD}), satisfying the following condition 

($\dagger$): if $\mu_i=-1$ (respectively, $\nu_i,\eta_i=-1
$) then $k_i=1$ (respectively, $\ell_i=m_i
=1$). 

Note that an algebra $R_1$ of Type 2 with this condition cannot be equivalent to an algebra $R_2$ of  any other type, satisfying ($\dagger$),  because in $R_1$ we have a central homogeneous solution of $x^2=-1$. In the algebras of other types, there could be solutions of this equation but they are not central. Since these other algebras are tensor products of $[k,\ell]$, $[-1,\ell]$, $[-1,-1]$, and $[m]$($=\cC(2^m;1))$, one can easily check the absence of central solutions in each of these algebras separately.

All the the remaining algebras belong to one of the following classes:
\begin{enumerate}
\item[(i)] Type 1 from Proposition \ref{pmu_1}
\item[(ii)] $\cD(2,2^{\ell_1};\mu_1,1)\ot \cdots\ot\cD(2,2^{\ell_s};\mu_s,1)\ot B$, where there is exactly one $\mu_i=-1$, hence by ($\dagger$) $\ell_i\ge 2$, and $B$ is as in (i), with all $k_i\ge 2$;
\item[(iii)] Type 4 from Proposition \ref{pmu_1}, that is, $\cD(2,2;-1,-1)\ot B$, where $B$ is in (i).
\end{enumerate}

If both $R$ and $R'$ belong to the same class (i) or (iii) then $R\sim R'$ if and only if  $\bch(R)=\bch(R')$ hence $\chi(R)=\chi(R')$.

Now suppose both $R$ and $R'$ are in (ii). In this case $R=R_1\ot A$, $R'=R'_1\ot A$ where
\begin{displaymath}
R_1=[-1,\ell][1,\ell+p][1,1]^{\ot t},\; R_1'=[1,\ell][-1,\ell+p][1,1]^{\ot t}\mbox{ where }p\ge 1,\:\ell\ge 2,
\end{displaymath}
and
$A$ is in class (i), without tensor factors $[1,1]$, in which case it does not have homogeneous solutions of $x^2=-1$.

As earlier, we will be counting the homogeneous solutions of (\ref{e_plus}) and (\ref{e_three_plus}) in $R_1$, $R_1'$, $R$ and $R'$. If $x^2=1$ and $x=\wx\bx$,
$\wx\in R_1$, $\bx\in A$ then $\wx^2=\bx^2=1$. Similarly, if $y^{2^\ell}=1$ and $y=\wy\by$, $\wy\in R_1$, $\by\in A$, then $\wy^{2^\ell}=1$ and $\by^{2^\ell}=1$.

As before, we denote by $a_0$ or $a_1$ the number of pairs $(g,h)\in G\times G$ such that there are $x\in A_g$ and $y\in A_h$ satisfying $x^2=y^{2^\ell}=1$ and $xy=yx$, in the case of $a_0$, or $xy=-yx$, in the case of $a_1$. By $b_0$ and $b_1$ (respectively, $b_0'$ and $b_1'$) we denote similar numbers for $R_1$ and $R_1'$.

Let us first evaluate $b_0$ and $b_1$. If $x^2=1$ and $Y$ denotes the set of all homogeneous solutions of $y^{2^\ell}=1$ then either $x$ commutes with all $y\in Y$ or with a half of these elements. Let us write
\begin{displaymath}
x=u_1^\alpha v_1^{2\beta}u_2^\gamma v_2^{2\delta}z,\; y=u_1^{\alpha'}v_1^{\beta'}u_2^{\gamma'} v_2^{2\delta'}z'.
\end{displaymath}

\smallskip

Here $\alpha,\beta,\gamma,\delta,\alpha',\beta',\gamma'\delta'$ are integers and $z,z'\in[1,1]^{\ot t}$. If $x$ commutes with all $y\in Y$, then $\alpha=0$, $z=1$.

The number of elements in the support of the set of homogeneous solutions of $x^2=1$, commuting with the whole of $Y$, equals 8 while the number of homogeneous solutions commuting with the half of $Y$ equals $8\cdot 4^t-8$. Setting $N=|Y|$, we obtain
\begin{displaymath}
b_0=8N+\frac{1}{2}N(8\cdot 4^t-8)=4N(4^t+1),\; b_1=\frac{1}{2}N(8\cdot 4^t-8)=4N(4^t-1).
\end{displaymath}
Using Remark \ref{r_norm}, we find that the number of pairs in the support of the set of homogeneous solutions of (\ref{e_plus}) in $R$ equals
 \begin{displaymath}
 a_0b_1+a_1b_0=4N(4^t-1)a_0+4N(4^t+1)a_1.
 \end{displaymath}
 Similarly, if $x^2=1$ in $R_1'$ then $x=u_1^\alpha v_1^{2\beta}u_2^\gamma v_2^{2\delta}z$ and the number of such $x$ commuting with $Y$ equals 4, so that
 \begin{displaymath}
 b_0'=2N(2\cdot 4^t+1),\; b_1'=2N(2\cdot 4^t-1).
 \end{displaymath}
 As a result, the number of pairs in the support of the set of homogeneous solutions of (\ref{e_plus}) in $R'$ equals
\begin{displaymath}
 a_0b_1'+a_1b_0'=2N(2\cdot 4^t-1)a_0+2N(2\cdot 4^t+1)a_1.
 \end{displaymath}
Hence the equality $a_0b_1+a_1b_0=a_0b'_1+a_1b_0'$ is possible only if $a_0=a_1$. But we know that $a_0>a_1$ and hence $R$ and $R'$ are not equivalent.

Now let us assume that $R$ is in Class (ii) while $R'$ in Class (i). Then
\begin{displaymath}
R=[-1,\ell]\ot[1,1]^{\ot t}\ot A,\; R'=[1,\ell]\ot[1,1]^{\ot t}\ot A,
\end{displaymath}
 where $t\ge 0$ and $ A$ of Type 1 in Proposition \ref{pmu_1}, without factors $[1,1]$.
 
Let us recall the numbers $d_{\pm}^t$, which are  the numbers of elements in the support of the set of homogeneous solutions for $x^2=\pm 1$ in $[1,1]^{\ot t}$. Let also $T$ denote the number of elements in the support of the set of homogeneous solutions for $x^2=1$ in $A$.
Then the number of elements in the support of the set of homogeneous solutions for $x^2=1$ is as follows. In $R$ it is equal to $\alpha=(2d_+^t+2d_-^t)T$, in $R'$ this is $\beta=4d_+^tT$. The equality $\alpha=\beta$ would mean that $d_+^t=d_-^t$. But it follows from (\ref{e_Clifford}) that  one of $d_+^t, d_-^t$ is divisible by 3 while the other is congruent $1\!\! \mod 3$. It follows that $R$ and $R'$ are not equivalent.

Next, let $R$ be in Class (i) and $R'$ in Class (iii). One can assume that 
\begin{displaymath}
R=[1,1]\ot[1,1]^{\ot t}\ot A,\; R'=[-1,-1]\ot[1,1]^{\ot t}\ot A,
\end{displaymath}
where $t\ge 0$ and  $A$ has no homogeneous roots of $x^2=-1$. The number of elements in the support of the set of homogeneous solutions of $x^2=1$ in $R$ equals $\alpha= (3 d_+^t+d_-^t)T$ and the same number for  $R'$ equals $\beta=(d_+^t+3d_-^t)T$, where $d_+^t, d_-^t$ and $T$ are the same as in the previous case. Then $\alpha=\beta$ is equivalent to $d_+^t=d_-^t$, which is not the case.

And the final case is where $R$ is as in (ii) while $R'$ is as in (iii). Then
\begin{displaymath}
R=[-1,\ell]\ot[1,1]^{\ot t}\ot A,\; R'=[1,\ell]\ot[-1,-1]\ot[1,1]^{\ot t-1}\ot A,
\end{displaymath}
where $\ell\ge 2$ and $t\ge 0$.
Using the same notation for $A$, $T$, $d_+^t$ and $d_-^t$, as earlier, if $t>1$, we obtain $\alpha=\alpha(R)=2T(d_+^t+d_-^t)=8(d_+^{t-1}+d_-^{t-1})T$ and $\beta=\beta(R')=4(d_+^{t-1}+3d_-^{t-1})T$. In this case, if we assume $\alpha=\beta$ then $d_+^{t-1}=d_-^{t-1}$.  So again, in this case, $R\not\sim R'$. Now if
\begin{displaymath}
R=[-1,\ell]\ot[1,1]\ot A,\; R'=[1,\ell]\ot[-1,-1]\ot A,
\end{displaymath} 
then the number of elements in the support of homogeneous solutions of $x^2=1$ in $R$ equals $(2\cdot 3+2\cdot 1)\cdot T=8T$, and for $R'$ this equals  $4\cdot T\cdot 3=12T$. Again, we have a contradiction.

As a result, we have the following.

\begin{proposition}\label{t_2_1}
Two graded division algebras $P$ and $Q$, of the form (\ref{tpDD}), and asuming the restrictions of Proposition \ref{pmu_1}, are equivalent if and only if $\chi(P)=\chi(Q)$.
\end{proposition}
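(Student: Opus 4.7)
The implication $\chi(P)=\chi(Q)\Rightarrow P\sim Q$ is immediate: two algebras of the form (\ref{tpDD}) sharing the same characteristic admit identical presentations by graded generators and defining relations as spelled out in Section \ref{sOne_dim}, so they are graded-isomorphic and in particular equivalent. My plan for the converse is to extract a sequence of numerical invariants of the graded equivalence class and show that together they determine $\chi$.

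First I would apply Proposition \ref{pmu_1} to replace $P$ and $Q$ by equivalent algebras of one of the four canonical Types, and then use Proposition \ref{pchar_1} to conclude $\bch(P)=\bch(Q)$. The remaining information to pin down is the Type and the precise distribution of the signs $\mu_i,\nu_i,\eta_i$. The workhorse invariant is the \emph{support-count} of the set of homogeneous solutions of a prescribed system of power and commutation relations such as $x^{2^k}=\pm I$, $xy=\pm yx$, or $[x,R]=0$: any graded equivalence carries homogeneous solutions to homogeneous solutions, and the cardinality of the corresponding support is preserved. Using such counts I would separate the Types --- a central homogeneous root of $x^{2^m}=-I$ singles out Type 2, the absence of any homogeneous root of $x^{2^k}=-I$ for $k\ge 2$ (Remark \ref{rpmu_1}) singles out Type 1, and the presence of an $\H^{(4)}$ summand distinguishes Type 4 from Type 3.

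To finish the comparison within a single Type I would imitate Steps I--III of Section \ref{sss2} in the high-degree case and the Clifford numerology of Section \ref{sss3} in the degree-$2$ case. The setup is to factor $P=A\ot R_1$ and $Q=A\ot R_2$, where $A$ is a common Type-1 tail (available thanks to Proposition \ref{pmu_1}) and $R_1,R_2$ differ only by swapping a $-1$ sign between two basic blocks of the same truncated shape. Decomposing a candidate solution as $x=\bx\wx$, $y=\by\wy$ with barred factors in $A$ and tilded factors in $R_i$, Remark \ref{r_norm} expresses the total support-count as a bilinear form $a_0b_1+a_1b_0$ in local counts coming from $A$ and from $R_i$. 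The nonequivalence then follows from the strict inequality $a_0>a_1$ (an elementary direct count in $A$) combined with $b_1\ne b_1'$, which is verified case-by-case in the configurations $(a_1),(a_2),(a_3)$ when the odd generators have degree $\ge 4$, and follows from $d_+^m>d_-^m$ together with the $\bmod\,3$ dichotomy of Lemma \ref{lpmu_1} when the odd generators have degree $2$.

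The main obstacle is organizational rather than conceptual: there are many pair-combinations of Types and subcases to handle, and at each step one must check that the auxiliary factor $A$ really is shared by both sides --- precisely the reduction provided by Proposition \ref{pmu_1}. A second delicate point is the degree-$2$ case, where one cannot freely move a $-1$ between factors because $\cD(2,2;-1,-1)\not\sim\cD(2,2;1,1)$; here the mod-$3$ invariants from Lemma \ref{lpmu_1} supply the needed separation in place of the cruder anticommuting-pair count used in higher degrees.
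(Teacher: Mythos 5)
Your proposal is correct and follows essentially the same route as the paper, whose proof of this proposition is exactly the content of Sections \ref{ssne_1}--\ref{sss3}: reduce via Proposition \ref{pmu_1} and Proposition \ref{pchar_1}, then separate the remaining sign data by counting the supports of homogeneous solution sets of systems $x^{2^k}=\pm I$, $xy=\pm yx$, using the factorization $P=A\ot R_1$, $Q=A\ot R_2$, Remark \ref{r_norm}'s bilinear count $a_0b_1+a_1b_0$ with $a_0>a_1$, and the Clifford counts $d_{\pm}^m$ of Lemma \ref{lpmu_1} in the degree-$2$ case. The only caveat is that your text is a roadmap rather than a worked-out argument, but every invariant and reduction you name is the one the paper actually uses.
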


\begin{theorem}\label{t_2_1_m} Any finite-dimensional noncommutative real graded division algebra with 1-dimensional components  is equivalent to exactly one algebra on the following list.
\begin{enumerate}
\item $\cD(\chi)\ot\R G$, where  $\chi $ can be even or odd and $G$ a finite abelian group;
\item $\cC (2^m;-1)\ot\cD(\chi)\ot\R G$, where  $\chi $ is even and  $G$ a finite abelian group;
\item $\H^{(4)}\ot\cD(\chi)\ot\R G$, where  $\chi$ is even and  $G$ a finite abelian group.
\end{enumerate}

\end{theorem}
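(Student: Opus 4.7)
The plan is to assemble the theorem by combining three results already in hand: Theorem~\ref{tonedim}, Proposition~\ref{pmu_1}, and Proposition~\ref{t_2_1}. The statement is essentially a clean repackaging, so most of the work is bookkeeping and a careful matching of cases.

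For the existence part, I would start with Theorem~\ref{tonedim}, which writes $R$ as a graded tensor product of several copies of $\cD(2^{k_i},2^{\ell_i};\mu_i,\nu_i)$, at most one factor $\cC(2^m;-1)$, and a group algebra $\R H$ whose support is a direct factor of the grading group. I would then apply Proposition~\ref{pmu_1} to reduce the $\cD$ and $\cC$ factors to one of the four canonical Types there. The correspondence with the three cases of the present theorem is: Type 1 gives case (1) with $\chi$ even; Type 3 gives case (1) with $\chi$ odd; Type 2 gives case (2); and Type 4 gives case (3). The ambient $\R G$ in the theorem absorbs both the $\R H$ factor of Theorem~\ref{tonedim} and (via the Sylow decomposition used at the start of the proof of Proposition~\ref{pchar_1}) the odd-order part of the support, on which all the $\cD$ and $\cC$ factors act trivially. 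This produces an algebra of the required form.

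For uniqueness I would argue in two layers. Within a single case the equivalence is controlled by $\chi$ together with the isomorphism class of the grading group: Proposition~\ref{t_2_1} says that two algebras from the reduced list of Proposition~\ref{pmu_1} are equivalent if and only if they have the same characteristic, and the residual $\R G$ is pinned down by the group isomorphism type that accompanies any equivalence. Across cases I would invoke homogeneous-solution counts. The existence of a \emph{central} homogeneous $x$ with $x^{2^m}=-1$ isolates case (2), since in algebras of Type 1, 3, and 4 every such solution is non-central (one checks this factor by factor among $[k,\ell]$, $[-1,\ell]$, $[-1,-1]$, and $[m]$). To separate case (3) from case (1) with $\chi$ even, I would count the support-cardinality of $\{x\mid x^2=1\}$ via Lemma~\ref{lpmu_1}: in case (1) one gets $(3d_+^t+d_-^t)T$ while case (3) yields $(d_+^t+3d_-^t)T$, and since one of $d_+^t,d_-^t$ is $\equiv 0\pmod 3$ and the other $\equiv 1\pmod 3$, these numbers cannot coincide. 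Similarly, the parity-mod-$3$ argument and the $a_0>a_1$ inequality from Section~\ref{sss3} handle the remaining cross-case comparisons that involve $\chi$ odd.

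The main obstacle is the last step, namely the fine-grained separation of case (3) from the $\chi$-even subcase of case (1) (and the analogous separations involving $\chi$ odd). The key input for this is already present in Lemma~\ref{lpmu_1} and the solution-counting arguments in Sections~\ref{sss2} and~\ref{sss3}, so the role of the present proof is to quote these calculations in the appropriate configuration rather than to redo them. Once the non-equivalence across the three cases is verified, the theorem follows by combining existence (Theorem~\ref{tonedim} plus Proposition~\ref{pmu_1}) with intra-case uniqueness (Proposition~\ref{t_2_1}).
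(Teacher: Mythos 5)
Your proposal is correct and follows essentially the same route as the paper: the theorem is stated there as a summary of Theorem~\ref{tonedim}, Proposition~\ref{pmu_1} (with the type correspondence Type~1/3~$\to$~case~(1), Type~2~$\to$~case~(2), Type~4~$\to$~case~(3), exactly as you identify), and Proposition~\ref{t_2_1}, with the cross-case separations supplied by the central-solution and mod-$3$ counting arguments of Section~\ref{sss3}. Your write-up makes explicit the case-matching that the paper leaves implicit, but the substance is identical.
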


 This Theorem together with Theorem \ref {tcca} completes the classification of real graded division algebras with one-dimensional homogeneous components

\section{Graded division algebras with noncentral 2-dimensional identity component}\label{s2nc}

Again, we have $R=\bigoplus_{g\in G}R_g$ but now $\dim R_e=2$ and $R_e\not\subset Z(R)$. As earlier, let
$G=(g_1)_{n_1}\times\ld\times(g_q)_{n_q}$ be a primary cyclic factorization of $G$, where each $g_i$ is an element of order $n_i$, $i=1,\ld,q$.

Since $R_e\cong\C$, there is an element $J\in R_e$ such that $J^2=-1$ and since $R_e$ is non-central, $J\not\in Z(R)$. In this case, for any $a\in R_g$, the map $x\to axa^{-1}$, where $x\in R_e$ is an automorphism of $R_e\cong\C$, hence $aJa^{-1}=\pm J$. Also, $a^2J=Ja^2$.

If $g\in G$ is an element of odd order $o(g)=2s-1$, then $(g^2)^s=g$. Given $a\in R_g$, we then have $(a^2)^s\in R_g$. Then $R_g=R_e (a^2)^s=\langle (a^2)^s, J(a^2)^s\rangle$. Since $a^2$ commutes with $J$, it follows that the whole of $R_g$ commutes with $J$. Thus $\bigoplus_{o(g)\mbox{ \tiny odd}}R_g$ commutes with $J$.

Therefore, only $a\in R_{g_i}$, with $o(g_i)$ a 2-power, can anticommute with $J$. Using the same method as in the proof of Theorem \ref{tonedim}, we can modify our primary cyclic factorization of $G$ so that
\begin{displaymath}
G=(g_1)_{n_1}\times (g_2)_{n_2}\times\cdots\times (g_p)_{n_p}\times (g_{p+1})_{n_{p+1}}\times\cdots\times (g_q)_{n_q}
\end{displaymath}
is such that $n_1$ and  $n_2|\ld|n_p$ are 2-powers, there is $a\in R_{g_1}$ such that $aJ=-Ja$ while the elements in $R_{g_2},\ld, R_{g_q}$ commute with $J$. We can assume that $n_1$ is the minimal number with this property.
Let us set $H=(g_2)_{n_2}\times\ld\times (g_q)_{n_q}$, $T=(g_1)_{n_1}$. We have that $Q=R_H$ commutes with $J$ while $P=R_T$ does not.

Clearly, $P$, as an algebra, is generated by $J$ and $a$. Let $S$ be the span of the set of homogeneous elements $x$ of $Q$ such that $xax^{-1}=\lambda a$, where $\lambda$ is a positive real number.  Notice that for such an element $x\in R_h$, if we take another $x'\in R_h$ with the same property, we must have $x'\in\R x$. Indeed, since $R_h$ commutes with $J$, the subspace $R_h$ is a 1-dimensional \textit{complex} space, $R_e\cong\C$ being the complex coefficients. So there is $z\in \C$ such that $x'=xz$.  Also, the conjugation of $R_e$ by $a$ is a complex conjugation in $R_e$. So if $x'a(x')^{-1}=\mu a$, where $\mu$ is a positive real number, then we must have $xzaz^{-1}x^{-1}=\mu a$, or $xz\bz^{-1}ax^{-1}=\mu a$. Hence $\frac{1}{|z|^2}z^2(xax^{-1})=\mu a$. Finally, we find that $z^2$ is a positive real number. It follows then that $z$ is a real number and so $x'\in \R x$. Also, if $x_1\in R_{h_1}$, $x_2\in R_{h_2}$ are such that $x_ia(x_i)^{-1}=\lambda_i a$, where $\lambda_i>0$, for $i=1,2$, then $(x_1x_2)a(x_1x_2)^{-1}=(\lambda_1\lambda_2)a$, proving that the space $S$ spanned by such homogeneous elements is a graded subalgebra with one-dimensional components.

Finally, notice that such $x$ exists in every homogeneous component of $Q$. Indeed if $x$ is an arbitrary element of $R_h$ then $xax^{-1}=ua$, for some $u\in R_e$. If we replace $x$ by $x'=xz$ then, as earlier, $x'a(x')^{-1}=\frac{1}{|z|^2}z^2(xax^{-1})=\frac{1}{|z|^2}z^2ua$. Taking $z$ such that $z^2=u^{-1}$, we will have $x'a(x')^{-1}=\frac{1}{|z|^2}a$ so that $x'$ is the desired element of  $R_h$.

As a result, $S$ is a  graded division subalgebra of $Q$ and $\dim S=|H|$.  We have described such subalgebras in Section \ref{sOne_dim}. Now we have two graded subalgebras in $R$: $P$ of dimension $2n_1$ and $S$ of dimension $|H|$. The bilinear map $f:P\times S\to R$ given by $f(a^ku,x)=a^kux$, where $u\in R_e$, $k=0,1,2,\ld n_1-1$, $x\in S$,  extends to the linear map $\bar{f}:P\ot S\to R$. Clearly, this is a map onto. Since the dimensions of $R$ and $P\ot S$ are the same, we have that $\bar{f}$ is a\textit{ vector space }isomorphism. If we prove that $S$ and $P$ commute, then $\bar{f}^{-1}$ is a $G$-graded algebra isomorphism $R\cong P\ot S$.

To prove that, indeed, $P$ and $S$ commute, we consider the action of $a$ on the complex space $Q$. Pick a homogeneous $x\in Q_h$ then $axa^{-1}=zx$ where $z\in R_e$. Since $a$ acts by conjugation on $R_e\cong \C$, we have that $a^2xa^{-2}=z\bz x=|z|^2x$. Now if $n_1=2m$ then $g_1^{2m}=e$, and so $(a^2)^m\in R_e$. Since $x$  commutes with $R_e$, we have that $(|z|^2)^mx=(a^2)^mx(a^{2})^{-m}=x$. It then follows that $|z|^{2m}=1$ and hence $|z|=1$. Thus $a^2$ commutes with the whole of $Q$. Since $a^2$ commutes with $a$ and $J$, it follows that $a^2$ is in the center of $R$.

Now let us take $x\in S_h$. There is positive real $\lambda$  such that $xax^{-1}=\lambda a$. Then $a^2=(xax^{-1})(xax^{-1})=\lambda^2a^2$ so that $\lambda=1$, as needed.

To determine a realization of $P$, let us  set $g=g_1,\, n=n_1$. We remember that $n$ is a 2-power. Then, for any homogeneous $a\in P$, we must have $a^n=\alpha I+\beta J\in R_e$. If $a\in P_g$ is such that also $aJ=-Ja$ then $a^n=\alpha I+\beta J$ so that, if we conjugate by $a$, we obtain $\alpha I-\beta J=\alpha I+\beta J$. As a result, $\beta=0$ and $a^n=\alpha I$, where $\alpha$ is a real number. If we replace $a$ by $\frac{1}{\sqrt[n]{|\alpha|}}a$, we will obtain $a^n=\pm I$.

Now let us take $M=\R(g)_n\ot M_2(\C)$, with natural grading by $(g)_n$. Then choose $u=1\ot C$ and $v= g\ot\omega A$, where $\omega$ is a complex number such that $\omega^n=\ve$, where $a^n=\ve I$. Also, $A$ and $C$ are standard Pauli matrices, as defined by (\ref{eE1}). Then consider the subalgebra $\wh{M}=\alg\{u,v\}$. Comparing defining relations and dimensions, we easily obtain $P\cong \wh{M}$ as $G$-graded algebras. Let us denote the algebra thus described by $\cE(n;\ve)$. This is  a\textit{ basic algebra of the third kind}.

\begin{theorem}\label{ttwodim}
Any division grading on a finite-dimensional real algebra with two-dimensional noncentral identity component is equivalent to the tensor product of one grading of the form $\cE(n;\ve)$, several gradings of the form  $\cD(k,\ell;\mu,\nu)$ and several gradings $\cC(m;\ve)$ where $m, n,k,\ell$ are natural numbers $>1$, $n,k,\ell$  are $2$-powers,  and $\ve, \mu, \nu=\pm 1$. Moreover, $n$ is the minimal positive integer such that the
equation $x^n=\pm 1$ has a homogeneous solution not lying in the centralizer of $R_e$.
\end{theorem}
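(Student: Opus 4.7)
The plan is to assemble the theorem from the graded algebra isomorphism $R \cong P \otimes S$ and the identification $P \cong \cE(n;\ve)$ (with $n = n_1$) that were established in the paragraphs immediately preceding the theorem statement, combined with the known structure of graded division algebras having one-dimensional homogeneous components.

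First, I would apply Theorem~\ref{tonedim} (or Theorem~\ref{tcca} if $S$ happens to be commutative) to the factor $S$, which is a graded division algebra with one-dimensional homogeneous components and support $H$. That result expresses $S$, up to equivalence, as a graded tensor product of basic second-kind algebras $\cD(k_i,\ell_i;\mu_i,\nu_i)$ with $k_i,\ell_i$ powers of $2$, at most one first-kind algebra $\cC(2^m;-1)$, and a group algebra $\R K$ for some direct factor $K$ of $H$. The group algebra in turn splits as $\R K \cong \bigotimes_j \R\ZZ_{m_j} \cong \bigotimes_j \cC(m_j;1)$, with the trivial cyclic factors discarded so that each remaining $m_j > 1$. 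Tensoring with $P \cong \cE(n;\ve)$ then yields the announced decomposition of $R$.

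Next, I would verify the minimality of $n$. If a homogeneous element $x \in R_g$ satisfies $x^n = \pm I$, then $x^n \in R_{g^n} \cap R_e$ forces $g^n = e$, so $o(g) \mid n$. The centralizer $C_R(R_e) = C_R(J)$ coincides with $R_{G_0}$, where $G_0$ is the kernel of the homomorphism $G \to \Aut_{\R}(R_e) \cong \ZZ_2$ induced by conjugation; this $G_0$ is an index-two subgroup of $G$. Thus $x \notin C_R(R_e)$ means $g \notin G_0$. In the primary cyclic factorization fixed in the preparation, an element $g = g_1^{a_1}\cdots g_q^{a_q}$ lies outside $G_0$ exactly when $a_1$ is odd, and since $n_1$ is a power of $2$, $\gcd(n_1, a_1) = 1$, which forces $o(g_1^{a_1}) = n_1$ and hence $o(g) \geq n_1$. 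Combined with $o(g) \mid n$, this yields $n \geq n_1$, with equality attained by the chosen generator $a \in R_{g_1}$ satisfying $a^{n_1} = \pm I$ and $a \notin C_R(R_e)$.

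The delicate point, as I see it, is the last step: one must verify that the $n_1$ produced by the primary cyclic construction genuinely equals the intrinsic minimum stated in the theorem, rather than being merely the smallest order among the chosen cyclic generators. This is what the lcm-based argument on $o(g_1^{a_1}\cdots g_q^{a_q})$ handles. Once this subtlety is addressed, the remainder of the proof is a routine assembly of already-proved ingredients.
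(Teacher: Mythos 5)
Your proposal is correct and takes essentially the same route as the paper, whose argument for this theorem is exactly the construction $R\cong P\ot S$ with $P\cong\cE(n;\ve)$ carried out in the paragraphs preceding the statement, combined with Theorem~\ref{tonedim} (and Theorem~\ref{tcca}) applied to the factor $S$ with one-dimensional components. Your explicit verification of the minimality of $n$ --- identifying $C_R(R_e)$ with $R_{G_0}$ for the index-two kernel $G_0$ of the conjugation action on $R_e$, and noting that any $g\notin G_0$ has odd exponent at $g_1$ and hence order at least $n_1$ --- is a worthwhile addition, since the paper only asserts this through its choice of $n_1$ in the modified factorization.
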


\section{Equivalence of graded division algebras with noncentral 2-dimensional identity component}\label{sse2nc}

Clearly, $\cE(2^k;\mu)\sim\cE(2^\ell;\nu)$ if and only if  $k=\ell$ and $\mu=\nu$. Indeed, the first equation is guaranteed because the grading group of $\cE(2^k;\mu)$ is $\ZZ_{2^k}$. As for the second equation, if $k>1$  then an equation $x^{2^k}=-1$ has homogeneous solution in $\cE(2^k;-1)$ but not in $\cE(2^k;1)$. If $k=1$ then $\cE(2;-1)\sim\H^{(2)}$, while $\cE(2;1)\sim M_2^{(2)}$.

In what follows, we write $(\rho k]$ for $\cE(2^k;\rho)$, where $\rho=\pm 1$. In particular, $(\rho]$ stands for $(1]\sim M_2^{(2)}$ if $\rho=1$ and $(-1]\sim \H^{(2)}$ if $\rho=-1$. By $\{J,u\}$ we denote the standard generating set for $\cE(2^k;\mu)$, that is $u^2=-I$, $v^{2^k}=\mu I$ and $uv=-vu$. Now $\deg u=e,\deg v=g$, where $\ZZ_{2^k}=(g)_{2^k}$. We have the following equivalences for the tensor products  of basic algebras $\cE(2^k;\rho)=(\rho k]$ and $\cD(2^\ell, 2^m;\mu,\nu)=[\mu k,\nu \ell]$ or $\cC(2^\ell;\eta)=[\eta \ell]$. We write $\{v,w\}$ for the standard generating set of $[\mu k,\nu \ell]$ and $\{w\}$ for $[\eta \ell]$.

\begin{lemma}\label{leq_2}
\begin{enumerate}
\item $(-k][-\ell]\sim(k][-\ell]$ if $k\le \ell$, new generating set $\{J,uv^{2^{\ell-k}}\}\{v\}$;
\item $(-k][-\ell]\sim(-k][\ell]$ if $k>\ell$, new generating set $\{J,u\}\{u^{2^{k-\ell}}v\}$;
\item $(-k][\ell,-m]\sim(k][\ell,-m]$ if $k< m$, new generating set $\{J,uw^{2^{m-k}}\}\{v,w\}$;
\item $(-k][\ell,-m]\sim(-k][\ell,m]$ if $k> m$, new generating set $\{J,u\}\{v,u^{2^{k-m}}w\}$;
\item $(-k][\ell,-k]\sim(k][\ell,-k]$,  if $\ell\ge 2$, new generating set $\{J,uw\}\{Jv,w\}$;
\item $(-k][1,-k]\sim(k][1,-k]$ if $k\geq 2$, new generating set $\{J,uw\}\{Jvw^{2^{k-1}},w\}$;
\item $(-k][-\ell,m]\sim(-k][\ell,m]$ if $k>\ell$, new generating set $\{J,u\}\{u^{2^{k-\ell}}v,w\}$;
\item $(-k][-\ell,m]\sim(k][-\ell,m]$ if $k<\ell$, new generating set $\{J,uwv^{2^{\ell-k}}\}\{v,w\}$;
\item $(-k][-k,m]\sim(k][-k,m]$, if $m\ge 2$, new generating set $\{J,uv\}\{v,Jw\}$;
\item $(\rho k][-1,\nu\ell]\sim(\rho k][1,\nu\ell]$, if $\ell\le k$, new generating set $\{J,uw\}\{Jv,w\}$;\label{eq10}
\item $(\rho k][-1,-1]\sim(\rho k][1,-1]]$, if $k\ge 2$, new generating set $\{J,uw\}\{Jv,w\}$;
\item $(\rho][-1,-1]\sim(-\rho][1,-1]$, new generating set $\{J,uw\}\{Jv,w\}$.
\end{enumerate}
\end{lemma}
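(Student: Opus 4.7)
Each of the twelve equivalences is verified in the same spirit: the lemma already presents the candidate new generating set, and what remains is to check that it induces a weak isomorphism. Concretely, for each item I would take the proposed generators and verify that (a) they satisfy the power and (anti)commutation relations of the target algebra on the right-hand side, and (b) the assignment of degrees extends to an automorphism of the common grading group $G$. Once (a) and (b) hold, the universal presentation of the target algebra by graded generators and defining relations maps into the source algebra, and since both algebras have the same (finite) total dimension, this map is a $G$-graded isomorphism accompanied by a group automorphism, which is by definition a weak isomorphism and hence an equivalence.

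The algebraic side of the verification rests on three ingredients. First, generators from different tensor factors commute, so powers of products like $uv^{2^{\ell-k}}$ or $uw$ (where $u$ lives in the $\cE$-factor and $v,w$ in the adjacent $\cC$- or $\cD$-factor) split multiplicatively. Second, within the $\cE$-factor we have $uJ=-Ju$, and within each $\cD$-factor the two generators anticommute; combined with Lemma~\ref{lsimple}, this lets me evaluate powers of products such as $Jv$, $uw$, or $Jvw^{2^{k-1}}$ in closed form, with the sign $(-1)^{mn\,p(p-1)/2}$ absorbed by the stated hypotheses. Third, each numerical side-condition in the lemma is present for a precise reason: $k\le\ell$ in item~(1) guarantees $2^{\ell-k}$ is a nonnegative integer; $\ell\ge 2$ in items (5), (10), (11) guarantees $J^{2^\ell}=(-I)^{2^{\ell-1}}=I$ so that $(Jv)^{2^\ell}=v^{2^\ell}$; and $k\ge 2$ in items (6) and (11) guarantees that $2^{k-1}$ is even so that $w^{2^{k-1}}$ commutes with $v$. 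A typical sample check (item~5): with $u'=uw$, $v'=Jv$, one has $(u')^{2^k}=u^{2^k}w^{2^k}=(-I)(-I)=I$, $(v')^{2^\ell}=J^{2^\ell}v^{2^\ell}=I\cdot I=I$ (using $\ell\ge 2$), $u'J=-Ju'$, $v'w=-wv'$, and $u'v'=v'u'$ after expanding and applying the anticommutations; all these are exactly the relations of $(k][\ell,-k]$.

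The degree-theoretic side is routine: in every item each new generator equals an old generator times a monomial in the other old generators, so in the natural basis of $G$ the transformation matrix is unitriangular and hence an automorphism of $G$. For instance, in item~(1) the map is $g_1\mapsto g_1 g_2^{2^{\ell-k}}$, $g_2\mapsto g_2$; in item~(5) it is $g_1\mapsto g_1 g_3$, $g_2\mapsto g_2$, $g_3\mapsto g_3$; and analogously in the remaining items.

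I expect the only real obstacle to be sign bookkeeping, particularly in items (5)--(12), where the substitution introduces factors of $J$ inside a second tensor factor and one has to track both $J^2=-I$ and the internal anticommutation of the basic-algebra generators simultaneously. Lemma~\ref{lsimple} reduces every such computation to a mechanical one, so the entire proof becomes a structured case check rather than a source of conceptual difficulty.
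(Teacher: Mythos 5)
Your proposal is correct and is exactly the argument the paper intends: the lemma's content is the list of new generating sets, and the paper leaves to the reader precisely the verification you describe (checking power and commutation relations via Lemma~\ref{lsimple} and noting that the degree substitution is a unitriangular, hence invertible, map on the grading group), in the same style as the explicit sample computations carried out in the proof of Proposition~\ref{pmu_1}. Your worked example for item~(5) matches what one obtains from the paper's generating set $\{J,uw\}\{Jv,w\}$, including the role of the hypothesis $\ell\ge 2$ in making $J^{2^\ell}=I$.
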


Thanks to this Lemma, tensor factors of Type 4 in Proposition \ref{pmu_1} do not appear in the following.

\begin{proposition}\label{p2list}
Any graded division algebra with noncentral 2-dimensional identity component is equivalent to one of the following
\begin{enumerate}
\item[$(1_2)$]  $\cE(2^k;1)\ot S$, where $S$ is of Type 1 in Proposition \ref{pmu_1};
\item[$(2_2)$]  $\cE(2^k;-1)\ot S$, where $S$ is of Type 1 in Proposition \ref{pmu_1};
\item[$(3_2)$] $\cE(2^k;1)\ot S$, where $S$ is of Type 2 or 3 in Proposition \ref{pmu_1}, without factors of the form $\cD(2,2^\ell;-1,1)$ with $\ell\le k$.
\end{enumerate}
\end{proposition}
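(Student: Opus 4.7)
My plan is to run a three-step cascade: decompose $R$ using Theorem~\ref{ttwodim}, normalize the $1$-dimensional tensor factor by Proposition~\ref{pmu_1}, and then apply the twelve rewriting rules of Lemma~\ref{leq_2} to drive the result into one of the three listed forms.

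\emph{Setup.} By Theorem~\ref{ttwodim}, we may write $R\sim\cE(2^k;\varepsilon)\otimes T$, where $\varepsilon\in\{\pm 1\}$ and $T$ is a graded division algebra with one-dimensional components; by Proposition~\ref{pmu_1}, we can further arrange $T$ to be of one of Types~1--4. If $T$ is of Type~1 and $\varepsilon=1$ (resp.\ $\varepsilon=-1$) we are immediately in case $(1_2)$ (resp.\ $(2_2)$) and there is nothing more to do.

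\emph{Eliminating Type~4.} If $T$ contains a $\cD(2,2;-1,-1)$ factor, I split it off and use Lemma~\ref{leq_2}(11) when $k\ge 2$ and Lemma~\ref{leq_2}(12) when $k=1$; combined with the equivalence $\cD(2,2;1,-1)\sim\cD(2,2;1,1)$ from~(\ref{ehm}), this converts the $[-1,-1]$ into a Type~1 factor (possibly flipping $\varepsilon$ in the case $k=1$). After this maneuver, $T$ is of Type~1, 2, or 3.

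\emph{Flipping $\varepsilon$.} Suppose now $\varepsilon=-1$ and $T$ still carries some $-1$: either a $\cC(2^m;-1)$ factor or a single $\cD$ factor with exactly one of $\mu,\nu$ equal to $-1$. I locate the unique $-1$ and apply the appropriate item of Lemma~\ref{leq_2}: items~(1)--(2) when it is in a $\cC$ factor (the choice depending on whether $k\le m$ or $k>m$); items~(3)--(6) when it sits in a $\cD(2^a,2^b;1,-1)$ factor (splitting on $k<b$, $k>b$, $k=b$ with $a\ge 2$, $k=b$ with $a=1$); and items~(7)--(9) when it sits in a $\cD(2^a,2^b;-1,1)$ factor (splitting on $k<a$, $k>a$, $k=a$ with $b\ge 2$). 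Each of these rules replaces $\cE(2^k;-1)$ by $\cE(2^k;1)$, possibly shuffling the $-1$ within the one-dimensional factor but never creating a new one. Thus we may assume $\varepsilon=1$, and $T$ is of Type~2 or~3.

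\emph{Removing a forbidden factor.} Finally, if $T$ is of Type~3 and contains $\cD(2,2^\ell;-1,1)$ with $\ell\le k$, Lemma~\ref{leq_2}(10) replaces this factor by $\cD(2,2^\ell;1,1)$, a Type~1 factor; since a Type~3 algebra has \emph{exactly} one $-1$, this move clears $T$ of all $-1$'s, and we fall into case $(1_2)$. Otherwise no forbidden factor is present and we are in case $(3_2)$. The main obstacle, as I see it, is the bookkeeping in the third step: the applicability of each item of Lemma~\ref{leq_2} is governed by a delicate comparison of exponents and parities ($k$ versus $a,b,m$; $a,b\ge 2$ versus $a,b=1$), so one must check by exhaustion that the twelve rules do cover every legal configuration, paying particular attention to the corner cases $a=1$ or $b=1$ in which several side conditions collapse and one has to switch from (say) item~(5) to item~(6) or from item~(9) to item~(10).
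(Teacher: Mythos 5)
Your cascade is exactly the argument the paper intends: the paper gives no explicit proof of this proposition beyond stating Lemma~\ref{leq_2} and remarking that it removes Type~4 factors, so your decomposition via Theorem~\ref{ttwodim}, normalization via Proposition~\ref{pmu_1}, and exhaustive application of the twelve rewriting rules is the same route, just written out in full. One correction to your third step, though: items (2), (4) and (7) of Lemma~\ref{leq_2} do \emph{not} replace $\cE(2^k;-1)$ by $\cE(2^k;1)$ --- they read $(-k][-\ell]\sim(-k][\ell]$, $(-k][\ell,-m]\sim(-k][\ell,m]$ and $(-k][-\ell,m]\sim(-k][\ell,m]$, i.e.\ they keep the factor $\cE(2^k;-1)$ and instead convert the odd one-dimensional factor to an even one. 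So the correct conclusion of the ``flipping $\varepsilon$'' step is not ``we may assume $\varepsilon=1$ and $T$ is of Type~2 or~3,'' but rather ``either the $-1$ migrates out of the one-dimensional part and we land directly in case $(2_2)$, or $\varepsilon$ becomes $+1$ and $T$ stays of Type~2 or~3.'' Since $(2_2)$ is one of the listed outcomes, the proposition still follows, but as written your sentence misdescribes what those three items do. The rest --- the Type~4 elimination via items (11)--(12) with the possible sign flip at $k=1$, the exhaustive comparison of $k$ against the exponents with the corner cases $a=1$, $b=1$ excluded by the Type~3 convention that $(1,1;\pm1,\mp1)$ counts as even, and the final cleanup of $\cD(2,2^\ell;-1,1)$ with $\ell\le k$ via item (10) --- checks out.
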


Let $\{J,u\}$ be the canonical generating set of $(\mu k]= \cE(2^k;\mu)$, where $\mu=\pm 1$. So $Ju=-uJ$, $J^2=-I$, $u^2=\mu I$. Let $a$ be the generating element of the grading group $G\cong\ZZ_{2^k}$. Any nonzero homogeneous element of degree $a^s$ can be uniquely written in the form $zu^s$, where $z=\alpha I+\beta J$, where $\alpha,\beta\in\R$, with $\alpha^2+\beta^2\ne 0$. To facilitate further arguments, we do some calculations in $(\mu k]$. We set $\bz=\alpha I-\beta J$.

\begin{center}
$u^sz=\left\{ \begin{array}{ll}
zu^s&\mbox{ if } s\mbox{ is even}\\ \bz u^s&\mbox{ if } s\mbox{ is odd}
\end{array}
\right.$
\end{center}

\begin{center}
$(z_1u^r)(z_2u^s)=\left\{ \begin{array}{ll}
(z_1z_2)u^{r+s}&\mbox{ if } r\mbox{ is even}\\ z_1\bz_2 u^{r+s}&\mbox{ if } r\mbox{ is odd}
\end{array}
\right.$
\end{center}

If $1\le m\le k$,
\begin{center}
$(zu^s)^{2^m}=\left\{ \begin{array}{ll}
z^{2^m}u^{2^{m+1}t}&\mbox{ if } s=2t\\ (z\bz)^{2^{m-1}} u^{2^{m+1}t}u^{2^m}&\mbox{ if }s=2t+1
\end{array}
\right.$
\end{center}

In particular,
\begin{center}
$(zu^s)^{2^k}=\left\{ \begin{array}{ll}
z^{2^k}&\mbox{ if } s=2t\\ (z\bz)^{2^{k-1}} \mu&\mbox{ if }s=2t+1
\end{array}
\right.$
\end{center}

Let us assume $k\ge 2$ and determine the homogeneous solutions of the equations $x^{2^k}=\pm 1$ in $(\pm k]$.

\begin{lemma}\label{l2dim_calc}

The homogeneous solutions of the equation $x^{2^k}=1$ are as follows. 

In $(k]$ we have finitely many solutions of the form $zu^{2t}$, with $z^{2^{k}}=I$, and infinitely many solutions of the form $zu^{2t+1}$, where $|z|=1$.

In $ (-k]$ we have finitely many solutions of the form $zu^{2t}$, with $z^{2^t}=-I$. 

\medskip

The homogeneous solutions of the the equation $x^{2^k}=-1$ are as follows. 

In $(k]$ we have finitely many solutions of the form $zu^{2t}$, with $z^{2^{k}}=-I$.

In $ (-k]$ we have finitely many solutions of the form $zu^{2t}$, with $z^{2^t}=-I$ and infinitely many solutions of the form $zu^{2t+1}$, where $|z|=1$. 

\end{lemma}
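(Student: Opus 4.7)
The plan is to exploit the case split in the computation of $(zu^s)^{2^k}$ already performed just before the statement of the lemma, so the work essentially consists of substituting into those two formulas and reading off the conclusions. Every homogeneous element of $(\mu k]$ of degree $a^s$ can be written uniquely as $zu^s$ with $z=\alpha I+\beta J\in R_e$ and $(\alpha,\beta)\ne (0,0)$. Identifying $R_e$ with $\C$ via $\alpha I+\beta J\leftrightarrow \alpha+\beta i$, the element $\bz$ becomes the complex conjugate of $z$, so $z\bz=|z|^2$ is a nonnegative real number, and the ``central'' factor $z^{2^k}$ is the complex $2^k$th power.

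For $s=2t$ even, the displayed formula gives $(zu^s)^{2^k}=z^{2^k}$, which depends only on $z$ and not on $\mu$. Hence the equations $(zu^{2t})^{2^k}=\pm I$ reduce to the corresponding equations $z^{2^k}=\pm 1$ in $\C$. Each has exactly $2^k$ solutions, so for every fixed $t$ one obtains finitely many homogeneous solutions, in both $(k]$ and $(-k]$. This accounts for all four ``finitely many'' families of even-exponent solutions that the lemma lists.

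For $s=2t+1$ odd, the displayed formula yields $(zu^s)^{2^k}=(z\bz)^{2^{k-1}}\mu=|z|^{2^k}\mu$, which is a nonnegative real multiple of $\mu$. The equation $(zu^s)^{2^k}=1$ therefore forces $|z|^{2^k}=\mu$, which is solvable exactly when $\mu=1$, producing the circle family $|z|=1$ in $(k]$ and no solutions in $(-k]$. Symmetrically, $(zu^s)^{2^k}=-1$ forces $|z|^{2^k}=-\mu$, giving the circle family $|z|=1$ in $(-k]$ and no odd-exponent solutions in $(k]$. This matches the asymmetry between $(k]$ and $(-k]$ claimed by the lemma. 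There is no real obstacle here; the only subtle point is to invoke the uniqueness of the decomposition $x=zu^s$ of a homogeneous element, and the positivity of $z\bz$, in order to conclude that a negative right-hand side cannot be achieved when $\mu$ has the ``wrong'' sign.
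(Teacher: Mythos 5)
Your proposal is correct and follows exactly the route the paper intends: the lemma is stated immediately after the displayed formulas for $(zu^s)^{2^k}$, and its proof is precisely the substitution you carry out, splitting on the parity of $s$ and using that $z\bz=|z|^2>0$ while $z^{2^k}$ ranges over complex $2^k$th powers. Note that where your conclusion disagrees with the printed statement --- you obtain $z^{2^k}=I$ for the even-exponent solutions of $x^{2^k}=1$ in $(-k]$, and $z^{2^k}=-I$ (not $z^{2^t}=-I$) in the last case --- your version is the correct one and the exponent $2^t$ and the sign in the lemma as printed are typographical errors.
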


\subsection{Nonequivalence}\label{ssNE_2}

In this section we will be dealing with the three types of algebras of Proposition \ref{p2list}. Since $2^k$ is always chosen to be the least possible degree of elements not commuting with $J$, we conclude that the number $k$ must be the same in two equivalent gradings. Now we are going to show that no algebra of the Type $(2_2)$ can be equivalent to an algebra of any of the Types $(1_2)$ or $(3_2)$. 

Suppose $R_1=(k]\ot S$, $R_2=(-k]\ot S'$ are of Types $(1_2)$, $(2_2)$, respectively. If $k\ge 2$ then the homogeneous solutions of $x^{2^k}=-I$ in each of these algebras are products of solutions of this equation in $(\pm k]$ and the finite number of solutions of $x^{2^k}=I$ in $S$ or $S'$. According to Lemma \ref{l2dim_calc}, the number of these solutions in $(k]$ is finite and in $(-k]$ infinite. Hence $R_1\not\sim R_2$.                                           

In the case $k=1$, we have factorizations 
\begin{displaymath}
R_1=(1]\ot [1,1]^{\ot s}\ot A
\end{displaymath}
\begin{displaymath}
R_2=(-1]\ot [1,1]^{\ot t}\ot B,
\end{displaymath}
where $A$ and $B$ have no homogeneous solutions of the equation $x^2=-1$. 

Once again, we recall the numbers $d_+^m$, (resp., $d_-^t$) of the elements in the support of the set of homogeneous solutions of
$x^2=1$ (resp., $x^2=-1$) in $[1,1]^{\ot m}$. We know from Lemma \ref{lpmu_1} that  $d_+^m > d_-^m$, for all $m$.  Let also $p$ and $q$ be the  numbers of elements in the supports of the sets of homogeneous solutions of $x^2=1$ in $A$ and $B$, respectively.  If $a_1$ and $b_1$ are the number of elements in the support of the set of homogeneous solutions of $x^2=-1$, $x^2=1$, respectively, in $R_1$, then
\begin{displaymath}
a_1=(d_+^s+2d_-^s)p,\quad b_1=(2d_+^s+d_-^s)p.
\end{displaymath}
This follows because $(1]$ is the graded division algebra $M_2^{(2)}$, see (\ref{eM2M4}); one immediately sees that the number of elements in the support of the set of homogeneous solutions of $x^2=-1$ is 1 and in the case of $x^2=1$ is 2. Similarly, in the case of $(-1]$, which is $\H^{(2)}$, the respective numbers are 2 for $x^2=-1$ and 1 for $x^2=1$. Then similar values $a_2,b_2$ for $R_2$ are
\begin{displaymath}
a_2=(2d_+^t+d_-^t)q,\quad b_2=(d_+^t+2d_-^t)q.
\end{displaymath}

If $R_1\sim R_2$ then $a_1=a_2, b_1=b_2$ and
\begin{displaymath}
\frac{p}{q}=\frac{d_+^t+2d_-^t}{2d_+^s+d_-^s}=\frac{2d_+^t+d_-^t}{d_+^s+2d_-^s},
\end{displaymath}
so that
\begin{equation}\label{equa2}
\frac{2d_+^t+d_-^t}{d_+^t+2d_-^t}=\frac{d_+^s+2d_-^s}{2d_+^s+d_-^s}.
\end{equation}
But, according to Lemma \ref{lpmu_1}, the left hand side of (\ref{equa2}) is greater than $1$ while the right hand side is less than $1$.

Now let $R_1$ be of the Type $(2_2)$ and $R_2$ of the Type $(3_2)$. Then
\begin{displaymath}
R_1=(-k]\ot [1,1]^{\ot t}\ot A,
\end{displaymath}
while $R_2$ is one of two forms
\begin{displaymath}
R_2'=(k]\ot [1,1]^{\ot s}\ot [-r,\ell] \ot B.
\end{displaymath}
or
\begin{displaymath}
R_2''=(k]\ot [1,1]^{\ot s}\ot [-r] \ot B.
\end{displaymath}
One more possible case would be
\begin{displaymath}
(k]\ot [1,1]^{\ot s}\ot [r,-\ell] \ot B,
\end{displaymath}
but this is quite similar to the first one and the reader might check this, as an exercise. 

In the above equations, $A$, $B$ have no homogeneous  solutions of $x^2=-1$. 

If $k\ge 2$, then the homogeneous solutions of $x^{2^k}=1$ in $R_1$ are products of the homogeneous solutions to this equation in $(-k]$ and $[1,1]^{\ot t}\ot A$. Using Lemma \ref{l2dim_calc}, we obtain that this number is finite. At the same time, the number of solutions of the same equation in $(k]$ is infinite. So it is infinite in each of the above forms of $R_2$, proving $R_1\not\sim R_2$.

In the case $k=1$, we denote by $d_\pm^t, d_\pm^s, p, q, a_1,b_1,a_2,b_2$ the same values as before. Denote also by $\widetilde d_+$
(resp., $\widetilde d_-$) the number of elements in the support of the set of homogeneous solutions of $x^2=1$ (resp., $x^2=-1$) in $[1,1]^{\ot s}\ot [-r,\ell]$. Then
\begin{displaymath}
\widetilde d_+ = 2d_+^s + 2d_-^s = \widetilde d_-
\end{displaymath}
and
\begin{displaymath}
a_1=(2d_+^t + d_-^t)p,\quad b_1=(d_+^t + 2d_-^t)p
\end{displaymath}
while
\begin{displaymath}
a_2=(2\widetilde d_+ +\widetilde d_-)q = (\widetilde d_+ +2 \widetilde d_-)q= b_2.
\end{displaymath}
But $a_1>b_1$ by (\ref{e_Clifford}), hence $R_1$ and $R_2'$ are not equivalent.

Now denote also by $\bd_+$
(resp., $\bd_-$) the number of elements in the support of the set of homogeneous solutions of $x^2=1$ (resp., $x^2=-1$) in $[1,1]^{\ot s}\ot [-r]$. Then
\begin{displaymath}
\bd_+ = d_+^s + d_-^s = \bd_-
\end{displaymath}
and
\begin{displaymath}
a_1=(2d_+^t + d_-^t)p,\quad b_1=(d_+^t + 2d_-^t)p
\end{displaymath}
while
\begin{displaymath}
a_2=(2\bd_+ +\bd_-)q = (\bd_+ +2 \bd_-)q= b_2.
\end{displaymath}
But $a_1>b_1$ by (\ref{e_Clifford}), hence $R_1$ and $R_2''$ are not equivalent.

Now we know that if $R=(\mu k]\ot S$, where $S$ is a graded division algebra with 1-dimensional graded components then the numbers $\mu$, $k$ as well as the type of $S$ are well-defined. It remains to show that the equivalence class of $S$ is well-defined, too.

Note that if there is an equivalence $\vp:R=P\ot S\to P\ot S'$, where $P$ is equivalent to $(\mu k]$, which is an identity map on $P$, then $S\sim S'$. Indeed, let $\{ J,b\}$ be the canonical generators of $P$; consider the centralizer $C$ of $b$ in $R$. This is a graded division algebra with 1-dimensional graded components, generated by central homogeneous element $b$ and $S$.  Applying our results about the equivalence of algebras with 1-dimensional graded components, we obtain $S\sim S'$. 

Assume $P=\alg\{J,b\}$,
$P'=\alg\{J,b'\}$. We know from the above, that the degree $n$ of $b'$ is the same as the degree of $b$ and the parity (that is, odd or even) must be the same, too.
Let $S=\alg\{u_1,v_1,u_2,v_2,\ld,u_s, v_s, w_1,w_2,\ld,w_r\}$, where
$u_i v_i=-v_iu_i$, for any $i=1,\ld,s$ while all the other generators commute. It follows from the proof of Theorem \ref{ttwodim} that in any homogeneous component there is a unique, up to a real multiple, element that commute with $b'$. Since
\begin{displaymath}
b'=zb^\delta u_1^{\alpha_1}v_1^{\beta_1}\cdots u_s^{\alpha_s}v_s^{\beta_s}w_1^{\gamma_1}\cdots w_r^{\gamma_r}
\end{displaymath}
where $z=p I+q J$, $p,q\in\R$, the element $u'_i$ from the same component as $u_i$ commuting with $b'$
is either equal to $u_i$ or to  $u_iJ$, depending on $\beta_i$. Similar statements holds for $v_i'$. Thus the commutation relations among new generators
$u_1', v_1',\ldots, u_s',v_s'$
 remain the same as among the old generators. At the same time, the power relation $u_i^{2^{k_i}}=\pm 1$ remains to be the same if $k_i>1$ but  can change to the negative if  $k_i=1$ (similarly, for $v_i'$). Since the degree of $b'$ is the same as that of $b$, it follows that if, say $\ell_i>n$ then $\beta_i$ must be even and so $u_i'=u_i$ satisfies the same power relation as $u_i$. Thus  the power relation can change only for the tensor factor $[\pm 1,\ell]$ where $\ell\le n$, $2^n=d(b)$. If $\ell_i\ge 2$ then applying item (\ref{eq10}) in Lemma \ref{leq_2} allows one to choose the generators with the same power relations.  
 
The parity of $v_i$ can change when we switch to $v_i'$ only if $\ell_i=1$. In this case, if we use the new generators $u_i'=u_iJ$ and $v_i'=v_iJ$ then we will replace $[\mu_i,\nu_i]$ by $[-\mu_i,-\nu_i]$. Note that thanks to our restrictions in Proposition \ref{p2list}, this could be only when $\mu_i=\nu_i=1$ (remember, $[-1,1]\sim[1,-1]\sim[1,1]$!). If $n\ge 2$, we still can use  Lemma \ref{leq_2} to return from $(\rho n][-1,-1]$ to $(\rho n][1,-1]\sim (\rho n][1,1]$. However, if $n=1$, Lemma \ref{leq_2} does not help. At the same time, $(b')^2=(bu_1v_1)^2=-\rho$!. As a reslut, there must be another $j\ne i$ such that the corresponding tensor factor is $[1,1]$ and $b'$ contains a factor of $u_jv_j$. In this case, after choosing $u_i'=u_iJ$, $v_i'=v_iJ$, $u_j'=u_jJ$, $v_j'=v_jJ$, with respect to these generators we will have $1,1][1,1]$ transforms to $[-1,-1][[-1,-1]$, which is equivalent to $[1,1][1,1]$.

Since, obviously, the generators $w_1,\ld,w_r$ can be left intact, our argument shows that we can choose basis generators $u_1'', v_1'',\ldots, u_s'',v_s'',w_1,\ldots,w_r$
satisfying the same relations as $u_1,v_1,\ldots,w_r$. As a result, the map
$\varphi$
\begin{displaymath}
b\to b', u_1\to u_1'', v_1\to v_1'',\ldots, w_r\to w_r''
\end{displaymath}
extends to a weak isomorpfism $R\to R$. Hence $\bS = \alg\{u_1'', v_1'',\ldots, u_s'',v_s'',w_1,\ldots,w_r\}$ is equivalent to $S$. Moreover, $R=P'\bar S$.

Finally, we have the following.

\begin{theorem}\label{teq_2_dim}
Every graded division algebra with noncentral 2-dimensional identity component is equivalent to exactly one of the following.

\begin{enumerate}
\item $\cE(2^k;1)\ot \cD(\chi)\ot\R G$, $\chi$ even and $G$ a finite abelian group;
\item $\cE(2^k;1)\ot \cD(\chi)\ot \cC(2^m;-1)\ot\R G$, $\chi$ even and $G$ a finite abelian group;
\item $\cE(2^k;-1)\ot \cD(\chi)\ot\R G$, $\chi$ even and $G$ a finite abelian group;
\item $\cE(2^k;1)\ot \cD(\chi)\ot\R G$, $\chi $ is odd, without quadruples of the form $(1,\ell;-1,1)$ with $\ell\le k$ and $G$ a finite abelian group;
\end{enumerate}
 
\end{theorem}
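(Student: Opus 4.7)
The plan is to combine Proposition \ref{p2list} with Theorem \ref{t_2_1_m} for the existence of the canonical forms, and then to extract the invariants $(\rho,k,\chi,m,G)$ one at a time, reusing the counting machinery already developed in Subsection \ref{ssNE_2}. Proposition \ref{p2list} writes $R\sim\cE(2^k;\pm 1)\otimes S$ with $S$ of Type~1, 2, or 3 in Proposition \ref{pmu_1}; applying Theorem \ref{t_2_1_m} to $S$ gives one of the three canonical shapes $\cD(\chi)\otimes\R G$, $\cC(2^m;-1)\otimes\cD(\chi)\otimes\R G$, or $\H^{(4)}\otimes\cD(\chi)\otimes\R G$. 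A $[-1,-1]=\H^{(4)}$ tensor factor is absorbed into the $\cE$-factor via items (11)--(12) of Lemma \ref{leq_2} (flipping the sign of $\cE$ when $k=1$), so we land in one of the four forms of the theorem. The restriction in case~(4) on the absence of $(1,\ell;-1,1)$-quadruples with $\ell\le k$ is precisely the residual condition left by item (10) of the same lemma, and the normalization of $\cC(2^m;-1)$ as a single even generator of maximal possible degree is the analogue of Lemma \ref{lce} used in the proof of Theorem \ref{tcca}.

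For the nonequivalence, the parameter $k$ is intrinsic by Theorem \ref{ttwodim}: $2^k$ is the minimum order of a homogeneous element not centralizing $R_e$. With $k$ fixed, Subsection \ref{ssNE_2} (together with Lemma \ref{l2dim_calc} for $k\ge 2$ and Lemma \ref{lpmu_1} together with the inequality (\ref{equa2}) for $k=1$) already separates the sign $\rho$ in $\cE(2^k;\rho)$ as well as the type of $S$. Concretely, case~(2) is singled out from (1), (3), (4) by the existence of a \emph{central} homogeneous solution to $x^{2^m}=-I$ for some $m\ge 2$; case~(3) is singled out from (1), (4) by the cardinality (finite vs.\ infinite, or the inequality $d_+^s\neq d_-^s$ when $k=1$) of the set of homogeneous roots of $x^{2^k}=-I$; and case~(4) is separated from (1) by counting, for suitable $\ell$, the pairs $(x,y)$ with $x^{2^k}=y^{2^\ell}=I$ and $xy=-yx$, exactly as in Subsection \ref{sss2}.

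Finally, once the type is fixed, the characteristic $\chi$, the exponent $m$, and the group $G$ must be recovered. The strategy is the one sketched at the end of Section \ref{sse2nc}: given an equivalence $\varphi\colon\cE(2^k;\rho)\otimes S\to\cE(2^k;\rho)\otimes S'$, the fact (from the proof of Theorem \ref{ttwodim}) that each homogeneous component of $R$ contains, up to a real scalar, a unique element commuting with a chosen canonical odd generator $b'$ of $\cE(2^k;\rho)$ yields new generators of $S'$ satisfying the same relations as the old generators of $S$, modulo a possible flip $[\mu_i,\nu_i]\mapsto[-\mu_i,-\nu_i]$ occurring only when $\ell_i=1$. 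Lemma \ref{leq_2}(10) undoes this flip whenever $\ell_i\ge 2$ or $k\ge 2$, while the remaining marginal case $k=\ell_i=1$ is handled by pairing two $[1,1]$-factors as in the closing lines of Section \ref{sse2nc}. The resulting weak isomorphism $S\to S'$ together with Theorem \ref{t_2_1_m} forces $\chi(S)=\chi(S')$, equality of the exponents $m$, and $G\cong G'$. The main obstacle will be precisely this last step, because the pairing trick requires an auxiliary $[1,1]$-factor and may fail when $\chi$ is poor in such factors; a careful inspection of the small-characteristic cases, via a direct dimension count and the residual freedom in choosing $b'$, is needed to close them.
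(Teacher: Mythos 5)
Your proposal is correct and follows essentially the same route as the paper: Proposition \ref{p2list} and Lemma \ref{leq_2} for the canonical forms, the solution-counting arguments of Subsection \ref{ssNE_2} (via Lemmas \ref{l2dim_calc} and \ref{lpmu_1}) to separate the four types, and the centralizer-of-$b'$ argument to recover the equivalence class of the 1-dimensional factor. The ``main obstacle'' you flag at the end --- the flip $[\mu_i,\nu_i]\mapsto[-\mu_i,-\nu_i]$ when $k=\ell_i=1$ and the need for an auxiliary $[1,1]$-factor --- is exactly the point the paper resolves in the closing lines of Section \ref{sse2nc}, where the relation $(b')^2=-\rho$ forces the existence of the second $[1,1]$-factor needed for the pairing trick.
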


\medskip

\section{Graded division algebras with 4-dimensional identity component}\label{s4c}

Let $R=\bigoplus_{g\in G}R_g$ be a graded division algebra such that $R_e\cong\H$. It follows from the Double Centralizer Theorem \cite[Theorem 4.7]{J} that $R\cong R_e\ot C$ where $C$ is the centralizer  of $R_e$ in $R$. For the completeness, we give a proof, emphasizing the grading on the algebra. 

For each $g\in G$, there is $u_g\in R_g$ such that $R_g=R_eu_g$. The map $x\to u_gxu_g^{-1}$ is an automorphism of $R_e\cong \H$, so that there is $q\in R_e$ such that $u_gxu_g^{-1}=qxq^{-1}$. If we denote by $C_g$ the $g$th homogeneous component of the centralizer $C$ of $R_e$ in $R$, then $q^{-1}u_g$ is a nonzero element in $C_g$.  Now if we assume that  already $u_g\in C_g$, suppose that also $v\in C_g$. Since $R_g=R_eu_g$, we have that $v=qu_g$, for some $q\in R_e$. Then, for any $x\in R_e$, we should have $x=vxv^{-1}=qu_gxu_g^{-1}q^{-1}=qxq^{-1}$. Hence $q$ is a quaternion commuting with all other quaternions, that is, $q=\lambda\cdot 1$, where $\lambda\in \R$. Thus $C_g=\R u_g$. As a result, we have a graded division subalgebra $C=\bigoplus_{g\in G}C_g$ with one-dimensional components whose elements commute with the elements of $R_e\cong\H$. Since $R_g=R_eC_g$ we have that the linear span of the image of the bilinear map $(q,c)\to qc$ from $R_e\times C$ to $R$ is the whole of $R$. Thus we have an isomorphism of vector spaces $R_e\ot C\to R$ given by $q\ot c\to qc$. Since $C$ and $R_e$ commute, we have $R\cong \R_e\ot C\cong \H\ot C$. Note that we have described graded division algebras with 1-dimensional homogeneous components in Section \ref{sOne_dim}.

If $R_1$ and $R_2$ are two graded division algebras with 4-dimensional components then $(R_i)_e\cong\H$ and $R_i\cong R_e\ot C_{R_i}((R_1)_e)$,  for $i=1,  2$. If we set $A_i=C_{R_i}(R_e)$ then $A_i$ is a graded division algebra with 1-dimensional components, for $i=1, 2$. Let $f: R_1\to R_2$ be a weak isomorphism of graded divison algebras. Then $f((R_1)_e)=(R_2)_e$ and
\begin{displaymath}
f(A_1)=f(C_{R_1}((R_1)_e))= C_{f(R_1)}(f((R_1)_e))=C_{R_2}((R_2)_e)=A_2.
\end{displaymath}
Hence $A_1\sim A_2$.

\begin{theorem}\label{tMain}
If $R=\bigoplus_{g\in G} R_g$ is a finite-dimensional real graded division  algebra with $\dim R_e=4$ then $\R$ is equivalent to exactly one of the following
\begin{enumerate}
\item $\H\ot \cD(\chi)\ot\R G$, where  $\chi $ can be even or odd and $G$ a finite abelian group;
\item $\H\ot\cC (2^m;-1)\ot \cD(\chi)\ot\R G$, $\chi $ even and $G$ a finite abelian group;
\item $\H\ot \H^{(4)}\ot\cD(\chi)\ot\R G$, $\chi $ even and  $G$ a finite abelian group; 
\end{enumerate}
\end{theorem}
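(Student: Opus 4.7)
The plan is to exploit the decomposition $R \cong \mathbb{H} \otimes C$ together with the observation that $C = C_R(R_e)$ is an invariant of $R$ under weak isomorphism, both of which are proved in the paragraphs immediately preceding the theorem. Since these two facts already reduce the classification problem to the classification of real graded division algebras with one-dimensional homogeneous components, the proof should be short: it amounts to quoting Theorems \ref{tcca} and \ref{t_2_1_m} and tensoring the output with $\mathbb{H}$.

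For existence, I would argue as follows. Because $\dim R_e = 4 = \dim\mathbb{H}$, the decomposition $R \cong \mathbb{H} \otimes C$ forces $\dim C_e = 1$, so $C$ is a graded division algebra with one-dimensional homogeneous components. If $C$ is commutative, Theorem \ref{tcca} gives $C \sim \mathbb{R}G$ or $C \sim \cC(2^m;-1)\otimes \mathbb{R}G$ (the $\mathbb{C}G$ case of that theorem is ruled out because it has two-dimensional components). If $C$ is noncommutative, Theorem \ref{t_2_1_m} expresses $C$ in one of three shapes: $\cD(\chi)\otimes \mathbb{R}G$, or $\cC(2^m;-1)\otimes \cD(\chi)\otimes \mathbb{R}G$ with $\chi$ even, or $\mathbb{H}^{(4)}\otimes\cD(\chi)\otimes \mathbb{R}G$ with $\chi$ even. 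Allowing the characteristic $\chi$ to be empty collapses the commutative cases into the first two shapes, and tensoring each shape with $\mathbb{H}$ produces exactly the three items in the statement.

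For uniqueness, suppose $R_1 \sim R_2$ are two algebras on the list, and write $R_i \cong \mathbb{H}\otimes C_i$. A weak isomorphism $f\colon R_1 \to R_2$ must carry $(R_1)_e$ to $(R_2)_e$ and hence the centralizer $C_1$ to $C_2$, so $C_1 \sim C_2$. The uniqueness assertions in Theorems \ref{tcca} and \ref{t_2_1_m} then pin down the shape of $C_i$, the parameter $m$, the presence of the $\mathbb{H}^{(4)}$ factor, the characteristic $\chi$, and the residual group $G$. These invariants of $C_i$ are in bijective correspondence with the data parametrizing the three items of Theorem \ref{tMain}, so the classification is free of redundancies.

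I do not expect a significant technical obstacle, since the real work has been done in the earlier sections. The only subtle point is the bookkeeping needed to verify that the five shapes produced by the combined application of Theorems \ref{tcca} and \ref{t_2_1_m} collapse to precisely three (and not four) cases after the substitution ``empty $\chi$ allowed,'' and that the even/odd distinction on $\chi$ transfers correctly from \ref{t_2_1_m} into items (1)--(3) of the theorem.
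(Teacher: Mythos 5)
Your proposal is correct and follows essentially the same route as the paper: Section \ref{s4c} establishes the graded decomposition $R\cong R_e\ot C$ with $C=C_R(R_e)$ having one-dimensional components, shows that a weak isomorphism $R_1\to R_2$ carries $C_{R_1}((R_1)_e)$ onto $C_{R_2}((R_2)_e)$ so that $C_1\sim C_2$, and then reads off the three items from Theorems \ref{tcca} and \ref{t_2_1_m} exactly as you describe.
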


\end{document}